\documentclass[1 [leqno,11pt]{amsart}
\usepackage{amssymb, amsmath,amsmath,latexsym,amssymb,amsfonts,amsbsy, amsthm}

\setlength{\oddsidemargin}{0mm}
\setlength{\evensidemargin}{0mm} \setlength{\topmargin}{-15mm}
\setlength{\textheight}{220mm} \setlength{\textwidth}{155mm}

\numberwithin{equation}{section}

\allowdisplaybreaks

\numberwithin{equation}{section}

\parindent=4mm

\def\sqr#1#2{{\vbox{\hrule height.#2pt
     \hbox{\vrule width.#2pt height#1pt \kern#1pt
           \vrule width.#2pt}
     \hrule height.#2pt}}}



\newtheorem{theorem}{Theorem}[section]

\newtheorem{proposition}[theorem]{Proposition}

\numberwithin{equation}{section}

\newcommand{\R}{\mathbb{R}}
\newcommand{\Us}{\mathbf{U}_s}
\newcommand{\U}{\mathbf{U}}
\newcommand{\W}{\mathbf{W}}
\newcommand{\UE}{U^\varepsilon}
\newcommand{\VE}{V^\varepsilon}

\newcommand{\e}{\varepsilon}
\newcommand{\T}{\mathcal{T}}
\newcommand{\X}{\mathbb{X}}
\newcommand{\Y}{\mathbb{Y}}
\newcommand{\Z}{\mathbb{Z}}

\newcommand{\z}{\langle}
\newcommand{\y}{\rangle}
\newcommand{\p}{\partial}
\newcommand{\se}{\sqrt\e}
\newcommand{\RE}{\mathbf{R}}
\newcommand{\F}{\mathbf{F}}
\newcommand{\ub}{\bar{u}}
\newcommand{\vb}{\bar{v}}

\vskip 0.2cm \arraycolsep1.5pt
\newtheorem{Lemma}{Lemma}[section]

\newtheorem{Proposition}{Proposition}[section]
\newtheorem{Remark}{Remark}[section]


\begin{document}

\title[On the Steady Prandtl Boundary Layer Expansions]
{On the Steady Prandtl Boundary Layer Expansions}

\author{Chen Gao}
\address{Institute of Mathematics, AMSS, UCAS, Beijing 100190, China
123}
\email{gaochen@amss.ac.cn}

\author{Liqun Zhang}
\address{Hua Loo-Keng Key Laboratory of Mathematics, Institute of Mathematics, AMSS, and School of
Mathematical Sciences, UCAS, Beijing 100190, China 
123}
\email{lqzhang@math.ac.cn}

\begin{abstract}
In this paper, we consider the zero-viscosity limit of the 2D steady Navier-Stokes equations in $(0,L)\times\R^+$ with non-slip boundary conditions. By estimating the stream-function of the remainder, we justify the validity of the Prandtl boundary layer expansions.
\end{abstract}
\date{}
\maketitle

\section{Introduction}\label{sec:intro}
We consider the vanishing viscosity limit of steady Navier-Stokes equations
\begin{equation}\label{NSE}
\left\{
\begin{aligned}
&U^\e U^\e_X+V^\e U^\e_Y-\e\Delta U^\e+P^\e_X=0,\\
&U^\e V^\e_X+V^\e V^\e_Y-\e\Delta V^\e+P^\e_Y=0,\\
&U^\e_X+V^\e_Y=0,\\
&U^\e|_{Y=0}=V^\e|_{Y=0}=0,
\end{aligned}
\right.
\end{equation}
in a two dimensional domain $\Omega=\{(X,Y):0\leqslant X\leqslant L,Y\geqslant0\}.$ A formal limit $\e\rightarrow 0$ should lead to the Euler flow $[U^0,V^0]$ inside $\Omega$:
\begin{equation}\label{Euler}
\left\{
\begin{aligned}
&U^0 U^0_X+V^0 U^0_Y+P^0_X=0,\\
&U^0 V^0_X+V^0 V^0_Y+P^0_Y=0,\\
&U^0_X+V^0_Y=0,\\
&V^0|_{Y=0}=0.
\end{aligned}
\right.
\end{equation}
Generically, there is a mismatch between the tangential velocities of the Euler flow $U_{0}(X,0)\neq0$ and the prescribed Navier-Stokes flows $\UE(X,0) = 0$ on the boundary, because of the difference of boundary conditions imposed on the two systems.

Due to the mismatch on the boundary, Prandtl in 1904, proposed a thin fluid
boundary layer of size $\se$ to connect different velocities
$U_{0}(X,0)$ and $0.$  In the following discussion, we shall make use of the scaled boundary layer, or
Prandtl's variables:
\begin{align}\label{var change}
x = X, \qquad y=\frac{Y}{\se}.
\end{align}

In these variables, we express the solution of the NS equation $[\UE,\VE]$ via $[u^\e,v^\e]$ as
\[
\begin{split}
[\UE(X,Y),\VE(X,Y)]=[u^\e(x,y),\se v^\e(x,y)]
\end{split}
\]
in which we note that the scaled normal velocity $v^\e$ is $\frac{1}{\se}$ of the original velocity $\VE$. Similarly, $P^\e(X,Y) = p^\e(x,y)$. In these new variables, the Navier-Stokes equations (\ref{NSE}) now read
\begin{equation}\label{P-NSE}
\left\{
\begin{aligned}
u^\e u^\e_x+v^\e u^\e_y+p^\e_x&=u^\e_{yy}+\e u^\e_{xx},\\
\e[u^\e v^\e_x+v^\e v^\e_y]+p^\e_y&=\e[v^\e_{yy}+\e v^\e_{xx}],\\
u^\e_x+v^\e_y&=0.
\end{aligned}
\right.
\end{equation}
Let $\e\rightarrow0$, it leads to the Prandtl equations:
\begin{equation}\label{Prandtl}
\left\{
\begin{aligned}
&u^0_pu^0_{px}+v^0_pu^0_{py}-u^0_{pyy}+p^0_{px}=0,\\
&p^0_{py}=0,\\
&u^0_{px}+v^0_{py}=0.\\
\end{aligned}
\right.
\end{equation}
Prandtl hypothesized that when viscosity $\e$ is  the Navier-Stokes flow can be approximately decomposed into two parts:
\begin{align}\label{p-exp}
\begin{aligned}
&\UE(X,Y)\approx u^0_e(X,Y)-u^0_e(X,0)+u^0_p(X,\frac{Y}{\se}),\\
&\VE(X,Y)\approx v^0_e(X,Y)+\se v^0_p(X,\frac{Y}{\se}),
\end{aligned}
\end{align}
in which $(u^0_e,v^0_e)$ denotes the Euler flow.

We attempt to verify the Prandtl boundary layer expansion (\ref{p-exp}) under more general conditions. Now let us review the main problems in boundary layer theory. Two important open problems in this area are the well-posedness of Prandtl equations and the justification of viscosity vanishing limits. The first problem is relatively well understood and proved the well-posedness in some cases. Sammartino and Caflisch \cite{Sam1} obtained their result for the analytic class. For the monotonic data, Oleinik and Samokhin \cite{Oleinik} obtained the local existence of classical solutions of 2D Prandtl equations by using the Crocco transformation. Xin and Zhang \cite{Zhang} proved the global existence of weak solutions to this system for the favorable pressure, which by their regularity result are classical solutions. Lately, Alexandre \cite{Alex} and Masmoudi and Wong \cite{Mas} independently proved by the energy methods, the local well-posedness of prandtl equations in Sobolev space under the monotonic assumptions. Meanswhile, Liu, Wang and Yang generalized some results in 3D case with special structure. There are some results of well-posedness in Gervey class \cite{DG} and \cite{GM}. On the hands, Gerard-Varet and Dormy \cite{Gerard-ill} established the linearized ill-posedness without the monotonicity condition in Sobolev space for Prandtl equations. There are some relevant results in \cite{E97} and \cite{GN12}.

For the second problem, the verification of the viscosity vanishing limits is more difficult and remains a challenge problem in general. The problem in the analytic case was proved in \cite{Sam2} and \cite{WWZ17}. The problem in 2D case was studied by Maekawa \cite{Mae} and proved the convergence under the assumption on the initial vorticity vanishing in the neighbourhood of boundary. The auther in \cite{GMM18} established the Gervey stability for shear flows. There are some results of instablity in Sobolev space, cf. \cite{GGN16}-\cite{GN18}. For the steady case, an important progress was made by Guo, Nguyen \cite{GN} and Iyer \cite{I17} for Prandtl boundary layer expansions for the steady Navier-Stokes flows over a moving boundary. Especially Guo and Iyer \cite{GI} proved very recently, the convergence result for no-slip boundary conditions in shear Euler flows in the case the length of the region $L$ is small. Meanwhile, Gerard-Varet and Maekawa \cite{Gerard pra} obtained stability of shear flows of Prandtl type in some class in Sobolev space . Those results are the great inspirations to us.

In our first result, we assume that the outside Euler flow $\lbrack U^{0},V^{0}]\equiv (u^0_e(X,Y),v^0_e(X,Y))$ satisfying the following hypothesis:
\begin{align}\label{Eul profile}
&0<c_0\leqslant u^0_e\leqslant C_0<\infty,\\\label{small}
&\|v^0_e\|_{L^\infty}\ll1,\\\label{Eul v}
&\|\z Y\y^k\nabla^m v^0_e\|_{L^\infty}<\infty \text{ for sufficiently large } k, m\geqslant0,\\\label{Eul u}
&\|\z Y\y^k\nabla^m u^0_e\|_{L^\infty}<\infty \text{ for sufficiently large } k, m\geqslant1.
\end{align}
Here $\z Y\y=Y+1$.

We consider the Prandtl equations with the positive data.
\begin{equation}\label{pra them}
\left\{
\begin{aligned}
&u^0_pu^0_{px}+v_p^0u^0_{py}-u^0_{pyy}+p^0_{px}=0,\hspace{3 mm}p^0_{py}=0,\hspace{3 mm}u_{px}^0+v_{py}^0=0,\hspace{3 mm}(x,y)\in(0,L)\times\mathbb{R}_+,\\
&u^0_p|_{x = 0} = U^0_P(y), \hspace{5 mm} u^0_p|_{y = 0} = v^0_p|_{y = 0} = 0, \hspace{5 mm} u^0_p|_{y \uparrow \infty} = u^0_e|_{Y = 0}.
\end{aligned}
\right.
\end{equation}
$U^0_P$ is a prescribing smooth function such that
\begin{align}
\begin{aligned} \label{pos}
& U^0_P > 0 \text{ for } y > 0, \hspace{3 mm} \partial_y U^0_P(0) > 0, \hspace{3 mm} \partial_y^2 U^0_P-u^0_eu^0_{ex}(0) \sim y^2 \text{ near } y = 0,\\
&\partial_y^m \{U^0_P - u^0_e(0)\} \text { decay fast for any }m\geqslant0.
\end{aligned}
\end{align}
In fact, under above conditions on $U^0_P$, equations(\ref{pra them}) admit a classical solution $[u^0_p,v^0_p]$. Now we state our first result:
\begin{theorem}\label{main1}  Assume the Euler flows $[u^0_e,v^0_e]$ satisfy (\ref{Eul profile})-(\ref{Eul u}), $U^0_P$ is a smooth function satisfying (\ref{pos}) and high order compatibility conditions, $L$ is a constant small enough,
\noindent then there exist $C(L), \e_0(L)>0$ depending on $L$, such that for $0<\e\leqslant\e_0$, equations (\ref{NSE}) admits a solution $[U^\e,V^\e]\in W^{2,2}(\Omega)$, satisfying:
\begin{align}
\begin{aligned}
&\|U^\e-u^0_e+u^0_e|_{Y=0}-u^0_p\|_{L^\infty}\leqslant C\se,\\
&\hspace{1cm}\|V^\e-v^0_e\|_{L^\infty}\leqslant C\se,
\end{aligned}
\end{align}
with the following boundary conditions:
\begin{equation}\label{Boundary C1}
\begin{aligned}
&[U^\e, V^\e]|_{Y=0}=0,\\
&[U^\e, V^\e]|_{X=0}=[u^0_e(0,Y)-u^0_e(0,0)+u^0_p(0,\frac{Y}{\se})+\se a_0, v^0_e(0,Y)+\se b_0],\\
&[U^\e, V^\e]_{X=L}=[u^0_e(L,Y)-u^0_e(L,0)+u^0_p(L,\frac{Y}{\se})+\se a_L, v^0_e(L,Y)+\se b_L].
\end{aligned}
\end{equation}
Here
\begin{equation}
\begin{aligned}
&a_0(Y)=u^1_e(0,Y)+u^1_b(0,\frac{Y}{\se})+\se u^2_e(0,Y)+\se \hat{u}^2_b(0,\frac{Y}{\se}),\\
&a_L(Y)=u^1_e(L,Y)+u^1_b(L,\frac{Y}{\se})+\se u^2_e(L,Y)+\se \hat{u}^2_b(L,\frac{Y}{\se}),\\
&b_0(Y)=v^0_b(0,\frac{Y}{\se})+v^1_e(0,Y)+\se v^1_b(0,\frac{Y}{\se})+\se v^2_e(0,Y)+\e \hat{v}^2_b(0,\frac{Y}{\se}),\\
&b_L(Y)= v^0_b(L,\frac{Y}{\se})+v^1_e(L,Y)+\se v^1_b(L,\frac{Y}{\se})+\se v^2_e(L,Y)+\e \hat{v}^2_b(L,\frac{Y}{\se}),\\
\end{aligned}
\end{equation}
are smooth functions constructed in Proposition \ref{construct}.
\end{theorem}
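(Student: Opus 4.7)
The overall strategy is the familiar asymptotic expansion plus remainder: build a sufficiently accurate multi-scale approximation $(u_{app}, v_{app}, p_{app})$ of the Navier--Stokes solution, write $U^\e = u_{app} + \e^{\gamma} u$, $V^\e = v_{app} + \e^{\gamma}\se v$, $P^\e = p_{app}+\e^{\gamma} p$ for an appropriate exponent $\gamma$, and reduce the theorem to an $\e$-uniform a priori estimate for the remainder $(u,v,p)$ that can be iterated in a fixed-point scheme. The smallness of $L$ is the mechanism that absorbs transport-type and nonlinear errors.

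The approximate solution produced by Proposition \ref{construct} is of the form
\begin{align*}
u_{app} &= u^0_e - u^0_e|_{Y=0} + u^0_p + \se(u^1_e + u^1_b) + \e(u^2_e + \hat{u}^2_b),\\
v_{app} &= v^0_e + \se v^0_b + \se v^1_e + \e v^1_b + \e v^2_e + \e^{3/2}\hat{v}^2_b,
\end{align*}
obtained by substituting the ansatz into \eqref{P-NSE}, collecting powers of $\se$, and solving at each order. The Euler correctors $[u^i_e,v^i_e]$ solve linearized Euler problems around $[u^0_e,v^0_e]$ with forcing produced by the traces of lower-order boundary-layer profiles at $Y=0$; the Prandtl correctors $[u^i_b,v^i_b]$ solve linearized Prandtl equations around $[u^0_p,v^0_p]$ with compatibility chosen so that the composite field satisfies the no-slip condition. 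Hypotheses \eqref{Eul profile}--\eqref{Eul u} and \eqref{pos} ensure that each profile exists with rapid $y$-decay and smooth $x$-dependence, so the residual of $(u_{app},v_{app})$ is $O(\e^{N})$ in a suitable weighted norm for $N$ sufficiently large.

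The remainder then satisfies a Navier--Stokes system linearized around $(u_{app},v_{app})$ with forcing of order $\e^{N-\gamma}$ and a quadratic nonlinearity of order $\e^{\gamma}$. Using the divergence-free condition, I introduce the stream function $\psi$ of the remainder with $u=\p_y\psi$, $v=-\p_x\psi$, and rewrite the problem as a fourth-order scalar equation for $\psi$ with homogeneous Dirichlet and Neumann conditions on $Y=0$ and decay at $Y=\infty$, plus inhomogeneous traces on $X=0,L$ coming from the corrector values. The key a priori estimate is obtained by testing the linearized vorticity equation against the multiplier $\psi/u_s$, where $u_s$ is the strictly positive shear part of $u_{app}$; positivity follows from \eqref{Eul profile} and the quadratic vanishing in \eqref{pos} combined with an Oleinik-type maximum principle. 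Integration by parts yields a coercive weighted $H^1$ form in $\psi$ plus error terms: those proportional to $\p_x u_s$ are absorbed by an $x$-Poincar\'e inequality with constant $O(L)$, the singularity of $1/u_s$ near $Y=0$ is handled by a Hardy inequality, and the nonlinear correction is absorbed using $\e^{\gamma}\ll 1$. Upgrading to $W^{2,2}$ by elliptic regularity and iterating in a contraction map produce the desired solution; the $L^\infty$ bounds in the theorem then follow by Sobolev embedding once $\gamma$ is chosen so that $\e^{\gamma}\lesssim\se$.

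The main obstacle will be closing the weighted estimate for $\psi$ uniformly in $\e$. The error terms generated by the spatial inhomogeneity of $u_s$, the mismatch between the Euler scale $Y$ and the Prandtl scale $y=Y/\se$, and the fact that one is working with the full Navier--Stokes vorticity operator rather than the Prandtl one must all be controlled by the principal coercive term without losing any power of $\e$. This is exactly where smallness of $L$ and the sharp tracking of $L$-dependence of every constant enter, and where the fine structure of the correctors built in Proposition \ref{construct}---in particular their decay in $y$ and the matching conditions that kill the leading traces on $Y=0$---is indispensable.
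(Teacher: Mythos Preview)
Your overall architecture matches the paper's: build the two-tier expansion of Proposition~\ref{construct}, pass to the stream function of the remainder, divide by the approximate tangential velocity, use a Hardy inequality near $Y=0$, close by contraction after a Stokes $W^{2,2}$ estimate. Two points, however, deserve correction.

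First, the multiplier you name is not quite enough. Testing the fourth-order equation for $\Phi$ against $G=\Phi/U_s$ alone (your ``$\psi/u_s$'') does \emph{not} produce a coercive first-order form: the Rayleigh piece $\partial_{XX}[U_s^2G_X]+\partial_{XY}[U_s^2G_Y]$ integrates to $O(\|U_sG_X\|^2+\|U_sG_Y\|^2)$ with no sign, and what you recover is only the bi-Laplacian contribution $\e\|\sqrt{U_s}\,\nabla^2 G\|^2$. The paper's device is to test in addition against $-G\omega$ with the weight $\omega=L-X$; the $X$-derivative landing on $\omega$ is what manufactures the positive terms $\tfrac32\|U_sG_X\|^2+\tfrac12\|U_sG_Y\|^2$, while every remaining error carries an explicit factor of $\omega\le L$ (or $\se$, or $\|v^0_e\|_\infty$). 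The two estimates are then played against each other (Lemmas~\ref{routine} and \ref{important1}). Your ``$x$-Poincar\'e with constant $O(L)$'' does not substitute for this: Poincar\'e would bound $\|G_X\|$ by $L\|G_{XX}\|$, but the latter carries an unfavorable $\e$ and the argument would not close uniformly in $\e$. So the weight $L-X$ is the missing ingredient in your plan, not a cosmetic detail.

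Second, with the expansion actually built in Proposition~\ref{construct} (two Euler and two Prandtl correctors), the residual is $O(\e^{3/2})$ in $L^2$, not $O(\e^N)$; there is no freedom to push $N$ large. Correspondingly the paper does not introduce an $\e^\gamma$ rescaling of the remainder: one works directly with $\mathbf U=\mathbf U^\e-\mathbf U_s$, proves $\|\mathbf U\|+\se\|\nabla\mathbf U\|\lesssim\|\mathbf F\|$ from Proposition~\ref{Prop}, upgrades via the Stokes estimate in a convex polygon, and closes the contraction in the norm $\|\mathbf U\|+\se\|\nabla\mathbf U\|+\e^{3/2}\|\nabla^2\mathbf U\|$ using the interpolation $\|\mathbf U\cdot\nabla\mathbf U\|\lesssim \e^{-9/8}\|\mathbf U\|_{\mathbb Z}^2$. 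The final $L^\infty$ bound then comes out as $\e^{7/8}\le\se$, which is exactly what the theorem asserts.
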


 For the second result, we consider $L$ is any given positive constant. We assume the Euler flow $\lbrack U^{0},V^{0}]\equiv \lbrack u^0_e(Y),0]$ is a shear flow, that is, it satisfies the following hypothesis:
\begin{align}
\begin{aligned}\label{shear Eul profile}
&0<c_0\leqslant u^0_e\leqslant C_0<\infty,\\
&\|\z Y\y^k\nabla^m u^0_e\|_{L^\infty}<\infty \text{ for sufficiently large } k, m\geqslant1.
\end{aligned}
\end{align}
Here $\z Y\y=Y+1$.

While we assume $[u^0_p,v^0_p]$ is a smooth solution of Prandtl equations (\ref{pra them}) satisfying the following hypothesis:
\begin{equation}\label{pra them2}
\begin{aligned}
&u^0_p > 0,\hspace{1mm}-u^0_{pyy}\geqslant0, \text{ for } y > 0,\\
&u^0_{py}>0, \text{ for } y \geqslant 0,\\
&\nabla^m \{u^0_p - u^0_e(0)\} \text { decay fast for any }m\geqslant0.
\end{aligned}
\end{equation}
Because the Euler flow here is independent on $x$, by the classical result of Oleinik \cite{Oleinik}, for any given $L>0$, this kind of solutions exist.
The important example is the famous Blasius's self-similar solution. 

Now we state our second result:
\begin{theorem}\label{main2}  Assume the Euler flows $[u^0_e,v^0_e]$ satisfy (\ref{shear Eul profile}), the Prandtl profiles $[u^0_p,v^0_p]$ satisfy (\ref{pra them2}), and $L>0$ is any given constant,
\noindent then there exist $C(L), \e_0(L)>0$ depending on $L$, such that for $0<\e\leqslant\e_0$, equations (\ref{NSE}) admits a solution $[U^\e,V^\e]\in W^{2,2}(\Omega)$, satisfying:
\begin{align}
\begin{aligned}
&\|U^\e-u^0_e+u^0_e|_{Y=0}-u^0_p\|_{L^\infty}\leqslant C\se,\\
&\hspace{1cm}\|V^\e\|_{L^\infty}\leqslant C\se,
\end{aligned}
\end{align}
with the boundary conditions:
\begin{equation}\label{Boundary C2}
\begin{aligned}
&[U^\e, V^\e]|_{Y=0}=0,\\
&[U^\e, V^\e]|_{X=0}=[u^0_e(Y)-u^0_e(0)+u^0_p(0,\frac{Y}{\se})+\se a_0, \se b_0],\\
&[U^\e, V^\e]_{X=L}=[u^0_e(Y)-u^0_e(0)+u^0_p(L,\frac{Y}{\se})+\se a_L, \se b_L].
\end{aligned}
\end{equation}
Here
\begin{equation}
\begin{aligned}
&a_0(Y)=u^1_e(0,Y)+u^1_b(0,\frac{Y}{\se})+\se u^2_e(0,Y)+\se \hat{u}^2_b(0,\frac{Y}{\se}),\\
&a_L(Y)=u^1_e(L,Y)+u^1_b(L,\frac{Y}{\se})+\se u^2_e(L,Y)+\se \hat{u}^2_b(L,\frac{Y}{\se}),\\
&b_0(Y)=v^0_b(0,\frac{Y}{\se})+v^1_e(0,Y)+\se v^1_b(0,\frac{Y}{\se})+\se v^2_e(0,Y)+\e \hat{v}^2_b(0,\frac{Y}{\se}),\\
&b_L(Y)= v^0_b(L,\frac{Y}{\se})+v^1_e(L,Y)+\se v^1_b(L,\frac{Y}{\se})+\se v^2_e(L,Y)+\e \hat{v}^2_b(L,\frac{Y}{\se}),\\
\end{aligned}
\end{equation}
are smooth functions constructed in Proposition \ref{construct}.
\end{theorem}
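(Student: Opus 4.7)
The plan is to reduce Theorem \ref{main2} to a remainder estimate in stream-function form, and then run a weighted energy argument whose coercivity stems from the Oleinik-type positivity of the Prandtl profile $u_p^0$, so that no smallness of $L$ is required.

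First I would invoke Proposition \ref{construct} to build an approximation $[U_s,V_s]$ whose leading term is $u_e^0(Y)-u_e^0(0)+u_p^0(x,y)$, corrected at orders $\se$ and $\e$ by Euler correctors $(u_e^j,v_e^j)$ and Prandtl boundary-layer correctors $(u_b^j,v_b^j)$ chosen recursively so that substituting $[U_s,V_s]$ into \eqref{NSE} produces a residual of size $O(\e^{N/2})$ for $N$ large. The shear hypothesis \eqref{shear Eul profile} makes the Euler correctors' equations constant-coefficient in $x$, so their construction proceeds straightforwardly on the half-plane. Writing $\UE=U_s+\e^\gamma u$ and $\VE=V_s+\e^\gamma v$ for a suitable $\gamma>\tfrac12$, the remainder $(u,v)$ satisfies a linear system around $[U_s,V_s]$ plus a nonlinear perturbation of relative size $\e^\gamma$ and a forcing $\F$ of size $\e^{N/2-\gamma}$. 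Introducing the stream function $u=\psi_y,\ v=-\psi_x$ (which is legitimate since $(u,v)$ is divergence-free with zero trace on $Y=0$), this collapses to a single fourth-order scalar equation for $\psi$ with Dirichlet data at $Y=0$ and prescribed data at $X=0,L$ inherited from \eqref{Boundary C2}.

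The core of the argument is a weighted energy estimate with $L$-independent constants. I would test the $\psi$-equation against a function adapted to the Prandtl structure, of the form $\psi/u_s^{\mathrm{app}}$ with $u_s^{\mathrm{app}}:=u_e^0(Y)-u_e^0(0)+u_p^0$, and exploit the positivity relations in \eqref{pra them2}: namely $u_s^{\mathrm{app}}\geqslant c_0>0$, $u_{s,y}^{\mathrm{app}}=u_{py}^0>0$ and $u_{s,yy}^{\mathrm{app}}=u_{pyy}^0\leqslant 0$. After a careful integration by parts of the transport terms, the concavity $u_{pyy}^0\leqslant 0$ produces a positive quadratic form in $|\nabla\psi|$, while the monotonicity $u_{py}^0>0$ lets one recover control of $\partial_x$-derivatives via a companion multiplier of the form $-v\cdot u_s^{\mathrm{app}}/u_{s,y}^{\mathrm{app}}$ in the Guo--Iyer spirit \cite{GI}. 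The resulting identity reads schematically
\begin{equation*}
\e\int\frac{|\Delta\psi|^2}{u_s^{\mathrm{app}}}+(\text{positive bulk terms})\lesssim\|\F\|_{L^2}^2+(\text{boundary terms at }X=0,L),
\end{equation*}
uniformly in $L$. Iterating once more for higher derivatives yields an $H^2$-style bound on $\psi$ and, by two-dimensional Sobolev embedding, an $L^\infty$ bound on $(u,v)$.

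Finally, with $\gamma>\tfrac12$ and $\e_0$ small enough, the nonlinear contribution is absorbed into the linear bound, and a contraction-mapping (or Leray--Schauder) argument in the corresponding weighted Sobolev space produces $(u,v)\in W^{2,2}$, yielding $[\UE,\VE]\in W^{2,2}(\Omega)$ together with the stated $L^\infty$ estimates. The main obstacle is precisely the $L$-independent coercivity in the energy step: since Gronwall accumulation in $x$ is unavailable for arbitrary $L$, the positive quadratic form coming from the Oleinik-type conditions $u_{py}^0>0$ and $-u_{pyy}^0\geqslant 0$ must be extracted cleanly through the commutator analysis above, and the boundary contributions at $X=0,L$ must be absorbed using the compatibility conditions designed into the prescribed correctors $a_0,a_L,b_0,b_L$.
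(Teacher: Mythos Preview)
Your overall architecture (construct $[U_s,V_s]$, pass to the stream function, test against $\Phi/U_s$, close by contraction) matches the paper. But two of the mechanisms you rely on for the $L$-independent estimate are not the ones that actually work, and one of your stated facts is simply false.

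First, $u_s^{\mathrm{app}}=u_e^0(Y)-u_e^0(0)+u_p^0$ is \emph{not} bounded below by a positive constant: it vanishes at $Y=0$ by the no-slip condition, and in fact $u_s^{\mathrm{app}}\sim Y/\se$ near the wall. This degeneracy is the central analytic difficulty. The paper handles it by a Hardy-type inequality (their Lemma~\ref{Hardy}) trading $\|G_Y\|^2$ against $\xi\e\|\sqrt{U_s}G_{YY}\|^2+\xi^{-2}\|U_sG_Y\|^2$, which you do not invoke. Without it you cannot control the commutator terms that appear after dividing by $u_s^{\mathrm{app}}$.

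Second, your account of where the $L$-independent coercivity comes from is off. In the paper the positive bulk terms $\tfrac32\|U_sG_X\|^2+\tfrac12\|U_sG_Y\|^2$ are produced by testing the $G$-equation against $-G\omega$ with the weight $\omega=L-X$ (so $\omega_X=-1$), via integration by parts of the Rayleigh terms $\p_{XX}[U_s^2G_X]$ and $\p_{XY}[U_s^2G_Y]$; this is Lemma~\ref{important2}. The concavity $-u^0_{pyy}\ge 0$ does \emph{not} by itself give a positive quadratic form: rather, after collecting all lower-order contributions one is left with coefficient combinations like $-\e U_{sYY}+\alpha(V_sU_{sY}-V_{sY}U_s)$ for $\alpha\in\{\tfrac12,\tfrac32\}$, and the point is that the Prandtl equation gives $-\e U_{sYY}+V_sU_{sY}-V_{sY}U_s=O(\se)$, so each combination equals $(1-\alpha)(-\e U_{sYY})+O(\se)\ge O(\se)$, where the last inequality \emph{does} use $-u^0_{pyy}\ge 0$. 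Similarly the zeroth-order combination $\e U_{sYYYY}-U_sU_{sXYY}-V_sU_{sYYY}+U_{sX}U_{sYY}+U_{sY}V_{sYY}$ equals $\p_{YY}$ of the Prandtl expression and is thus $O(\se)$ after multiplying by $Y^2$. This Prandtl-equation cancellation, not concavity alone, is what replaces the factor $L$ in the small-$L$ argument by $\se$. Your proposed companion multiplier $-v\cdot u_s^{\mathrm{app}}/u^{\mathrm{app}}_{s,y}$ is not used; the two multipliers in the paper are simply $-G$ (Lemma~\ref{routine}) and $-G\omega$ (Lemma~\ref{important2}), and the monotonicity $u^0_{py}>0$ enters only through positivity of the boundary term $\e\langle U_{sY}G_Y,G_Y\omega\rangle_{Y=0}$.

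Finally, the boundary contributions at $X=0,L$ do not need to be ``absorbed using compatibility of $a_0,a_L,b_0,b_L$'': they vanish outright because the remainder $[U,V]$ has homogeneous Dirichlet data, so $G=G_X=0$ at $X=0,L$.
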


The theorem shows if the expansions (\ref{p-exp}) are right on $\partial\Omega$, we can justify they are right in $\Omega$. The first result is different from \cite{GI}, because we actually  prescribe Dirichlet boundary conditions on the solution. While Guo and Iyer \cite{GI} posed the solution of Navier-Stokes equations with some Nuemann conditions. Moreover, we can prove the convergence in non-shear Euler flow case. The second one is not a local version, $L$ can be chosen for any large constant, which needs to overcome additional difficulties. In this situation, we can also deal with non-shear Euler case, when the Euler flow satisfies (\ref{Eul profile}), (\ref{Eul v}), (\ref{Eul u}), and
\begin{align}\label{small2}
&\|v^0_e\|_{L^\infty}+\|\nabla v^0_e\|_{L^\infty}\leqslant \delta(L),
\end{align}
where $\delta(L)$ is a small constant depending on $L$. The proof is similar to theorem \ref{main2}.

To prove the the theorems, we first construct the approximate solutions $\U_s=[U_s,V_s]$ of Navier-Stokes equations. The main difficulty is estimating the remainders $\U:=\U^\e-\U_s$. While $\U$ satisfies the following linearized Navier-Stokes equations:
\begin{align}
-\e\Delta \U+\Us\cdot\nabla\U+\U\cdot\nabla\Us+\nabla P=\F.
\end{align}
The method of \cite{GI} is by taking the partial derivatives of the vorticity equations respect to $x$, they find the Rayleigh term and bi-Laplacian terms enjoy good interaction properties. What we choose to estimate is the stream-function of $\U$ which has natural boundary conditions. We can also estimate the second derivatives of stream-function which can be dominating by $\F$, which essentially leads to the proof the theorem. The second results need more subtle calculations, under some monotonicity assumptions on the solution of Prandtl's equations, we obtain the similar estimates.

This paper is organized as follows: In Section 2, we construct the approximation solutions by the asymptotic expansion method. In Section 3, we estimate the stream-function of remainder. In Section 4, we prove the main theorems.

\section{Construction of the approximate solution}
To construct the approximate solutions, we follow the idea in \cite{GI}. We will need higher order approximations, as compared to (\ref{p-exp}), in order to control the remainders. Precisely, we search for approximate solutions of the Navier-Stokes equations in the following form:
\begin{equation}\label{proflie}
\begin{aligned}
U^\e(X,Y)\approx&u^0_e(X,Y)+u^0_b(X,\frac{Y}{\se})+\se[u^1_e(X,Y)+u^1_b(X,\frac{Y}{\se})]\\
         &+\e[u^2_e(X,Y)+u^2_b(X,\frac{Y}{\se})],\\
V^\e(X,Y)\approx&v^0_e(X,Y)+\se[v^0_b(X,\frac{Y}{\se})+v^1_e(X,Y)]+\e[v^1_b(X,\frac{Y}{\se})+v^2_e(X,Y)]\\
         &+\e^\frac{3}{2}v^2_b(X,\frac{Y}{\se}),\\
P^\e(X,Y)\approx&p^0_e(X,Y)+p^0_b(X,\frac{Y}{\se})+\se[p^1_e(X,Y)+p^1_b(X,\frac{Y}{\se})]\\
         &+\e[p^2_e(X,Y)+p^2_b(X,\frac{Y}{\se})]+\e^\frac{3}{2}p^3_b(X,\frac{Y}{\se}),
\end{aligned}
\end{equation}
in which $[u^j_e,v^j_e,p^j_e]$ and $[u^j_b,v^j_b,p^j_b]$, with $j=0,1,2,$ denoting the Euler profiles and boundary layer profiles, respectively. Here, we note that these profile solutions also depend on $\e$. And the Euler flows are always evaluated at $(X,Y)$, whereas the boundary layer profiles are at $(X,\frac{Y}{\se})$.

For convenience, we will introduce some notation here. we write $$\z\cdot,\cdot\y=\z\cdot,\cdot\y_{L^2_{X,Y}}, $$ $$\z\cdot,\cdot\y_{Y=0}=\z\cdot,\cdot\y_{L^2_X(Y=0)},$$ $$\|\cdot\|=\|\cdot\|_{L^2_{X,Y}}$$ and $$\|\cdot\|_{\infty}=\|\cdot\|_{L^\infty_{X,Y}}=\|\cdot\|_{L^\infty_{x,y}}.$$ We denote $a\lesssim b$ means there exist a positive constant $C_0$, s.t. $a\leqslant C_0b$, here $C_0$ is independent on $\se$ and $L$ when $L\leqslant 1$, and $C_0$ is independent on $\se$ but dependent on $L$ when $L\geqslant1$. The notation $a\lesssim_L b$ means that there exist a positive constant $C_0(L)$, such that $a\leqslant C_0(L)b$, here $C_0(L)$ is independent on $\se$ but dependent on $L$. And we let $a=O\big(b\big)$ denote $|a|\lesssim b$.

\subsection{The zeroth-order profiles}
Recall the Euler flow $[u^0_e,v^0_e]$. Let $\psi$ be the stream-function of $[u^0_e,v^0_e]$
\begin{align}
\psi(X,Y):=\int_0^Yu^0_e(X,Y')dY', \hspace{3 mm} \psi_Y=u^0_e, \hspace{3 mm} \psi_X=-v^0_e,
\end{align}
then Euler equations (\ref{Euler}) are equivalent to:
\begin{align}
\Delta\psi=F_e(\psi).
\end{align}
From the assumptions in (\ref{Eul profile})-(\ref{Eul u}), we can know that $F_e$ together with sufficiently many derivatives are bounded and decaying in its argument.

While for Prandtl equations, there is a famous result due to Oleinik \cite{Oleinik}:
\begin{Proposition}[Oleinik]   \label{Oleinik} Assume boundary data  $U^0_P \in C^\infty$ satisfies (\ref{pos}), then for some $L > 0$, equations (\ref{pra them}) exists a solution $[u^0_p, v^0_p]$, satisfying, for some $y_0, m_0 > 0$,
\begin{equation}\label{OL}
\begin{aligned}
&\sup_{(0,L)\times(0,\infty)} |u^0_p, \p_y u^0_p, \p_{yy}u^0_p, \p_x u^0_p| \lesssim 1, \\
&\inf_{(0,L)\times (0, y_0)} \p_y u^0_p > m_0 > 0.
\end{aligned}
\end{equation}
\end{Proposition}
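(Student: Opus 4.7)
The plan is to follow Oleinik's classical approach via the Crocco transformation, which reduces the Prandtl system to a single quasilinear degenerate parabolic equation that can be solved by standard methods.

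First I would introduce the Crocco variables $\tau = x$ and $\eta = u^0_p(x,y)/u^0_e(x,0)$, and set $w(\tau,\eta) = \p_y u^0_p(x,y)$. The positivity hypothesis (\ref{pos}) — namely $U^0_P > 0$ for $y > 0$ together with $\p_y U^0_P(0) > 0$ — guarantees that, at least for short time in $x$, the map $y \mapsto u^0_p(x,y)/u^0_e(x,0)$ is a bijection from $[0,\infty)$ onto $[0,1)$, so these variables are well defined. Using the Prandtl equations (\ref{pra them}) and the relation $\p_x p^0_p = -u^0_e u^0_{ex}\big|_{Y=0}$ obtained from Bernoulli's law on the boundary, a direct computation converts the system into the scalar equation
\begin{equation*}
u^0_e \eta\, w_\tau - w^2 w_{\eta\eta} + A(\tau,\eta)\, w_\eta + B(\tau,\eta)\, w = 0,\qquad (\tau,\eta)\in(0,L)\times(0,1),
\end{equation*}
with boundary data $w(0,\eta)=W_0(\eta)$ determined from $U^0_P$, $w(\tau,1)=0$, and a Robin-type condition at $\eta=0$ coming from the no-slip relation $w\, w_\eta\big|_{\eta=0} = u^0_e u^0_{ex}\big|_{Y=0}$. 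The condition $\p_y^2 U^0_P - u^0_e u^0_{ex}(0)\sim y^2$ near $y=0$ is exactly the compatibility that makes $W_0$ smooth up to $\eta=0$.

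Next I would solve this degenerate quasilinear parabolic equation by a vanishing-viscosity/approximation scheme: regularize by replacing $w^2$ with $w^2+\delta$ and cutting off near $\eta=1$, apply standard quasilinear parabolic theory on the truncated domain to get $w^\delta$, and use a maximum principle argument to obtain two-sided bounds $0< c \leqslant w^\delta \leqslant C$ on strips $\{0 \leqslant \eta \leqslant 1-\kappa\}$ provided $L$ is chosen small. The lower bound on $w^\delta$ near $\eta=0$ follows from the Robin condition and a Hopf-type argument; the upper bound on $|w_\eta|, |w_{\eta\eta}|$ comes from differentiating the equation and bounding the resulting linear parabolic equations with the help of barrier functions. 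Passing to the limit $\delta\to 0$ yields a classical solution $w$ on $(0,L)\times(0,1)$.

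Finally I would transform back: defining $u^0_p$ implicitly by $\int_0^{u^0_p} d\eta'/w(\tau,\eta') = y\, u^0_e(\tau,0)^{-1}$ inverts Crocco, and $v^0_p$ is recovered from the divergence-free condition. The bounds in (\ref{OL}) on $u^0_p, \p_y u^0_p, \p_{yy}u^0_p$ follow directly from the bounds on $w, w_\eta, w w_{\eta\eta}$, while the $\p_x u^0_p$ bound comes from $\p_x u^0_p = \eta u^0_{ex}(\tau,0) + w\, \eta_\tau$ combined with the equation. The non-degeneracy $\inf_{(0,L)\times(0,y_0)} \p_y u^0_p > m_0$ is exactly the lower bound on $w$ near $\eta=0$ translated back. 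The main obstacle is propagating the strict positivity $w>0$ uniformly in $\delta$ and in $x\in(0,L)$: this is precisely why $L$ must be chosen small in the general non-shear case, since without a favorable pressure gradient the monotonicity of $u^0_p$ can be lost after a finite $x$-distance, and is the source of the restriction on the length of the interval in the proposition.
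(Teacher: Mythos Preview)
The paper does not supply its own proof of this proposition: it is stated as ``a famous result due to Oleinik \cite{Oleinik}'' and simply cited. Your sketch via the Crocco transformation, reduction to a scalar degenerate quasilinear parabolic equation for $w=\p_y u^0_p$, maximum-principle/barrier bounds on a regularized problem, and back-transformation, is precisely the classical argument in Oleinik's book that the paper is invoking. So your proposal is consistent with the paper's (absent) proof --- you have reconstructed the cited reference rather than something the authors themselves prove.
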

In fact, if $U^0_P$ satisfies high order parabolic compatibility conditions at the corner $(0,0)$, then $[u^0_p,v^0_p]$ are smooth enough. Next, let us consider the parabolic compatibility conditions of Prandtl's equations (\ref{pra them}). By the idea of \cite{GI-H},
\[
\begin{split}
&u^0_pu^0_{px}+v_p^0u^0_{py}-u^0_{pyy}+p^0_{px}=0,\hspace{3 mm}p^0_{py}=0,\hspace{3 mm}u_{px}^0+v_{py}^0=0,\hspace{3 mm}(x,y)\in(0,L)\times\mathbb{R}_+,\\
&u^0_p|_{x = 0} = U^0_P(y), \hspace{5 mm} u^0_p|_{y = 0} = v^0_p|_{y = 0} = 0, \hspace{5 mm} u^0_p|_{y \uparrow \infty} = u^0_e|_{Y = 0}.
\end{split}
\]
Due to $u^0_p|_{y = 0} = v^0_p|_{y = 0} = 0$ and $u_{px}^0+v_{py}^0=0$, $v(x,y)\sim y^2 \text{ near } y = 0$. By Bernoulli's law, $p^0_{px}(x,y)+u^0_e(x,0)u^0_{ex}(x,0)=0$, we can evaluate the Prandtl's equations on $y=0$, then
$$\partial_y^2 U^0_P(y)-u^0_e(0,0)u^0_{ex}(0,0) \sim y^2 \text{ near } y = 0.$$
Take partial derivative of the Prandtl's equations respect to $x$:
\begin{equation}\label{partial Pra}
\begin{aligned}
&u^0_pu^0_{pxx}+u^0_{px}u^0_{px}+v^0_{px}u^0_{py}+v_p^0u^0_{pxy}+v^0_{pyyy}+p^0_{pxx}=0.
\end{aligned}
\end{equation}
By evaluating (\ref{partial Pra}), we have
\begin{align}\label{compati}
\begin{aligned}
v^0_{pyyy}(0,y)-u^0_e(x,0)u^0_{exx}(0,0)-u^0_{ex}(0,0)u^0_{ex}(0,0) \sim y^2 \text{ near } y = 0.
\end{aligned}
\end{align}
We can solve the $v^0_p(0,y)$ from $U^0_P(Y)$ by Prandtl's equation:
\[
\begin{split}
&-u^0_pv^0_{py}+v^0_pu^0_{py}=u^0_{pyy}-p^0_{px},\\
\end{split}
\]
then
\begin{align}\label{pra v bry}
v^0_p(0,y)=-U^0_P(y)\int_0^y\frac{\p^2_yU^0_P-p^0_{px}(0,0)}{(U^0_P)^2}dy'.
\end{align}
The above integral is well-defined, because $\p_yU^0_P(0)>0$. So we obtain the compatibility conditions on $U^0_P$ from (\ref{compati}) and (\ref{pra v bry}):
\begin{align}\label{compati h}
\begin{aligned}
&\p^5_yU^0_P|_{y=0}-2u^0_{ex}(0,0)u^0_{ex}(0,0)\p_yU^0_P|_{y=0}=0,\\
&[\p_yU^0_P\p^6_yU^0_P-\p^2_yU^0_P\p^5_yU^0_P+2\p^3_yU^0_P\p^4_yU^0_P]|_{y=0}=0.
\end{aligned}
\end{align}
By continuing to take partial derivative with respect to $x$, we can obtain high order parabolic compatibility conditions on $U^0_P$.

Following the proof of Oleinik in \cite{Oleinik}, we have:
\begin{Lemma}\label{pra0}
If $U^0_P \in C^\infty$ satisfies (\ref{pos}) and high order parabolic compatibility conditions, then
\begin{align}
\|\z y\y^M\nabla^k(u^0_p(x,y)-u^0_e(x,0))\|_{\infty}\lesssim1, \hspace{2mm}\text{  for  }\hspace{2mm} 0\leqslant k \leqslant K,
\end{align}
here $K$ and $M$ are constants.
\end{Lemma}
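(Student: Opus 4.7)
The plan is to reformulate the statement in terms of $w := u^0_p - u^0_e(x,0)$. Subtracting the Bernoulli identity $p^0_{px} = -u^0_e(x,0)u^0_{ex}(x,0)$ from the Prandtl momentum equation and substituting $u^0_{px} = w_x + u^0_{ex}(x,0)$, one obtains
\begin{equation*}
u^0_p w_x + v^0_p w_y - w_{yy} + u^0_{ex}(x,0)\, w = 0,
\end{equation*}
with $w|_{y=0} = -u^0_e(x,0)$ (bounded), $w|_{x=0} = U^0_P(y) - u^0_e(0,0)$ (decaying faster than any polynomial by (\ref{pos})), and $w \to 0$ as $y \to \infty$. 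The desired smoothness of $\nabla^k w$ up to $k=K$ follows from Oleinik's theorem (Proposition~\ref{Oleinik}) combined with the high-order parabolic compatibility conditions at the corner $(0,0)$ obtained by iterating the derivation of (\ref{partial Pra})--(\ref{compati h}); these compatibility relations make each differentiated system consistent, and classical Schauder theory then produces $\nabla^k w \in L^\infty$.

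For the polynomial weight, I would pass to the Crocco variables $(\tau,\eta) = (x, u^0_p/u^0_e(x,0))$, under which the Prandtl equation becomes a uniformly parabolic equation for $W := u^0_{py}/u^0_e(x,0)$ on $(0,L)\times(0,1)$. The monotonicity $u^0_{py} > 0$ near $y=0$ from (\ref{OL}), together with a standard maximum-principle propagation, ensures the change of variables is a diffeomorphism. The fast decay of $U^0_P - u^0_e(0,0)$ at $y = \infty$ translates into a high-order Taylor expansion of $W|_{\tau=0}$ at $\eta = 1$, and standard parabolic regularity propagates this Taylor expansion from $\tau = 0$ into the interior. Unwinding the change of variables then converts the Taylor expansion at $\eta = 1$ back into the weighted $\langle y\rangle^{-M}$ decay of $w$ and all its derivatives through order $K$.

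For $k\geq 1$, an induction on $k$ finishes: each $\partial^\alpha w$ solves a linear parabolic equation whose forcing is built from lower-order derivatives of $u^0_p$, $v^0_p$ and of $u^0_e(\cdot,0)$, controlled by the induction hypothesis together with the weighted assumptions (\ref{Eul u}) on the Euler profile. The main obstacle is that a direct barrier-function argument in the original $(x,y)$ variables fails near $y=0$: the natural super-solution $A e^{\lambda x}\langle y\rangle^{-M}$ is convex in $y$, so the $-\partial_y^2$ term spoils the super-solution inequality at the boundary precisely where the principal coefficient $u^0_p$ degenerates to zero. Passing to the Crocco variables bypasses this issue by turning the degenerate boundary into a regular one at $\eta = 1$, where the prescribed fast decay of the initial data is already encoded in the Taylor expansion and propagates uniformly.
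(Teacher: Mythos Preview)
The paper offers no proof of this lemma beyond the sentence ``Following the proof of Oleinik in \cite{Oleinik}, we have:'', so there is no detailed argument to compare yours against. Your reconstruction---deriving an equation for $w=u^0_p-u^0_e(x,0)$, invoking Oleinik's regularity together with the compatibility conditions for smoothness, and then turning to Crocco variables for the weighted decay---is in the spirit of Oleinik's methods, and the Crocco transformation is exactly the tool the paper cites for the adjacent Proposition~\ref{pra02}.

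One point needs care. You claim that ``the monotonicity $u^0_{py}>0$ near $y=0$ from (\ref{OL}), together with a standard maximum-principle propagation, ensures the change of variables is a diffeomorphism.'' But the Crocco map $(x,y)\mapsto(x,u^0_p/u^0_e(x,0))$ is a diffeomorphism only if $u^0_{py}>0$ for \emph{all} $y\geq 0$; (\ref{OL}) asserts this only on $(0,y_0)$, and (\ref{pos}) does not assume $\partial_y U^0_P>0$ globally, so there is nothing at $x=0$ for a maximum principle to propagate. Either one reads global monotonicity as implicit in Oleinik's local theory (as the paper evidently does), or one bypasses Crocco.

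In fact you do not need Crocco for the weighted decay. The obstruction you identify for the barrier $Ae^{\lambda x}\langle y\rangle^{-M}$ lives only near $y=0$, whereas the decay is a large-$y$ statement. On $\{y\geq y_1\}$ with $y_1$ fixed, $u^0_p$ is bounded below by a positive constant (since $u^0_p\to u^0_e(x,0)\geq c_0$), so your equation for $w$ is uniformly parabolic there; the at-most-linear growth of $v^0_p$ is exactly compensated by the factor $\langle y\rangle^{-1}$ arising from $\partial_y$ of the barrier, and choosing $\lambda$ large makes $Ae^{\lambda x}\langle y\rangle^{-M}$ a supersolution on $\{y\geq y_1\}$. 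The trace at $y=y_1$ is controlled by the plain $L^\infty$ bound from Proposition~\ref{Oleinik}, and the initial trace at $x=0$ by the fast decay in (\ref{pos}). This sidesteps the degenerate boundary entirely and is closer to how such estimates are carried out in \cite{Oleinik}.
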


By using the Crocco transformation in \cite{Oleinik}, we have 
\begin{Proposition}\label{pra02}
Assume $u^0_e|_{Y=0}$ is indenpendent on $x$, $U^0_P$ is smooth function satisfying (\ref{pos}), and
\[
\begin{split}
M_1\big(1-\frac{U^0_P}{u^0_e(0)}\big)\sqrt{-\ln\mu\big(1-\frac{U^0_P}{u^0_e(0)}\big)}\leqslant\frac{\p_y U^0_{P}}{u^0_e(0)}\leqslant M_2\big(1-\frac{U^0_P}{u^0_e(0)}\big)\sqrt{-\ln\mu\big(1-\frac{U^0_P}{u^0_e(0)}\big)},
\end{split}
\]
\[
\begin{split}
-M_3\sqrt{-\ln\mu\big(1-\frac{U^0_P}{u^0_e(0)}\big)}\leqslant\frac{\p^2_y U^0_{P}}{\p_y U^0_{P}}\leqslant-M_4\sqrt{-\ln\mu\big(1-\frac{U^0_P}{u^0_e(0)}\big)},
\end{split}
\]
\[
\begin{split}
|\frac{\p^3_y U^0_P\p_y U^0_P-(\p^2_y U^0_P)^2}{(\p_y U^0_P)^2}|\leqslant M_5,
\end{split}
\]
where $\mu$, $M_i$ are positive constants and $0<\mu<1$, moreover, $U^0_P$ satisfies the high order parabolic compatibility conditions and the high order derivatives of $U^0_P$ are decaying fast enough, then for any $L>0$, equations (\ref{pra them}) admits a smooth solution $[u^0_p,v^0_p]$ satisfying (\ref{pra them2}), and
\begin{align}
\|\z y\y^M\nabla^k(u^0_p(x,y)-u^0_e(0))\|_{\infty}\lesssim1, \hspace{2mm}\text{  for  }\hspace{2mm} 0\leqslant k \leqslant K,
\end{align}
here $K$ and $M$ are large constants.
\end{Proposition}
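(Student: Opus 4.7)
The strategy is to reduce (\ref{pra them}) under the shear-Euler hypothesis to a single degenerate nonlinear parabolic equation via the Crocco transformation and then invoke Oleinik's global-in-$x$ theory \cite{Oleinik}. Since $u^0_e|_{Y=0}$ is independent of $x$, Bernoulli's law gives $p^0_{px}\equiv 0$, and evaluating (\ref{pra them}) at $y=0$ forces $u^0_{pyy}(x,0)=0$. Introduce the Crocco variables $\tau=x$, $\eta=u^0_p(x,y)/u^0_e(0)\in(0,1)$, and set $w(\tau,\eta):=u^0_{py}(x,y(x,\eta))/u^0_e(0)$. A standard calculation (Chapter~2 of \cite{Oleinik}) using $u^0_{px}+v^0_{py}=0$ yields the Crocco equation
\[
u^0_e(0)\,\eta\, w_\tau \;=\; w^2\, w_{\eta\eta}, \qquad (\tau,\eta)\in(0,L)\times(0,1),
\]
with Dirichlet condition $w(\tau,1)=0$ (from $u^0_p\to u^0_e(0)$ as $y\to\infty$), Neumann condition $w_\eta(\tau,0)=0$ (from $u^0_{pyy}(x,0)=0$), and initial data $w(0,\eta)=W_0(\eta):=\partial_y U^0_P(y)/u^0_e(0)$, where $y=y(\eta)$ satisfies $U^0_P(y)=\eta\, u^0_e(0)$. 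Using the identities $(W_0)_\eta=\partial_y^2 U^0_P/\partial_y U^0_P$ and $W_0\,(W_0)_{\eta\eta}=\partial_y[\partial_y^2 U^0_P/\partial_y U^0_P]$, the three structural hypotheses translate into two-sided log-type bounds for $W_0$ and $(W_0)_\eta$ together with an $L^\infty$ bound for $W_0\,(W_0)_{\eta\eta}$, all tuned to the natural scales of the Crocco equation near the endpoint $\eta=1$.

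The core step is to construct sub- and super-solutions of the form $c_\pm(\tau)\,(1-\eta)\sqrt{-\ln\mu_\pm(1-\eta)}$ that sandwich $w(\tau,\cdot)$ for every $\tau\in[0,L]$. These barriers are precisely tuned to the degeneracy $w^2\sim(1-\eta)^2(-\ln(1-\eta))$ of the diffusion coefficient near $\eta=1$, and the three pointwise hypotheses on $U^0_P$ are exactly what is needed for the barriers to hold at $\tau=0$ and to propagate forward in $\tau$. Coupling the barriers with a mollification and continuation scheme as in Oleinik's book produces a smooth solution on $(0,L)\times(0,1)$ for every $L>0$. A maximum-principle argument then gives $w>0$ in the interior, i.e., $u^0_{py}>0$; and a comparison argument for $w_\eta$, based on $(W_0)_\eta\le 0$ and the boundary datum $w_\eta(\tau,0)=0$, yields $w_\eta\le 0$, whence $u^0_{pyy}=u^0_e(0)\, w\, w_\eta\le 0$, verifying (\ref{pra them2}).

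The decay estimate is read off by inverting the change of variables: the log-type lower bound on $w$ gives $y(\tau,\eta)=\int_0^\eta d\eta'/w(\tau,\eta')\sim\sqrt{-\ln(1-\eta)}$ as $\eta\uparrow 1$, so $1-u^0_p/u^0_e(0)\lesssim e^{-cy^2}$ and $u^0_p-u^0_e(0)$ decays faster than any polynomial in $y$. Higher $y$-derivatives of $u^0_p$ are polynomial expressions in $w, w_\eta, \ldots$, and inherit the same Gaussian-type decay once analogous log-type bounds on $(W_0)_\eta,(W_0)_{\eta\eta},\ldots$ are propagated; this step is allowed by the high-order parabolic compatibility conditions and the assumed fast decay of $\partial_y^m U^0_P$. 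The main obstacle I anticipate is the sharp uniform-in-$\tau$ preservation of the two-sided log-type barrier near $\eta=1$ for arbitrary $L>0$: the diffusion coefficient $w^2$ degenerates there, so a slightly too generous upper barrier loses decay while a slightly too tight lower barrier fails to be preserved by the flow, and all three structural hypotheses of the proposition must therefore be exploited simultaneously in a quantitatively precise way for the barrier argument to close.
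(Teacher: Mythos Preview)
Your proposal is correct and follows essentially the same route as the paper, which does not supply a proof but simply prefaces the proposition with ``By using the Crocco transformation in \cite{Oleinik}'' and relies on Oleinik's global-in-$x$ theory. Your sketch fleshes out that citation: the reduction to the Crocco equation, the translation of the three structural hypotheses into two-sided log-type bounds on $W_0$, $(W_0)_\eta$, $W_0(W_0)_{\eta\eta}$, the barrier argument near $\eta=1$, and the inversion $y\sim\sqrt{-\ln(1-\eta)}$ giving Gaussian-type decay are exactly the ingredients Oleinik's approach requires, so nothing further is needed.
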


Notice that  $$u^0_p(x,y)-u^0_e(0)\sim exp(-\alpha y^2),$$ and the Blasius's self-similar solution is in this class. We can also deal with the case that $u^0_p(x,y)-u^0_e(0)\sim exp(-\alpha y)$.

After we solved Prandtl's equation (\ref{Prandtl}), we let $$u^0_b(x,y)=u^0_p(x,y)-u^0_e(x,0),$$ $$v^0_b(x,y)=\int_y^\infty u^0_{bx}(x,y')dy'$$ and $p^0_b=0.$

\subsection{The high-order profiles}
By the method of asymptotic matching expansions, we can deduce the equations of $[u^j_e,v^j_e]$ and $[u^j_b,v^j_b]$, $j=1,2.$
The first order Euler profile $[u^1_e,v^1_e,p^1_e]$ solves the linearized Euler equations around $[u^0_e,v^0_e]$:
\begin{align} \label{euler1}
\left\{
\begin{aligned}
&u^0_eu^1_{eX}+u^0_{eX}u^1_e+v^0_eu^1_{eY}+u^0_{eY}v^1_e+p^1_{eX}=0,\\
&u^0_ev^1_{eX}+v^0_{eX}u^1_e+v^0_ev^1_{eY}+v^0_{eY}v^1_e+p^1_{eY}=0,\\
&\p_X u^1_e + \p_Y v^1_e = 0,\\
&v^1_e|_{Y=0}=-v^0_b|_{y=0}. \\
\end{aligned}
\right.
\end{align}
Follow the idea of Iyer \cite{I}, we introduce new independent variables by
\begin{align}\label{vari change}
\begin{aligned}
\theta(X,Y)=X, \hspace{5 mm} \psi(X,Y)=\int_0^Yu^0_e(X,Y')dY'.
\end{aligned}
\end{align}
Let $\psi^1$ be the stream function of $[u^1_e,v^1_e]$
\[
\begin{split}
     \psi^1(X,Y):=\int_0^Yu^1_e(X,Y')dY'-\int_0^Xv^1_e(X',0)dX',\hspace{5 mm} \psi^1_Y=u^1_e, \hspace{5 mm} \psi^1_X=-v^1_e,
\end{split}
\]
then first Euler layer equations (\ref{euler1}) are equivalent to
\begin{align} \label{stream-euler1}
\begin{aligned}
     \p_\theta[\Delta_{XY} \psi^1-F_e'(\psi)\psi^1]=0.
\end{aligned}
\end{align}
Which reduce to find a solution of the following equations
\begin{align} \label{stream-euler1-bry}
\left\{
\begin{aligned}
     &\Delta \psi^1-F_e'(\psi)\psi^1=0,\\
     &\psi^1|_{X=0}=\psi^1_0(Y),\hspace{3 mm}\psi^1|_{X=L}=\psi^1_L(Y),\\
     &\psi^1|_{Y=0}=\int_0^Xv^0_b(X',0)dX',\hspace{3 mm}\psi^1|_{Y\rightarrow\infty}=0.
\end{aligned}
\right.
\end{align}
It is a standard elliptic problem, we have the following result.
\begin{Lemma}
If $v^0_b$ is a smooth functions, for any $L>0$, there exist some $\psi^1_0(Y)$, $\psi^1_L(Y)$ such that equations (\ref{stream-euler1-bry}) admit a smooth solution satisfying the following estimate
\begin{align}
\|\z Y\y^M\nabla^k\psi^1\|\lesssim1,\hspace{1mm}\text{  for  } 1\leqslant k \leqslant K, K \text{ and } M \text{ are large constants }.
\end{align}
\end{Lemma}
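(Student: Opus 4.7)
The plan is to reduce the problem to a linear elliptic one with homogeneous boundary data at $Y=0$ and exploit the freedom in choosing $\psi^1_0, \psi^1_L$ by solving on an enlarged domain and reading off the lateral traces. Writing $\psi^1 = G + \tilde\psi$ with the lift $G(X,Y) := \chi(Y)\int_0^X v^0_b(X',0)\,dX'$ (where $\chi\in C_c^\infty([0,\infty))$ equals $1$ near $Y=0$), the function $\tilde\psi$ must satisfy $\Delta\tilde\psi - F_e'(\psi)\tilde\psi = h$ with $h := -\Delta G + F_e'(\psi)G$ smooth, bounded and compactly supported in $Y$, together with homogeneous Dirichlet data at $Y=0$ and decay as $Y\to\infty$. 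By Lemma \ref{pra0} the trace $v^0_b(X,0)$ is smooth in $X$, so $G$ and $h$ have all the regularity needed below.

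For existence I would pose this reduced equation on the enlarged half-strip $\Omega_+ := (-1,L+1)\times\R^+$ with zero Dirichlet data on the lateral sides and at $Y=0$, approximating by Dirichlet problems on $(-1,L+1)\times(0,N)$ and letting $N\to\infty$ with uniform weighted bounds. On each bounded rectangle the problem is a standard linear elliptic Dirichlet problem with bounded potential $F_e'(\psi)\in L^\infty$: Fredholm alternative plus interior/boundary Schauder regularity yield a smooth solution, and coercivity of the bilinear form $B(u,v) := \int(\nabla u\cdot\nabla v + F_e'(\psi)uv)$ on $H^1_0$ follows from Poincar\'e in the bounded $X$-direction combined with the boundedness of $F_e'(\psi)$ (sharpened when needed by the smallness assumption (\ref{small}) or by the smallness of $L$). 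Calling the resulting limit $\Psi$ on $\Omega_+$, I set $\psi^1 := \Psi + G$ on $(0,L)\times\R^+$ and read off $\psi^1_0(Y) := \Psi(0,Y) + G(0,Y)$ and $\psi^1_L(Y) := \Psi(L,Y) + G(L,Y)$.

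The weighted estimate would come from multiplying the equation for $\tilde\psi$ by $\z Y\y^{2M}\tilde\psi$ and integrating by parts on $\Omega_+$: the commutator $[\Delta,\z Y\y^M]$ produces only first-order lower-order terms that are absorbed via Cauchy--Schwarz (since $\p_Y\z Y\y^M\lesssim\z Y\y^{M-1}$ with $M$ fixed), while the source-term contribution is controlled by $\|\z Y\y^M h\|$, which is finite because $h$ has compact $Y$-support. Higher-order bounds follow by differentiating in $X$ (which preserves the boundary conditions) and then recovering normal derivatives from the equation through elliptic regularity, iterating up to order $K$. The main obstacle I anticipate is producing kernel triviality in a unified framework that covers both Theorem \ref{main1} and Theorem \ref{main2}: without the smallness in (\ref{small}) or small $L$, $B$ need not be strictly positive definite, but the freedom to modify $\psi^1_0, \psi^1_L$ by kernel elements absorbs any finite-dimensional obstruction arising from Fredholm, which is precisely what the existential phrasing of the lemma permits.
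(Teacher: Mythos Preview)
The paper does not actually prove this lemma: immediately before the statement it simply says ``It is a standard elliptic problem, we have the following result.'' Your proposal is therefore necessarily more detailed than what the paper provides, and your overall scheme --- lift the inhomogeneous bottom data by a compactly supported cutoff, solve the resulting equation on an enlarged strip with zero Dirichlet data, then read off the lateral traces at $X=0$ and $X=L$ --- is a perfectly reasonable way to flesh out ``standard.'' The weighted energy argument and the bootstrap via $X$-differentiation plus elliptic regularity are also fine.

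There is one point where your reasoning slips. In the final paragraph you try to handle the possible failure of coercivity for large $L$ by saying that ``the freedom to modify $\psi^1_0,\psi^1_L$ by kernel elements absorbs any finite-dimensional obstruction arising from Fredholm.'' This does not work as stated: on the enlarged domain $(-1,L+1)\times\R^+$ with zero Dirichlet data the lateral traces at $X=0,L$ are \emph{outputs}, not inputs, so you have no freedom there to kill an obstruction. The obstruction to solvability is orthogonality of $h$ to the cokernel (which for this self-adjoint problem equals the kernel), and adding kernel elements to a putative solution does nothing for existence. Two clean fixes are available. First, since $F_e'(\psi)$ is bounded and decays as $Y\to\infty$, the operator $-\Delta+F_e'(\psi)$ with Dirichlet data is a relatively compact perturbation of $-\Delta$ on $(-a,L+a)\times(0,N)$, so its eigenvalues depend analytically on $a$; zero can be an eigenvalue only for a discrete set of $a$, and you may simply choose the enlargement parameter $a$ to avoid it. Second, you can keep $a$ fixed and instead modify the lift: the cutoff $\chi$ in $G$ is arbitrary away from $Y=0$, and adding to $G$ a smooth function supported in $\{Y>0\}$ changes $h$ by an arbitrary element of a dense subspace, which lets you satisfy finitely many orthogonality conditions. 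Either route closes the gap and matches what the paper intends by ``standard.'' You should also note that extending $F_e'(\psi)$ and the bottom datum $\int_0^X v^0_b(X',0)\,dX'$ smoothly to the enlarged $X$-interval is required; this is routine but worth stating.
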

\begin{Remark}
The boundary conditions of $\psi^1$ in (\ref{stream-euler1-bry}) imply the following boundary conditions of $[u^1_e,v^1_e]$
\begin{align}\label{bry of euler1}
\begin{aligned}
&u^1_e|_{X=0}=\p_Y\psi^1_0(Y),\hspace{3 mm}u^1_e|_{X=L}=\p_Y\psi^1_L(Y),\\
&v^1_e|_{Y=0}=-v^0_b(X,0),\hspace{3 mm}[u^1_e,v^1_e]|_{Y\rightarrow\infty}=0.
\end{aligned}
\end{align}
So we only need to construct a solution $[u^1_e,v^1_e]$ to equations (\ref{euler1}) with boundary conditions (\ref{bry of euler1}).
\end{Remark}

Next we need to solve the first order boundary layer profile. For simplicity, we introduce some notations
\begin{equation}
\begin{aligned}
&u^k_p:=u^k_b+\sum_{j=0}^k\frac{y^j}{j!}\p^j_Yu^{k-j}_e|_{Y=0},\hspace{5 mm} u^{(k)}_e:=\sum_{j=0}^k\frac{y^j}{j!}\p^j_Yu^{k-j}_e|_{Y=0},\\
&v^k_p:=v^k_b-v^k_b|_{y=0}+\sum_{j=0}^{k}\frac{y^{j+1}}{(j+1)!}\p^{j+1}_Yv^{k-j}_e|_{Y=0},\hspace{5 mm} v^{(k)}_e:=\sum_{j=0}^{k}\frac{y^{j+1}}{(j+1)!}\p^{j+1}_Yv^{k-j}_e|_{Y=0}.
\end{aligned}
\end{equation}
And $[u^1_b,v^1_b,p^1_b]$ solves the linearized Prandtl's equations around $[u^0_p,v^0_p]$:
\begin{align} \label{prandtl1}
\left
\{
\begin{aligned}
& u^0_p \p_x u^1_b + u^1_b \p_x u^0_p + \p_y u^0_p [v^1_b - v^1_b|_{y = 0}] + v^0_p \p_y u^1_b- \p_{yy} u^1_b + \p_x p^1_b = f^{(1)}, \\
& \p_y p^1_b = 0,\\
& \p_x u^1_b + \p_y v^1_b = 0,\\
& u^1_b|_{y = 0} = -u^1_e|_{Y = 0}, \hspace{5 mm} [u^1_b, v^1_b]|_{y \rightarrow \infty} = 0,
\end{aligned}
\right.
\end{align}
where
\begin{equation}
\begin{aligned}
f^{(1)}=&-\{u^0_bu^{(1)}_{ex}+u^0_{bx}u^{(1)}_e+v^0_b\p_yu^{(1)}_e+u^0_{by}v^{(1)}_e\}.
\end{aligned}
\end{equation}
We see that $f^{(1)}$ decays fast when $y\rightarrow\infty$ from lemma \ref{pra0}. Since that above equations are linear parabolic type equations, we add a boundary condition on $u^1_b|_{x=0}$
\begin{align} \label{prandtl1 bry}
\left
\{
\begin{aligned}
& u^0_p \p_x u^1_b + u^1_b \p_x u^0_p  + v^0_p \p_y u^1_b+ [v^1_b - v^1_b|_{y = 0}]\p_y u^0_p - \p_{yy} u^1_b+ \p_x p^1_b = f^{(1)}, \\
& \p_y p^1_b = 0,\\
& \p_x u^1_b + \p_y v^1_b = 0,\\
& u^1_b|_{x = 0}=U^1_B, \hspace{5 mm} u^1_b|_{y = 0} = -u^1_e|_{Y = 0}, \hspace{5 mm} [u^1_b, v^1_b]|_{y \rightarrow \infty} = 0.
\end{aligned}
\right.
\end{align}
We can also discuss the compatibility conditions like Prandtl's equations. In our case, $v^0_p$ is different in \cite{GI-H}, because $v^0_p\sim yv^0_{eY}(x,0)$ as $y$ goes to $\infty$, still we have
\begin{Lemma}\label{pra1}
If $f^{(1)}$ and its derivatives are bounded and decaying rapidly, they satisfy the parabolic compatibility conditions, then equations (\ref{prandtl1 bry}) admit a unique solution $[u^1_b,v^1_b]$, and
\begin{align}
\| \z y\y^M\nabla^k u^1_b\|_\infty+\| \z y\y^M\nabla^k v^1_b\|_\infty \lesssim 1  \hspace{2mm}\text{  for  }\hspace{2mm} 0\leqslant k \leqslant K,
\end{align}
here $K$ and $M$ are large constants.
\end{Lemma}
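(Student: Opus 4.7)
The system (\ref{prandtl1 bry}) is a linear degenerate parabolic problem with $x$ playing the role of time, driven by the principal operator $u^0_p\p_x-\p_{yy}$. Under hypothesis (\ref{pos}) and Proposition \ref{Oleinik}, $u^0_p$ is positive for $y>0$ with $\p_y u^0_p>m_0$ near $y=0$, and $u^0_p\to u^0_e(x,0)\geqslant c_0$ as $y\to\infty$, so the operator is of Oleinik type. I would first homogenize the lateral Dirichlet datum by the substitution $\tilde u^1_b:=u^1_b+\chi(y)u^1_e|_{Y=0}(x)$, with $\chi$ smooth, rapidly decaying and $\chi(0)=1$. This reduces the problem to zero trace at $y=0$ at the cost of additional smooth, fast-decaying contributions to the source, which remain of the same type as $f^{(1)}$.

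The key analytic difficulty is the non-local forcing $[v^1_b-v^1_b|_{y=0}]\p_y u^0_p$, for which a direct energy estimate on $u^1_b$ does not close. Following the \emph{good unknown} philosophy of Masmoudi--Wong, I would pass to
\[
\eta:=\p_y\Big(\frac{u^1_b}{\p_y u^0_p}\Big),
\]
which is well-defined since $\p_y u^0_p>0$ (strictly positive near $y=0$ by Proposition \ref{Oleinik} and bounded below on compacts by Oleinik's construction). Using the equation satisfied by $u^0_p$, the dangerous contribution $v^1_b\p_y u^0_p$ cancels, and $\eta$ satisfies a \emph{local} equation of the schematic form
\[
u^0_p\p_x\eta-\p_{yy}\eta+a(x,y)\p_y\eta+b(x,y)\eta=\tilde f,
\]
with smooth coefficients $a,b$ built from $u^0_p, v^0_p$ and their derivatives, and with a source $\tilde f$ inheriting the rapid decay of $f^{(1)}$. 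For this closed parabolic equation I would apply weighted $L^2$ energy estimates, multiplying by $\z y\y^{2M}\eta$ and integrating on $(0,x)\times\R^+$; the linearly growing coefficient $v^0_p\sim y\,\p_Y v^0_e(x,0)$ at infinity is absorbed by the diffusive term $-\p_{yy}\eta$ together with the smallness in (\ref{small}). Higher derivatives come from commuting $\p_x$ and $\p_y$ with the equation; the parabolic compatibility conditions at the corner $(0,0)$, analogous to (\ref{compati h}), are exactly what is needed to avoid boundary singularities. Once $\eta$ is controlled in the required weighted $L^\infty$ norm, $u^1_b$ and $v^1_b$ are recovered by
\[
u^1_b=\p_y u^0_p\int_0^y\eta\,dy'+\frac{\p_y u^0_p(x,y)}{\p_y u^0_p(x,0)}\,u^1_b(x,0),\qquad v^1_b(x,y)=-\int_0^y\p_x u^1_b(x,y')\,dy',
\]
with $v^1_b\to 0$ as $y\to\infty$ following from $\int_0^\infty\p_x u^1_b\,dy=0$, a consequence of the far-field condition on $v^1_b$.

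The main obstacle will be achieving the prescribed polynomial decay $\z y\y^M$ simultaneously with high-order regularity up to the corner. Decay is handled by an iterative bootstrap in the weight exponent, exploiting the fact that the linearly growing coefficient $v^0_p\p_y\eta$ loses exactly one power of $y$ per estimate while the diffusive term $-\p_{yy}\eta$ contributes a gain of two; thus an estimate at weight $\z y\y^{M-1}$ feeds into the next estimate at weight $\z y\y^M$. Regularity up to any order $k\leqslant K$ is obtained by differentiating the equation and rerunning the same weighted estimates, checking compatibility at $(0,0)$ at each order. Uniqueness follows immediately from the $L^2$ energy identity applied to the difference of two solutions, using the positivity of $u^0_p$.
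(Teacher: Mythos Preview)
Your homogenization step and general energy-method strategy are fine, but the choice of good unknown has a genuine obstruction in this setting. The Masmoudi--Wong variable $\eta=\p_y(u^1_b/\p_y u^0_p)$ requires dividing by $\p_y u^0_p$, and here $u^0_p\to u^0_e(x,0)$ as $y\to\infty$ with $\p_y u^0_p$ decaying faster than any polynomial (this is exactly the content of Lemma~\ref{pra0}). Hence the quotient $u^1_b/\p_y u^0_p$ grows faster than any $\z y\y^M$ unless $u^1_b$ already has the rapid decay you are trying to establish; the weighted estimates on $\eta$ therefore do not close, and your reconstruction formula $u^1_b=\p_y u^0_p\int_0^y\eta$ cannot produce polynomial decay of $u^1_b$ from polynomial control of $\eta$. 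In addition, under the hypotheses of Theorem~\ref{main1} there is no global monotonicity assumption on $u^0_p$, so $\p_y u^0_p$ is not even known to be nonvanishing away from $y=0$; Proposition~\ref{Oleinik} only gives positivity on $(0,y_0)$.

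The paper avoids both issues by working instead with the stream function $\phi$ of the homogenized unknown and the quotient $g=\phi/u^0_p$. Since $u^0_p\sim y$ near $y=0$ while $\phi=O(y^2)$ there, and $u^0_p\to u^0_e(x,0)\geqslant c_0>0$ at infinity, the quotient $g$ is globally well-defined and bounded in the weighted sense. The nonlocal coupling disappears through the Rayleigh identity $\ub\phi_{xy}-\phi_x\ub_y+\ub_x\phi_y=\p_x[\ub^2 g_y]$, yielding the closed equation
\[
\p_x[\ub^2 g_y]-\p_y^3[\ub g]+\vb\,\p_y^2[\ub g]-\ub\,\vb_{yy}g=h.
\]
One then multiplies by $g_y\rho^2$ with $\rho=\z y\y^N$, uses a Hardy-type splitting (cf.\ Lemma~\ref{Hardy}) to control $\|g_y\rho\|$ by $\|\ub g_y\rho\|$ and $\|\sqrt{\ub}g_{yy}\rho\|$ with a factor $L^{1/3}$, and closes for $L$ small. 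Higher $x$-regularity is obtained by differentiating in $x$ and reusing the equation for $\phi_{yyy}$ to bound the new commutator $\p_y^3[\ub_x g]$. If you want to salvage the Masmoudi--Wong route you would need to localize it to a neighbourhood of $y=0$ and patch with a separate far-field argument; as written, the scheme does not close.
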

We will give the proof of lemma \ref{pra1} in Appendix.

The second order Euler profile $[u^2_e,v^2_e,p^2_e]$ solves the linearized Euler equations around $[u^0_e,v^0_e]$ with the force terms:
\begin{align} \label{euler2}
\left\{
\begin{aligned}
&u^0_eu^2_{eX}+u^0_{eX}u^2_e+v^0_eu^2_{eY}+u^0_{eY}v^2_e+p^2_{eX}=F^{(2)},\\
&u^0_ev^2_{eX}+v^0_{eX}u^2_e+v^0_ev^2_{eY}+v^0_{eY}v^2_e+p^2_{eY}=G^{(2)},\\
&\p_X u^2_e + \p_Y v^2_e = 0,\\
&v^2_e|_{Y=0}=-v^1_b|_{y=0},\\
\end{aligned}
\right.
\end{align}
where
\begin{equation}
\begin{aligned}
&F^{(2)}=-(u^{1}_eu^1_{ex}+v^{1}_eu^1_{eY})+\Delta u^{0}_e,\\
&G^{(2)}=-(u^{1}_ev^1_{ex}+v^{1}_ev^1_{eY})+\Delta v^{0}_e.
\end{aligned}
\end{equation}
We can treat above equations as that of the first order Euler flow.
\begin{align} \label{stream-euler2}
\begin{aligned}
     \p_\theta[\Delta_{XY} \psi^2-F_e'(\psi)\psi^2-\frac{F''_e(\psi)}{2}(\psi^1)^2]=\frac{\Delta^2_{XY}\psi}{u}.
\end{aligned}
\end{align}
Let $$H(\theta,\psi)=\int_0^\theta \frac{\Delta^2_{XY}\psi(\theta',Y(\theta',\psi))}{u(\theta',\psi)}d\theta'.$$ Notice that $\psi\sim Y$ when $Y\rightarrow\infty$, we have that $H$ is of fast decay as $\psi\rightarrow\infty$ because of (\ref{Eul u}) and (\ref{Eul v}).
We can find a solution of the following equations
\begin{align} \label{stream-euler2-bry}
\left\{
\begin{aligned}
     &\Delta_{XY} \psi^2-F_e'(\psi)\psi^2-\frac{F''_e(\psi)}{2}(\psi^1)^2=H(\theta(X,Y),\psi(X,Y)),\\
     &\psi^2|_{X=0}=\psi^2_0(Y),\hspace{3 mm}\psi^2|_{X=L}=\psi^2_L(Y),\\
     &\psi^2|_{Y=0}=\int_0^Xv^1_b(X',0)dX',\hspace{3 mm}\psi^2|_{Y\rightarrow\infty}=0,
\end{aligned}
\right.
\end{align}
with suitable $\psi^2_0(Y)$, $\psi^2_L(Y)$, and we have estimates of the second order Euler flow
\begin{align}
\|\z Y\y^M\nabla^k\psi^2\|\lesssim1,\hspace{1mm}\text{  for  } 1\leqslant k \leqslant K,\quad K \text{ and } M \text{ large comstants }.
\end{align}

The second order boundary layer profile $[u^2_b,v^2_b,p^2_b]$ is similar to the first, we need to solve the following equations
\begin{align} \label{prandtl2 bry}
\left
\{
\begin{aligned}
& u^0_p \p_x u^2_b + u^2_b \p_x u^0_p + v^0_p \p_y u^2_b+  [v^2_b - v^2_b|_{y = 0}]\p_y u^0_p- \p_{yy} u^2_b + \p_x p^2_b = f^{(2)}, \\
& \p_y p^2_b = g^{(2)},\\
& \p_x u^2_b + \p_y v^2_b = 0,\\
& u^2_b|_{x = 0}=U^2_B, \hspace{5 mm} u^2_b|_{y = 0} = -u^2_e|_{Y = 0}, \hspace{5 mm} [u^2_b, v^2_b]|_{y \rightarrow \infty} = 0.
\end{aligned}
\right.
\end{align}
Where
\begin{equation}
\begin{aligned}
f^{(2)}=&-\{u^0_bu^{(2)}_{ex}+u^0_{bx}u^{(2)}_e+v^0_bu^{(2)}_{ey}+u^0_{by}v^{(2)}_e\\
        &+u^{1}_pu^1_{bx}+u^{1}_bu^{(1)}_{ex}+v^{1}_pu^1_{by}+v^{1}_bu^{(1)}_{ey}-u^{0}_{bxx}\},\\
g^{(2)}=&-\{u^{0}_bv^0_{px}+u^{(0)}_ev^0_{bx}+v^{0}_bv^0_{py}+(v^{(0)}_e+v^1_e|_{Y=0})v^0_{by}\\
        &-v^{0}_{byy}\}.\\
\end{aligned}
\end{equation}
We can see $f^{(2)}$ and $g^{(2)}$ decays fast when $y\rightarrow\infty$ from Lemma \ref{pra0} and Lemma \ref{pra1}. By using the same argument of Lemma \ref{pra1}, we have
\begin{align}
\| \z y\y^M\nabla^ku^2_b\|_\infty+\| \z y\y^M\nabla^k v^2_b \|_\infty \lesssim 1 \hspace{2mm}\text{  for  }\hspace{2mm} 0\leqslant k \leqslant K,
\end{align}
here $K$ and $M$ are large constants.

After that, $p_b^{3}$ is solved by
\begin{equation}\label{p3}
\begin{aligned}
p_b^{3}=&\int_y^\infty\{\sum_{j=0}^{1}[u^{1-j}_bv^j_{px}+u^{(1-j)}_ev^j_{bx}+v^{1-j}_bv^j_{py}\\
          &+(v^{(1-j)}_e+v^{1-j}_e|_{Y=0})v^j_{by}]-v^{1}_{byy}\}dy'.
\end{aligned}
\end{equation}
We can conclude the following proposition for the approximate profiles
\begin{Proposition}\label{construct} Under the assumptions of theorem \ref{main1} or theorem \ref{main2}, then equations (\ref{euler1}), (\ref{prandtl1}), (\ref{euler2}), (\ref{prandtl2 bry}) admit smooth solutions $[u^j_e,v^j_e]$, $[u^j_b,v^j_b]$ for $j=1,2$, and  the following estimates hold
\begin{align}
\begin{aligned}
&\| \z y\y^M\nabla^k u^j_b\|_\infty+\| \z y\y^M\nabla^k v^j_b\|_\infty \lesssim 1\hspace{2mm}\text{  for  }\hspace{2mm}  0\leqslant k \leqslant K,j=0,1,2, \\
&\| \z Y\y^M\nabla^ku^j_e\|_\infty+\| \z Y\y^M\nabla^kv^j_e\|_\infty  \lesssim 1 \hspace{2mm}\text{  for  }\hspace{2mm}  0\leqslant k \leqslant K,  j=1,2,
\end{aligned}
\end{align}
\noindent here $K,$ $M$ sufficiently large constants, $\z y\y=y+1$ and $\z Y\y=Y+1$.
\end{Proposition}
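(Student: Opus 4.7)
The plan is to carry out the construction iteratively in the order $j=0,1,2$, at each step invoking the existence results already established in this section and tracking how the weighted decay is inherited by the forcings of the next linearized problem. At order zero, $[u^0_e,v^0_e]$ is given by hypothesis and satisfies (\ref{Eul profile})--(\ref{Eul u}), while $[u^0_p,v^0_p]$ is produced either by Proposition \ref{Oleinik} (small $L$, for Theorem \ref{main1}) or by Proposition \ref{pra02} (arbitrary $L$ in the shear setting of Theorem \ref{main2}). In either case Lemma \ref{pra0} and its analog give $\|\langle y\rangle^M\nabla^k(u^0_p-u^0_e(x,0))\|_\infty\lesssim 1$ for $0\leqslant k\leqslant K$. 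Setting $u^0_b:=u^0_p-u^0_e(x,0)$ and $v^0_b:=\int_y^\infty u^0_{bx}\,dy'$, the weighted decay of $u^0_b$ transfers to $v^0_b$ by differentiation in $x$ followed by integration in $y$.

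At first order, the Euler correction $[u^1_e,v^1_e]$ is recovered from the stream function $\psi^1$ solving the standard elliptic problem (\ref{stream-euler1-bry}) with boundary datum $\int_0^X v^0_b(X',0)\,dX'$ at $Y=0$, zero at infinity, and compatible data $\psi^1_0,\psi^1_L$ at $X=0,L$. Rapid decay of $v^0_b|_{y=0}$ in $X$ together with the decay of $F_e'(\psi)$ coming from (\ref{Eul v})--(\ref{Eul u}) propagates through the resolvent to yield $\|\langle Y\rangle^M\nabla^k\psi^1\|\lesssim 1$, hence the claimed estimates for $u^1_e=\psi^1_Y$ and $v^1_e=-\psi^1_X$. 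The boundary-layer correction $[u^1_b,v^1_b]$ then solves the linearized Prandtl system (\ref{prandtl1 bry}) about $[u^0_p,v^0_p]$ with forcing $f^{(1)}$. Since $f^{(1)}$ is built from $u^0_b,v^0_b,u^{(1)}_e,v^{(1)}_e$, each of which is smooth with rapid $y$-decay (the Taylor polynomials $u^{(1)}_e,v^{(1)}_e$ are subtracted from the full Euler profile at $y=0$ precisely to cancel the polynomial growth), Lemma \ref{pra1} applies once $U^1_B$ is chosen to satisfy the parabolic compatibility conditions at the corner and match $-u^1_e|_{Y=0}$ on $y=0$.

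The second-order step parallels the first. For $[u^2_e,v^2_e]$, the stream function $\psi^2$ solves (\ref{stream-euler2-bry}), whose right-hand side involves the nonlinear term $\tfrac{F_e''(\psi)}{2}(\psi^1)^2$ and the source $H(\theta,\psi)=\int_0^\theta \Delta^2\psi/u\,d\theta'$. Far-field decay of $H$ in $\psi$ (hence in $Y$, since $\psi\sim Y$) follows from (\ref{Eul v})--(\ref{Eul u}), and combined with the decay of $(\psi^1)^2$ yields $\|\langle Y\rangle^M\nabla^k\psi^2\|\lesssim 1$ after choosing compatible data at $X=0,L$. The layer $[u^2_b,v^2_b]$ then solves (\ref{prandtl2 bry}), whose forcings $f^{(2)},g^{(2)}$ depend only on the previously constructed profiles and therefore decay fast in $y$ by Lemmas \ref{pra0} and \ref{pra1}, so another application of the argument proving Lemma \ref{pra1} (after choosing $U^2_B$ compatibly) gives the desired bounds. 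The pressure $p_b^3$ is then obtained by direct integration as in (\ref{p3}).

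The main obstacle in this scheme is bookkeeping rather than analysis: at each level one must simultaneously (i) verify that the forcing inherits rapid weighted decay in $y$ or $Y$, (ii) propagate high-order parabolic compatibility at the corners $(0,0)$ and $(L,0)$ so that the Prandtl-type solutions from Lemma \ref{pra1} are in fact smooth, and (iii) enforce the matching $v^j_e|_{Y=0}=-v^{j-1}_b|_{y=0}$ between Euler and boundary-layer components through the correctors $u^{(k)}_e,v^{(k)}_e$. In particular, because $v^0_p\sim y\,v^0_{eY}(x,0)$ as $y\to\infty$, one must use the specific form of $v^{(1)}_e$ to subtract this linear growth before applying Lemma \ref{pra1} at the first-order step; this is precisely the point addressed in the remark preceding Lemma \ref{pra1}. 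Once these compatibility and matching items are handled level by level, the proposition follows by assembling the three construction steps.
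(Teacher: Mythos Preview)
Your proposal is correct and follows essentially the same approach as the paper: Proposition~\ref{construct} is not proved separately there but is simply the summary of the iterative constructions carried out in Section~2, invoking in turn Proposition~\ref{Oleinik}/Lemma~\ref{pra0} (or Proposition~\ref{pra02}) for order zero, the elliptic lemma for $\psi^1$ and Lemma~\ref{pra1} for order one, and the analogous results for order two. One small slip: you refer to ``rapid decay of $v^0_b|_{y=0}$ in $X$'', but $X\in[0,L]$ is bounded, so only smoothness (not decay) is needed there---the weighted decay that matters for $\psi^1$ is in $Y$ and comes from the decay of $F_e'(\psi)$.
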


Notices that $v^2_b|_{y=0}\neq0$.  We need to match the boundary conditions at $y=0$, and also   $v^2_b|_{y \rightarrow\infty}=0$. Then we can modify $[\hat{u}^2_b,\hat{v}^2_b]$ in this way:
\begin{equation}\label{modify}
\begin{aligned}
&\hat{u}^2_b(x,y):=\chi(\sqrt\e y)u^2_b(x,y)-\sqrt\e\chi'(\sqrt\e y)\int_0^y u^2_b(x,y')dy',\\
&\hat{v}^2_b(x,y):=\chi(\sqrt\e y)(v^2_b(x,y)-v^2_b(x,0)),
\end{aligned}
\end{equation}
where $\chi$ is a cut-off function satisfying $\chi|_{[0,1]}=1$ and $\chi|_{[2,\infty]}=0$.

And let $[U_s,V_s,P_s]$ be
\begin{equation}
\begin{aligned}
U_s(X,Y)=&u^0_e(X,Y)+u^0_b(X,\frac{Y}{\se})+\se[u^1_e(X,Y)+u^1_b(X,\frac{Y}{\se})]\\
          &+\e[u^2_e(X,Y)+\hat{u}^2_b(X,\frac{Y}{\se})],\\
V_s(X,Y)=&v^0_e(X,Y)+\se[v^0_b(X,\frac{Y}{\se})+v^1_e(X,Y)]+\e[v^1_b(X,\frac{Y}{\se})+v^2_e(X,Y)]\\
          &+\e^\frac{3}{2}\hat{v}^2_b(X,\frac{Y}{\se}),\\
P_s(X,Y)=&p^0_e(X,Y)+p^0_b(X,\frac{Y}{\se})+\se[p^1_e(X,Y)+p^1_b(X,\frac{Y}{\se})]\\
          &+\e[p^2_e(X,Y)+p^2_b(X,\frac{Y}{\se})]+\e^\frac{3}{2}p^3_b(X,\frac{Y}{\se}).
\end{aligned}
\end{equation}
Then the errors
\begin{equation}\label{R1,R2}
\begin{aligned}
&R_1:=U_sU_{sX}+V_sU_{sY}-\e\Delta U_s+P_{sX},\\
&R_2:=U_sV_{sX}+V_sV_{sY}-\e\Delta V_s+P_{sY},
\end{aligned}
\end{equation}
satisfy
\begin{equation}
\begin{aligned}\label{R order}
\|R_1\|+\|R_2\|\lesssim\e^{\frac{3}{2}}.
\end{aligned}
\end{equation}

\begin{Remark}\label{Us}
We obtain what $[U_s,V_s]$ look like when $\e$ is small
\[
\begin{split}
&U_s(X,Y)= u^0_e(X,Y)+u^0_b(X,\frac{Y}{\se})+O\big(\se\big),\\
&V_s(X,Y)= v^0_e(X,Y)+\se (v^0_b(X,\frac{Y}{\se})+v^1_e(X,Y))+O\big(\e\big).
\end{split}
\]
When $\frac{Y}{\se}\leqslant1$,
\[
\begin{split}
u^0_e(X,Y)+u^0_b(X,\frac{Y}{\se})=u^0_e(X,Y)-u^0_e(X,0)+u^0_p(X,\frac{Y}{\se})\gtrsim-Y+\frac{Y}{\se}\gtrsim\frac{Y}{\se},
\end{split}
\]
when $1\leqslant\frac{Y}{\se}\leqslant \e^{-\frac{1}{4}}$,
\[
\begin{split}
u^0_e(X,Y)+u^0_b(X,\frac{Y}{\se})=u^0_e(X,Y)-u^0_e(X,0)+u^0_p(X,\frac{Y}{\se})\gtrsim-Y+1\gtrsim1,
\end{split}
\]
when $\frac{Y}{\se}\gtrsim \e^{-\frac{1}{4}}$,
\[
\begin{split}
u^0_e(X,Y)+u^0_b(X,\frac{Y}{\se})\gtrsim1,
\end{split}
\]
So $U_s\sim\frac{Y}{\se},$ when $Y\leqslant\se,$ and $U_s\sim 1,$ when $Y\geqslant\se.$

One can easily see for $i,j\geqslant0$,
\begin{align}
\begin{aligned}
&\|\p^j_XU_s\|_\infty\lesssim1,\hspace{2mm}\|\p^j_XV_s\|_\infty\lesssim1,\\
&\se^i\|Y^j\p^{i+j}_YU_s\|_\infty\lesssim\se^i\|Y^j\p^{i+j}_Yu^0_e\|_\infty+\se^i\|\frac{y^j}{\se^i}\p^{i+j}_yu^0_b\|_\infty+O\big(\se\big)\lesssim1,\\
&\se^i\|Y^j\p^{i+j+1}_YV_s\|_\infty\lesssim\se^i\|Y^j\p^{i+j+1}_Yu^0_e\|_\infty+\se^i\|\frac{y^j}{\se^i}\p^{i+j+1}_yu^0_b\|_\infty+O\big(\se\big)\lesssim1.
\end{aligned}
\end{align}
And in shear flow Euler case
\[
\begin{split}
U_{sX}(X,Y)\sim u^0_{bX}(X,\frac{Y}{\se}),\hspace{2mm} V_s(X,Y)\sim\se (v^0_b(X,\frac{Y}{\se})+v^1_e(X,Y)).
\end{split}
\]
\end{Remark}

\section{Estimates of the remainder}
Now we begin to estimate the remainder. Let
\begin{align}
\UE=U_s+V,\hspace{3 mm}\VE=V_s+V.
\end{align}
Then
\begin{equation}
\left\{
\begin{aligned}
&U_sU_X+U_{sX}U+V_sU_Y+U_{sY}V-\e\Delta U+P_X=-\{R_1+UU_X+VU_Y\},\\
&U_sV_X+V_{sX}U+V_sV_Y+V_{sY}V-\e\Delta V+P_Y=-\{R_2+UV_X+VV_Y\},\\
&U_X+V_Y=0.
\end{aligned}
\right.
\end{equation}
We consider the linearized equations:
\begin{equation}\label{LNSE}
\left\{
\begin{aligned}
&U_sU_X+U_{sX}U+V_sU_Y+U_{sY}V-\e\Delta U+P_X=F_1,\\
&U_sV_X+V_{sX}U+V_sV_Y+V_{sY}V-\e\Delta V+P_Y=F_2,\\
&U_X+V_Y=0.
\end{aligned}
\right.
\end{equation}
Our critical estimates is the following proposition
\begin{Proposition}\label{Prop}
Under the assumptions in theorem\ref{main1} or theorem\ref{main2}, if $$[U,V]\in W^{2,2}(\Omega)\cap W^{1,2}_0(\Omega)$$ satisfies the equations (\ref{LNSE}), \noindent then
\begin{align}
\|\se U_X,\se U_Y,\se V_X, \se V_Y, U, V\|\leqslant C(\|F_1\|+\|F_2\|).
\end{align}
\end{Proposition}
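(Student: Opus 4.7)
My plan is to work with the stream function $\Phi$ of the remainder $[U,V]$. Since $[U,V]\in W^{1,2}_0(\Omega)$ and the boundary of the half-strip $\Omega$ is connected, I define $\Phi$ by $\Phi_Y=U$, $\Phi_X=-V$, and fix the additive constant so that both $\Phi$ and $\nabla\Phi$ vanish on $\partial\Omega$. Denoting the remainder vorticity $\omega:=U_Y-V_X=\Delta\Phi$ and the approximate vorticity $\omega_s:=U_{sY}-V_{sX}$, I first eliminate the pressure from (\ref{LNSE}) by taking $\partial_Y$ of the first equation minus $\partial_X$ of the second; using $U_X+V_Y=0$ together with the solenoidality $U_{sX}+V_{sY}=0$ of the profile, one obtains the clean vorticity equation
$$-\varepsilon\Delta\omega + U_s\omega_X + V_s\omega_Y + U\,\partial_X\omega_s + V\,\partial_Y\omega_s = \partial_Y F_1 - \partial_X F_2.$$

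The central step is the energy identity from testing this equation against $\Phi$. Two integrations by parts on the biharmonic term yield $-\varepsilon\|\omega\|^2$ because $\Phi=\partial_\nu\Phi=0$ on $\partial\Omega$. One integration by parts on the convective piece, combined with $U_{sX}+V_{sY}=0$ and $\Phi_X=-V$, $\Phi_Y=U$, rewrites $\int(U_s\omega_X+V_s\omega_Y)\Phi$ as $\int\omega(U_sV-V_sU)$, while the right-hand side paired with $\Phi$ reduces to $-\int(F_1U+F_2V)$. The resulting identity is
$$\varepsilon\|\omega\|^2 = \int\omega(V_sU-U_sV) - \int(U\,\partial_X\omega_s+V\,\partial_Y\omega_s)\Phi + \int(F_1U+F_2V).$$
To convert control of $\|\omega\|=\|\Delta\Phi\|$ into the desired bound on $\|\sqrt{\varepsilon}\nabla U,\sqrt{\varepsilon}\nabla V\|$, I couple this with the standard energy identity obtained by testing (\ref{LNSE}) directly by $[U,V]$, namely $\varepsilon\|\nabla U\|^2+\varepsilon\|\nabla V\|^2+\int[U_{sX}(U^2-V^2)+(U_{sY}+V_{sX})UV]=\int(F_1U+F_2V)$. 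The only dangerous contribution in this companion identity is $\int U_{sY}UV$; it is tamed via the Hardy inequality $\|V/Y\|_{L^2}\lesssim\|V_Y\|_{L^2}$ together with the uniform bound $\|Y\,U_{sY}\|_{L^\infty}\lesssim 1$, which follows from the boundary-layer ansatz recorded in Remark \ref{Us}.

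The $L^2$ parts $\|U\|,\|V\|$ of the target inequality are recovered from $\Phi$ through the Poincar\'e inequality in $X$: since $\Phi(0,Y)=\Phi(L,Y)=0$, one has $\|\Phi\|_{L^2}\lesssim L\|V\|_{L^2}$ and $\|U\|,\|V\|\lesssim L\|\nabla U,\nabla V\|$. Combining this with the smallness of $\|V_s\|_{L^\infty}$ allows the term $\int\omega(V_sU-U_sV)$ to be absorbed into $\varepsilon\|\omega\|^2$ by Young's inequality. The vorticity-interaction term $\int(U\,\partial_X\omega_s+V\,\partial_Y\omega_s)\Phi$ is split into its Euler and Prandtl components; the Prandtl component is singular in $\varepsilon$, and here I would invoke the stronger Hardy inequalities $\|V/Y\|_{L^2}\lesssim\|V_Y\|_{L^2}$ and $\|\Phi/Y^2\|_{L^2}\lesssim\|\Phi_{YY}\|_{L^2}$ (both valid since $\Phi=\Phi_Y=0$ on $Y=0$) to trade singular weights against bounded ones, producing contributions of the form $\sqrt{\varepsilon}\,\|\nabla^2\Phi\|\cdot\|\nabla U,\nabla V\|$ that can be absorbed into the dissipation.

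The main obstacle will be precisely this last step: controlling $\int V\,\partial_Y\omega_s\,\Phi$ in the boundary layer, where $\partial_Y\omega_s\sim\varepsilon^{-1}\partial_y^3 u^0_p$ is highly singular. Closing this term requires simultaneously extracting two powers of Hardy weight from $V$ and $\Phi$ and keeping careful track of the decay of $\partial_y u^0_p$ and its derivatives in the rescaled variable $y=Y/\sqrt{\varepsilon}$. It is at this point that the hypotheses of the two theorems enter decisively: for Theorem \ref{main1}, the smallness of $L$ together with $\|v^0_e\|_{L^\infty}\ll 1$ supplies the small constants required for absorption; while for Theorem \ref{main2} the monotonicity $u^0_{py}>0$ combined with the sign condition $-u^0_{pyy}\geq 0$ provides a coercive structure that compensates for the absence of smallness in $L$.
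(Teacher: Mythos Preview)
Your proposal contains a genuine gap: the pairing you choose does not produce any $\varepsilon$-independent coercive control of $\|U\|$ and $\|V\|$, and without such control the scheme cannot close. Concretely, testing the vorticity equation against $\Phi$ gives only $\varepsilon\|\Delta\Phi\|^2$ on the left, while on the right the term $-\int \Delta\Phi\, U_s V$ (part of your $\int\omega(V_sU-U_sV)$) is of size $\|U_s\|_\infty\|\Delta\Phi\|\|V\|\sim\|\Delta\Phi\|\|V\|$; after Young's inequality this becomes $\tfrac{\varepsilon}{2}\|\Delta\Phi\|^2+\tfrac{C}{\varepsilon}\|V\|^2$, and the factor $\varepsilon^{-1}\|V\|^2$ cannot be absorbed by Poincar\'e in $X$ unless $L\lesssim\varepsilon$, which is far stronger than either hypothesis. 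The same defect appears if you integrate by parts first: one is left with $\int U_{sY}\Phi_X\Phi_Y$, and even after trading $U_{sY}$ for $Y U_{sY}$ and using Hardy on $\Phi_Y/Y$, you still face $\|V\|\|U_Y\|$, which again forces an $\varepsilon^{-1}\|V\|^2$ when balanced against $\varepsilon\|\nabla^2\Phi\|^2$. The companion identity from testing (\ref{LNSE}) by $[U,V]$ does not rescue this: it only yields $\varepsilon\|\nabla U,\nabla V\|^2\lesssim\|F\|(\|U\|+\|V\|)$, so the $L^2$ bound on $U,V$ must come from elsewhere.

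What is missing is precisely the paper's key idea: pass to the Rayleigh quotient $G=\Phi/U_s$ and test the resulting fourth-order equation against $-G$ and $-G\,(L-X)$. The point is the algebraic identity
\[
U_s\Delta\Phi_X-\Phi_X\Delta U_s=\partial_{XX}[U_s^2 G_X]+\partial_{XY}[U_s^2 G_Y],
\]
so that pairing with $-G(L-X)$ produces $\tfrac{3}{2}\|U_sG_X\|^2+\tfrac{1}{2}\|U_sG_Y\|^2$ directly, with no $\varepsilon$ in front; these quantities are equivalent (via $\Phi_X=U_sG_X+U_{sX}G$, $\Phi_Y=U_sG_Y+U_{sY}G$ and a Hardy-type inequality tailored to the profile $U_s$) to $\|U\|^2+\|V\|^2$. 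The bi-Laplacian then contributes the weighted second-order norm, and in the shear case of Theorem~\ref{main2} the sign conditions $-u^0_{pyy}\ge0$, $u^0_{py}>0$ enter not as a vague ``coercive structure'' but because the explicit lower-order coefficients $-\varepsilon U_{sYY}+\tfrac12 V_sU_{sY}-\tfrac12 V_{sY}U_s$ and $-\varepsilon U_{sYY}+\tfrac32 V_sU_{sY}-\tfrac32 V_{sY}U_s$ arising in front of $|G_X|^2$ and $|G_Y|^2$ are, up to $O(\sqrt\varepsilon)$, equal to $-u^0_{pyy}\ge0$ plus multiples of the Prandtl relation $-u^0_{pyy}+v^0_pu^0_{py}+u^0_pu^0_{px}=0$. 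Your outline does not contain either the quotient or the weighted multiplier, and without them the $\varepsilon$-independent $L^2$ control of $U,V$ is unavailable.
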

Let $\Phi$ be the stream-function of $U,V$, that is, $\Phi_X=-V, \Phi_Y=U$, then
\[
\left\{
\begin{aligned}
    &-\Delta\Phi=\partial_XV-\partial_YU,\\
    &\Phi|_{X=0}=\Phi|_{X=L}=\Phi|_{Y=0}=0.
\end{aligned}
\right.
\]
Because $[U,V]|_\Omega=0$ and $U_X+V_Y=0$,
$$
    \Phi_X|_{X=0}=\Phi_X|_{X=L}=\Phi_Y|_{Y=0}=0.
$$
We deduce the equation of stream function
\begin{equation}
\left\{
\begin{aligned}
&U_s\Delta\Phi_X-\Phi_X\Delta U_s-\e\Delta^2\Phi+V_s\Delta\Phi_Y-\Phi_Y\Delta V_s=\partial_Y F_1-\partial_X F_2,\\
&\Phi|_{X=0}=\Phi|_{X=L}=\Phi|_{Y=0}=\Phi_X|_{X=0}=\Phi_X|_{X=L}=\Phi_Y|_{Y=0}=0.
\end{aligned}
\right.
\end{equation}

Let $G=\frac{\Phi}{U_s}$, $G$ and $\Phi$ satisfies
\begin{equation}\label{G-Euq}
\left\{
\begin{aligned}
&\partial_{XX}[U_s^2G_X]+\partial_{XY}[U_s^2G_Y]-\e\Delta^2\Phi+R[\Phi]=\partial_Y F_1-\partial_X F_2,\\
&G|_{X=0}=G|_{X=L}=G|_{Y=0}=G_X|_{X=0}=G_X|_{X=L}=0.
\end{aligned}
\right.
\end{equation}
where $R[\Phi]=V_s\Delta\Phi_Y-U_{sX}\Delta\Phi-\Phi_Y\Delta V_s+\Phi\Delta U_{sX}$.
We define two norms of $G$
\begin{equation}\label{space XY}
\begin{aligned}
\|G\|_\X^2&:=\|U_sG_X\|^2+\|U_sG_Y\|^2+\e\{\|\sqrt U_sG_{XX}\sqrt\omega\|^2+2\|\sqrt U_sG_{XY}\sqrt\omega\|^2+\|\sqrt U_sG_{YY}\sqrt\omega\|^2\},\\
\|G\|_\Y^2&:=\e\{\|\sqrt U_sG_{XX}\|^2+2\|\sqrt U_sG_{XY}\|^2+\|\sqrt U_sG_{YY}\|^2\},
\end{aligned}
\end{equation}
where $\omega=L-X$.

Following the method of Guo and Iyer \cite{GI}, we have a Hardy-type's inequality: 
\begin{Lemma}\label{Hardy}
If $H\in W^{1,2}(0,\infty)$, $0<\xi\leqslant1$, \noindent then
$$ \|H\|_{L^2_Y}^2\leqslant C\xi\e\|\sqrt {U_s}H_Y\|_{L^2_Y}^2+\frac{C}{\xi^2}\|U_sH\|_{L^2_Y}^2.$$
\end{Lemma}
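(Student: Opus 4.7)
The strategy is to split the half-line $[0,\infty)$ at a well-chosen threshold $Y_\ast$ where $U_s(Y_\ast)$ is comparable to the small parameter $\xi$, and then to treat the ``outer'' region $Y\geqslant Y_\ast$ and the ``inner'' region $Y\leqslant Y_\ast$ by completely different mechanisms. By the structural asymptotics recalled in Remark \ref{Us}, $U_s \sim Y/\se$ for $Y\leqslant\se$ and $U_s \gtrsim 1$ for $Y\geqslant\se$, so choosing $Y_\ast := \xi\se$ (which is $\leqslant\se$ because $\xi\leqslant 1$) yields $U_s(Y_\ast)\sim\xi$ and $U_s(Y)\gtrsim\xi$ for all $Y\geqslant Y_\ast$.

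In the outer region, the lower bound $U_s(Y)\gtrsim\xi$ gives immediately $H^2 \leqslant C\xi^{-2} U_s^2 H^2$, and integration produces $\int_{Y_\ast}^\infty H^2\,dY \leqslant C\xi^{-2}\|U_s H\|_{L^2_Y}^2$, which is the second term on the right-hand side.

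The inner region is the substantive step. I would decompose $H(Y) = H(Y_\ast) + [H(Y)-H(Y_\ast)]$ on $[0,Y_\ast]$, so that $\int_0^{Y_\ast}H^2\,dY \leqslant 2Y_\ast H(Y_\ast)^2 + 2\int_0^{Y_\ast}|H(Y)-H(Y_\ast)|^2\,dY$. To estimate the point value $H(Y_\ast)$, I would average the fundamental theorem of calculus $H(Y_\ast)^2 = H(Z)^2 - 2\int_{Y_\ast}^Z HH_Y\,dY'$ over $Z\in[Y_\ast,2Y_\ast]$, obtaining
\begin{equation*}
Y_\ast H(Y_\ast)^2 \;\leqslant\; \int_{Y_\ast}^{2Y_\ast}H^2\,dZ \;+\; 2Y_\ast\int_{Y_\ast}^{2Y_\ast}|HH_Y|\,dY'.
\end{equation*}
On $[Y_\ast,2Y_\ast]$ one has $U_s\sim\xi$, so the first term is absorbed into $\xi^{-2}\|U_sH\|^2$, and the cross term is split by AM-GM: $2|HH_Y|\leqslant \beta H^2 + \beta^{-1}H_Y^2$ with the tuning $\beta := 1/(\xi\se)$. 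With this choice the $H^2$-piece again falls into the outer estimate (contributing $O(\xi^{-2})\|U_sH\|^2$), while the $H_Y^2$-piece is controlled using $U_s\sim\xi$ on the annulus, yielding exactly $O(\xi\e)\|\sqrt{U_s}H_Y\|^2$.

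The remaining piece $|H(Y)-H(Y_\ast)|^2$ is what really forces $\xi\e$ into the coefficient. I would apply weighted Cauchy--Schwarz to the representation $H(Y)-H(Y_\ast) = -\int_Y^{Y_\ast}H_{Y'}\,dY'$, inserting $\sqrt{U_s}\cdot 1/\sqrt{U_s}$:
\begin{equation*}
|H(Y)-H(Y_\ast)|^2 \;\leqslant\; \Bigl(\int_0^\infty U_s H_{Y'}^2\,dY'\Bigr)\cdot\int_Y^{Y_\ast}\frac{dY'}{U_s(Y')}.
\end{equation*}
On $[0,Y_\ast]$ we have $1/U_s\sim\se/Y'$, so $\int_Y^{Y_\ast}dY'/U_s \lesssim \se\log(Y_\ast/Y)$. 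Integrating in $Y$ over $[0,Y_\ast]$ turns the logarithm into $\int_0^{Y_\ast}\log(Y_\ast/Y)\,dY = Y_\ast$, producing the factor $\se\cdot Y_\ast = \xi\e$ as desired.

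The main obstacle is exactly this inner estimate: since $U_s$ vanishes at $Y=0$, no direct Poincar\'e/Hardy is available from $\int U_s H_Y^2$, so the $1/U_s$-weighted Cauchy--Schwarz has a logarithmically divergent kernel, and the gain hinges on this divergence being integrable in $Y$ so that the product $\se\cdot Y_\ast$ collapses to the right power $\xi\e$. Combining the outer estimate with the two inner contributions then yields the claimed bound.
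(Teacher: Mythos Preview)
Your argument is correct, but it follows a genuinely different route from the paper's. The paper uses a smooth cutoff $\chi(Y/(\xi\se))$ rather than a hard threshold, and for the inner piece it applies the classical Hardy trick of writing $\int H^2\chi^2 = \int (Y)' H^2\chi^2$ and integrating by parts once. This immediately produces a cross term $\int Y H H_Y\chi^2$ (bounded after Cauchy--Schwarz by $\int Y^2 H_Y^2\chi^2 \lesssim \xi\e\int U_s H_Y^2$, since $Y^2 \lesssim \xi\se\cdot\se U_s$ on $\mathrm{supp}\,\chi$) and a commutator term supported on the annulus where $\chi'\neq 0$, which is absorbed into $\xi^{-2}\int U_s^2 H^2$. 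No point-value estimate, no logarithmic kernel, and no separate treatment of $H(Y_\ast)$ is needed. Your decomposition $H = H(Y_\ast) + [H-H(Y_\ast)]$ followed by the $1/U_s$-weighted Cauchy--Schwarz is more hands-on: it makes explicit that the inner estimate is borderline (the kernel $\int_Y^{Y_\ast} dY'/U_s$ diverges logarithmically) and that the gain comes from the integrability of $\log(Y_\ast/Y)$ over $[0,Y_\ast]$. Both proofs give the same constants in $\xi$ and $\e$; the paper's is shorter and more in the spirit of standard Hardy inequalities, while yours isolates more transparently where the degeneracy of $U_s$ at $Y=0$ is felt.
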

\begin{proof}
Let $\chi$ be a smooth cut-off function supported in $[0,2]$ ,and $\chi|_{[0,1]}=1$,
$$\int H^2\lesssim\int H^2\chi^2(\frac{Y}{\xi\se})+\int H^2(1-\chi(\frac{Y}{\xi\se}))^2.$$
Recall the leading profile of $U_s$, $U_s\sim\frac{Y}{\se},$ if $Y\leqslant\se,$ $U_s\sim 1,$ if $Y\geqslant\se.$ So when $\frac{Y}{\se}\leqslant1$, $1-\chi(\frac{Y}{\xi\se})\lesssim\frac{Y}{\xi\se}\lesssim\frac{U_s}{\xi}$, and when $\frac{Y}{\se}\geqslant1$, $1-\chi(\frac{Y}{\xi\se})\lesssim1\lesssim\frac{U_s}{\xi}$. We have
$$\int H^2(1-\chi(\frac{Y}{\xi\se}))^2\lesssim\frac{1}{\xi^2}\int U_s^2H^2,$$
and
\[
\begin{aligned}
\int H^2\chi^2(\frac{Y}{\xi\se})&=-2\int YHH_Y\chi^2(\frac{Y}{\xi\se})-2\int\frac{Y}{\xi\se}H^2\chi'(\frac{Y}{\xi\se})\chi(\frac{Y}{\xi\se})\\
                               &\lesssim\int Y^2H_Y^2\chi^2(\frac{Y}{\xi\se})+\int({\frac{Y}{\xi\se}})^2H^2|\chi'(\frac{Y}{\xi\se})|\chi(\frac{Y}{\xi\se})\\
                               &\lesssim\xi\e\int U_sH_Y^2+\frac{1}{\xi^2}\int U_s^2H^2.
\end{aligned}
\]
The proof is complete.
\end{proof}

\begin{Lemma}\label{routine}
Let $G$ is the solution of equation (\ref{G-Euq}), $L>0$, then
\begin{align}\label{rou}
\|G\|_\Y^2\lesssim\|G\|_\X^2+|\z\partial_YF_1-\partial_XF_2,g\y|.
\end{align}
\end{Lemma}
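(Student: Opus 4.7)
The plan is to derive (\ref{rou}) by the standard energy method: test equation (\ref{G-Euq}) with $g=G$, integrate over $\Omega$, and integrate by parts repeatedly. The homogeneous boundary conditions $G|_{X=0,L}=G|_{Y=0}=0$, $G_X|_{X=0,L}=0$, together with the corresponding conditions $\Phi|_{X=0,L}=\Phi|_{Y=0}=\Phi_X|_{X=0,L}=\Phi_Y|_{Y=0}=0$ inherited from $\Phi=U_sG$ and the no-slip condition $[U,V]|_{\partial\Omega}=0$, guarantee that every boundary integral generated by the IBP vanishes. The right-hand side of the resulting identity is exactly $\langle\partial_YF_1-\partial_XF_2,g\rangle$, so the task reduces to showing that the left-hand side equals $\|G\|_\Y^2$ modulo terms bounded by $\|G\|_\X^2$.

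Writing $\Phi=U_sG$, the biharmonic contribution $-\e\int\Delta^2\Phi\cdot G$ equals, after two integrations by parts, $-\e\int\Delta\Phi\cdot\Delta G=-\e\int U_s(\Delta G)^2+\text{l.o.t.}$; moved to the right-hand side this produces $\e\int U_s(\Delta G)^2$, and a further IBP converts $\int 2U_sG_{XX}G_{YY}$ into $\int 2U_sG_{XY}^2$ up to commutators carrying one derivative of $U_s$, yielding the full $\|G\|_\Y^2=\e\int U_s(G_{XX}^2+2G_{XY}^2+G_{YY}^2)$. All commutators from expanding $\Delta^2(U_sG)$ contain at most one factor of $\partial^jU_s$ paired with second derivatives of $G$; using the scaled bounds $\e^{i/2}\|Y^j\partial_Y^{i+j}U_s\|_\infty\lesssim1$ from Remark \ref{Us} and invoking Lemma \ref{Hardy} to convert unweighted norms of $G$ into $U_s$-weighted ones, each is absorbed into $\tfrac12\|G\|_\Y^2+O(\|G\|_\X^2)$. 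The transport terms $\partial_{XX}[U_s^2G_X]$ and $\partial_{XY}[U_s^2G_Y]$ paired with $G$ integrate by parts to expressions of the type $-\int U_sU_{sX}G_X^2$, $-\int U_sU_{sY}G_XG_Y$, which are bounded by $\xi\|G\|_\Y^2+\xi^{-2}\|G\|_\X^2$ via Lemma \ref{Hardy}. The Rayleigh-type remainder $R[\Phi]=V_s\Delta\Phi_Y-U_{sX}\Delta\Phi-\Phi_Y\Delta V_s+\Phi\Delta U_{sX}$ is handled in the same spirit, the key smallness coming from $V_s=O(\sqrt\e)$ together with the weighted decay of derivatives of $U_s$ and $V_s$.

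The main obstacle is bookkeeping inside the Prandtl layer $Y\lesssim\sqrt\e$, where $\partial_YU_s=O(\e^{-1/2})$, so each integration by parts that moves a derivative across $U_s$ threatens to lose a factor of $\e^{-1/2}$. The remedy is the simultaneous use of three ingredients: the scaled derivative bounds of Remark \ref{Us}, which supply a compensating $\sqrt\e$ whenever a $\partial_Y^mU_s$ appears; the Hardy-type Lemma \ref{Hardy}, which trades an unweighted factor of $G$ or $\nabla G$ for a $U_s$-weighted one at the cost of the small parameter $\xi$; and the weight $\omega=L-X$ built into $\|G\|_\X$, which provides the slack required to absorb the $X$-commutator terms arising from $\partial_{XX}[U_s^2G_X]$ and from the test function interacting with $U_{sX}$. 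Choosing $\xi$ small enough to absorb the $\xi\|G\|_\Y^2$ contributions back into the left-hand side then produces (\ref{rou}).
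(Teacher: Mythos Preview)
Your approach is essentially the paper's: test (\ref{G-Euq}) with $-G$, integrate by parts, extract $\|G\|_\Y^2$ from the bi-Laplacian, and control all commutators by $\|G_X\|^2+\|G_Y\|^2$, then invoke Lemma~\ref{Hardy} with a small parameter $\xi$ to absorb the $\xi\|G\|_\Y^2$ piece. Three points need correcting, none fatal.

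First, it is not true that every boundary integral vanishes. Since $G_Y|_{Y=0}\neq 0$ in general (only $G|_{Y=0}=0$ holds), the integration by parts of the $\Phi_{YYYY}$ term leaves the trace contribution $\e\langle U_{sY}G_Y,G_Y\rangle_{Y=0}$. This term is nonnegative because $U_{sY}|_{Y=0}>0$, so it can simply be discarded from the left-hand side; the paper keeps track of it explicitly.

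Second, your claim $V_s=O(\sqrt\e)$ is false in the non-shear setting of Theorem~\ref{main1}, where $V_s=v^0_e+O(\sqrt\e)$ with $v^0_e$ merely small in $L^\infty$. Fortunately this lemma does not require any smallness of $V_s$: the $R[\Phi]$ terms are bounded by $O(\|G_X\|^2+\|G_Y\|^2)$ using only the pointwise estimate $\|V_sU_{sY}\|_\infty\lesssim\|\tfrac{v^0_e}{Y}\|_\infty\|YU_{sY}\|_\infty+1\lesssim 1$, after which Lemma~\ref{Hardy} closes as before.

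Third, the weight $\omega=L-X$ plays no role here; it enters only in Lemmas~\ref{important1} and~\ref{important2}, where one tests with $-G\omega$ to generate the positive first-order terms $\|U_sG_X\|^2,\|U_sG_Y\|^2$. In the present lemma the transport terms $\langle\partial_{XX}[U_s^2G_X],-G\rangle$ and $\langle\partial_{XY}[U_s^2G_Y],-G\rangle$ reduce directly to $\langle U_sU_{sX}G_X,G_X\rangle$ and $\langle U_sU_{sX}G_Y,G_Y\rangle$, which are $O(\|G\|_\X^2)$ outright since $|U_{sX}|\lesssim U_s$; no Hardy and no $\omega$ are needed for them.
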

\begin{proof}
Taking the inner product of $(\ref{G-Euq})_1$ and $-G$.

First term
\begin{align}\label{a1}
\begin{aligned}
\z\partial_{XX}[U_s^2G_X],-G\y=&\z\partial_X[U_s^2G_X],G_X\y=\z U_sU_{sX}G_X,G_X\y\\
                                      =&O\big(\| U_sG_X\|^2\big)=O\big(\|G\|_\X^2\big).
\end{aligned}
\end{align}
Second term
\begin{align}\label{a2}
\begin{aligned}
\z\partial_{XY}[U_s^2G_Y],-G\y=&\z\partial_X[U_s^2G_Y],G_Y\y=\z U_sU_{sX}G_Y,G_Y\y\\
                              =&O\big(\|U_sG_Y\|^2\big)=O\big(\|G\|_\X^2\big).
\end{aligned}
\end{align}

Bi-Laplacian term
\[
\z-\e\Delta^2\Phi,-G\omega\y=\e\z\Phi_{XXXX}+2\Phi_{XXYY}+\Phi_{YYYY},G\omega\y.
\]
\[
\begin{split}
\e\z\Phi_{XXXX},G\y=&-\e\z\Phi_{XXX},G_X\y=\e\z\Phi_{XX},G_{XX}\y\\
                         =&\e\z U_sG_{XX}+2U_{sX}G_X+U_{sXX}G,G_{XX}\y\\
                         =&\e\z U_sG_{XX},G_{XX}\y-\e\z2U_{sXX}G_X+U_{sXXX}G,G_X\y,\\
                         =&\e\z U_sG_{XX},G_{XX}\y+O\big(\e\|G_X\|^2\big),
\end{split}
\]
 let $0<\xi\leqslant1$ be choosen latter, by lemma \ref{Hardy},
\[
\begin{split}
\|G_X\|^2\lesssim&\frac{1}{\xi^2}\|U_sG_X\|^2+\xi\e\|\sqrt{U_s}G_{XY}\|^2\\
    \lesssim&\frac{1}{\xi^2}\|G\|^2_{\X}+\xi\|G\|^2_\Y,
\end{split}
\]
then
\begin{align}\label{a3.1}
\begin{aligned}
\e\z\Phi_{XXXX},G\y=\e\z U_sG_{XX},G_{XX}\y+O\big(\frac{\e}{\xi^2}\|G\|^2_{\X}+\xi\e\|G\|^2_\Y\big).
\end{aligned}
\end{align}
Next
\[
\begin{split}
\z-2\e\Phi_{XXYY},-G\y=&-\z2\e\Phi_{XXY},G_Y\y=\z2\e\Phi_{XY},G_{XY}\y\\
                      =&2\e\z U_sG_{XY}+U_{sX}G_Y+U_{sY}G_X+U_{sXY}G,G_{XY}\y,\\
                      =&2\e\z U_sG_{XY},G_{XY}\y-\e\z U_{sXX}G_Y,G_Y\y-\e\z U_{sYY}G_X,G_X\y\\
      &-2\e\z U_{sXY}G_Y,G_X\y-2\e\z U_{sXYY}G,G_X\y\\
=&2\e\z U_sG_{XY},G_{XY}\y+O\big(\|G_X\|^2+\|G_Y\|^2\big),
\end{split}
\]
 by Lemma \ref{Hardy},
\[
\begin{split}
\|G_Y\|^2\lesssim&\frac{1}{\xi^2}\|U_sG_Y\|^2+\xi\e\|\sqrt{U_s}G_{YY}\|^2\\
    \lesssim&\frac{1}{\xi^2}\|G\|^2_{\X}+\xi\|G\|^2_\Y,
\end{split}
\]
then
\begin{align}\label{a3.2}
\begin{aligned}
\z-2\e\Phi_{XXYY},-G\y=2\e\z U_sG_{XY},G_{XY}\y+O\big(\frac{1}{\xi^2}\|G\|^2_{\X}+\xi\|G\|^2_\Y\big).
\end{aligned}
\end{align}
\begin{align}\label{a3.3}
\begin{aligned}
\z-\e\Phi_{YYYY},-G\y=&-\e\z\Phi_{YYY},G_Y\y=\e\z\Phi_{YY},G_Y\y|_{Y=0}+\e\z\Phi_{YY},G_{YY}\y\\
                     =&\e\z U_sG_{YY}+2U_{sY}G_Y+U_{sYY}G,G_Y\y|_{Y=0}\\
                      &+\e\z U_sG_{YY}+2U_{sY}G_Y+U_{sYY}G,G_{YY}\y\\
                     =&2\e\z U_{sY}G_Y,G_Y\y|_{Y=0}+\e\z U_sG_{YY},G_{YY}\y+\e\z2U_{sY}G_Y+U_{sYY}G,G_{YY}\y\\
                     =&\e\z U_{sY}G_Y,G_Y\y|_{Y=0}+\e\z U_sG_{YY},G_{YY}\y-\e\z2U_{sYY}G_Y+U_{sYYY}G,G_Y\y\\
   =&\e\z U_{sY}G_Y,G_Y\y|_{Y=0}+\e\z U_sG_{YY},G_{YY}\y\\
    &+O\big(\|G_Y\|^2+\e\|YG_{YYY}\|_\infty\|\frac{G}{Y}\|\|G_Y\|\big)\\
=&\e\z U_{sY}G_Y,G_Y\y|_{Y=0}+\e\z U_sG_{YY},G_{YY}\y+O\big(\|G_Y\|^2\big)\\
=&\e\z U_{sY}G_Y,G_Y\y|_{Y=0}+\e\z U_sG_{YY},G_{YY}\y+O\big(\frac{1}{\xi^2}\|G\|^2_{\X}+\xi\|G\|^2_\Y).
\end{aligned}
\end{align}
Since $\e\z U_{sY}G_Y,G_Y\y|_{Y=0}$ is positive, then $\e\z U_sG_{YY},G_{YY}\y$ is which we want. According to the fact
\[
\begin{split}
 &\|V_sU_{sY}\|_\infty\lesssim\|v^0_eU_{sY}\|_\infty+1\lesssim\|\frac{v^0_e}{Y}\|_\infty\|YU_{sY}\|_\infty+1\lesssim1,\\
  &\|\Phi_X\|=\|U_sG_X+U_{sX}G\|\lesssim\|G_X\|,\\
&\|\Phi_Y\|=\|U_sG_Y+U_{sY}G\|\lesssim\|G_Y\|+\|U_{sY}Y\|_{\infty}\|\frac{G}{Y}\|\lesssim\|G_Y\|.
\end{split}
\]
The $R[\Phi]$ term can be estimated as
\begin{align}\label{a4}
\begin{aligned}
\z V_s\Phi_{XXY},-G\y=&\z V_s\Phi_{XY},G_X\y+\z V_{sX}\Phi_{XY},G\y\\
                     =&\z V_s(U_sG_{XY}+U_{sX}G_Y+U_{sY}G_X+U_{sXY}G),G_X\y\\
                      &-\z V_{sX}\Phi_X,G_Y\y-\z V_{sXY}\Phi_X,G\y\\
                     =&-\frac{1}{2}\z(V_sU_s)_YG_X,G_X\y+\z V_s(U_{sX}G_Y+U_{sY}G_X+U_{sXY}G),G_X\y\\
                      &-\z V_{sX}\Phi_X,G_Y\y-\z V_{sXY}\Phi_X,G\y\\
                     =&O\big(\|G_X\|^2+\|G_Y\|^2\big)=O\big(\frac{1}{\xi^2}\|G\|^2_{\X}+\xi\|G\|^2_\Y).
\end{aligned}
\end{align}
\begin{align}\label{a5}
\begin{aligned}
\z V_s\Phi_{YYY},-G\y=&\z V_s\Phi_{YY},G_Y\y+\z V_{sY}\Phi_{YY},G\y\\
                     =&\z V_s(U_sG_{YY}+2U_{sY}G_Y+U_{sYY}G),G_Y\y\\
                       &-\z V_{sY}\Phi_Y,G_Y\y-\z V_{sYY}\Phi_Y,G\y\\
                     =&-\frac{1}{2}\z (V_sU_s)_YG_Y,G_Y\y+\z V_s(2U_{sY}G_Y+U_{sYY}G),G_Y\y\\
                      &-\z V_{sY}\Phi_Y,G_Y\omega\y-\z V_{sYY}\Phi_Y,G\y\\
                     =&O\big(\|G_Y\|^2\big)=O\big(\frac{1}{\xi^2}\|G\|^2_{\X}+\xi\|G\|^2_\Y).
\end{aligned}
\end{align}
\begin{align}\label{a6}
\begin{aligned}
\z -U_{sX}\Delta\Phi,-G\y=&-\z U_{sX}\Phi_X,G_X\y-\z U_{sXX}\Phi_X,G\y\\
               &-\z U_{sX}\Phi_Y,G_Y\y-\z U_{sXY}\Phi_Y,G\y\\
                           =&O\big(\|G_X\|^2+\|G_Y\|^2\big)=O\big(\frac{1}{\xi^2}\|G\|^2_{\X}+\xi\|G\|^2_\Y).
\end{aligned}
\end{align}
\begin{align}\label{a7}
\begin{aligned}
\z-\Phi_Y\Delta V_s+\Phi\Delta U_{sX},-G\y=&O\big(\|G_X\|^2+\|G_Y\|^2\big)\\
            =&O\big(\frac{1}{\xi^2}\|G\|^2_{\X}+\xi\|G\|^2_\Y).
\end{aligned}
\end{align}
Collecting (\ref{a1})-(\ref{a7}), choosing $\xi$ small enough, we can obtain the inequality (\ref{rou}).
\end{proof}

\begin{Lemma}\label{important1}
Let $G$ is the solution of equation (\ref{G-Euq}), $0<L,\|v^0_e\|_\infty\ll1$, then
\begin{align}
\|G\|_\X^2\lesssim (\sqrt\e+L+\|v^0_e\|_{\infty})\|G\|_\Y^2+|\z\partial_YF_1-\partial_XF_2,G\omega\y|.
\end{align}
\end{Lemma}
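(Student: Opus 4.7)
The plan is to test the stream-function equation $(\ref{G-Euq})_1$ against the weighted multiplier $-G\omega$, with $\omega=L-X$, which is the natural dual of the norm $\|G\|_\X$. The boundary conditions ensure that every integration by parts in $X$ has vanishing boundary contributions (from $G|_{X=0}=G|_{X=L}=0$, $G_X|_{X=0}=G_X|_{X=L}=0$, together with $\omega|_{X=L}=0$), and in $Y$ the no-slip relations $U_s|_{Y=0}=V_s|_{Y=0}=0$ together with $G|_{Y=0}=0$ kill all remaining boundary terms. The only surviving $Y=0$ contribution is $2\e\langle U_{sY}G_Y^2,\omega\rangle_{Y=0}$, which is nonnegative since $U_{sY}|_{Y=0}>0$ and can simply be kept on the left.

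For the two convective terms I would use $\partial_X(G\omega)=G_X\omega-G$ and two integrations by parts in $X$ to extract the positive leading pieces $\tfrac{3}{2}\|U_sG_X\|^2$ and $\tfrac{1}{2}\|U_sG_Y\|^2$; the residual commutator terms $\langle U_sU_{sX}G_*^2,\omega\rangle$ are of size $L\|U_sG_*\|^2$ and absorbable under $L\ll 1$. For the biharmonic $-\e\Delta^2\Phi$, expanding $\Phi=U_sG$ and integrating by parts four times produces the leading positive quantity $\e\{\|\sqrt{U_s}G_{XX}\sqrt\omega\|^2+2\|\sqrt{U_s}G_{XY}\sqrt\omega\|^2+\|\sqrt{U_s}G_{YY}\sqrt\omega\|^2\}$, which together with the convective part reconstitutes $\|G\|_\X^2$. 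The remaining lower-order errors have the schematic form $\e\|G_X\|^2+\e\|G_Y\|^2$; Lemma \ref{Hardy} with a free parameter $\xi$ converts these into $\e\xi^{-2}\|G\|_\X^2+\xi\e\|G\|_\Y^2$, and a suitable small choice of $\xi$ renders the first part absorbable and the second of the desired $O(\se)\|G\|_\Y^2$ type.

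The crux, and what I expect to be the main obstacle, is the Rayleigh-type piece $\langle R[\Phi],-G\omega\rangle$ with $R[\Phi]=V_s\Delta\Phi_Y-U_{sX}\Delta\Phi-\Phi_Y\Delta V_s+\Phi\Delta U_{sX}$. The dangerous summand is $\langle V_s\Delta\Phi_Y,-G\omega\rangle$, which carries third derivatives of $\Phi$; the smallness must be extracted from Remark \ref{Us}, namely $\|V_s\|_\infty\lesssim\|v^0_e\|_\infty+\se$. Using $V_s|_{Y=0}=0$, a sequence of IBPs in $Y$ and $X$ reduces this term to integrals of the form $\int V_sU_sG_{**}G_*\omega$ plus lower order, and Cauchy--Schwarz together with the bound $\|\sqrt{U_s}G_{**}\|\leq\|G\|_\Y/\sqrt\e$, the trivial $\sqrt\omega\leq\sqrt{L}$, and Lemma \ref{Hardy} (used to convert $\|\sqrt{U_s}G_*\|$-type norms into $\|G\|_\X$ and $\|G\|_\Y$) yields an overall bound of the form $(\se+L+\|v^0_e\|_\infty)\|G\|_\Y^2$ plus absorbable pieces. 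The three remaining summands of $R[\Phi]$ carry only $O(1)$ coefficients acting on at most second derivatives of $\Phi$ and are estimated by the same IBP/Hardy/Cauchy--Schwarz combination. Summing all the contributions and absorbing the $O(L+\se)\|G\|_\X^2$ errors into the left side using $L,\se\ll 1$ then yields the claimed inequality.
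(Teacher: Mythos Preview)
Your overall strategy---test $(\ref{G-Euq})_1$ against $-G\omega$ with $\omega=L-X$, extract $\tfrac{3}{2}\|U_sG_X\|^2+\tfrac{1}{2}\|U_sG_Y\|^2$ from the convective terms and the weighted second-derivative norms from the biharmonic, then close with Lemma~\ref{Hardy}---is exactly what the paper does. But your accounting of the error terms has a real gap.

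You assert that the biharmonic commutators are of ``schematic form $\e\|G_X\|^2+\e\|G_Y\|^2$'' and that the last three pieces of $R[\Phi]$ carry ``only $O(1)$ coefficients.'' Neither is true: by Remark~\ref{Us} one has $\|U_{sYY}\|_\infty\sim\e^{-1}$, $\|U_{sXYY}\|_\infty\sim\e^{-1}$, $\|V_{sYY}\|_\infty\sim\e^{-1/2}$, etc. Hence terms like $-\e\langle U_{sYY}G_X,G_X\omega\rangle$ and $-2\e\langle U_{sYY}G_Y,G_Y\omega\rangle$ arising from $\Phi_{XXYY}$ and $\Phi_{YYYY}$ are of size $\|G_X\sqrt\omega\|^2$, $\|G_Y\sqrt\omega\|^2$, not $\e\|G_X\|^2$, $\e\|G_Y\|^2$. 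The same issue hits $\langle\Phi_Y\Delta V_s,G\omega\rangle$ and $\langle\Phi\Delta U_{sX},G\omega\rangle$. With your stated error form, Hardy would give $\e\xi^{-2}\|G\|_\X^2+\e\xi\|G\|_\Y^2$, and any $\xi\in(0,1]$ closes trivially---but that is because you have lost a factor $\e^{-1}$.

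The correct mechanism, and what the paper actually does, is to keep the $\omega$ weight through Lemma~\ref{Hardy}: $\|G_*\sqrt\omega\|^2\lesssim\xi\e\|\sqrt{U_s}G_{*Y}\sqrt\omega\|^2+\xi^{-2}\|U_sG_*\sqrt\omega\|^2\lesssim(\xi+L\xi^{-2})\|G\|_\X^2$, so that the second-derivative piece lands in $\|G\|_\X$, not $\|G\|_\Y$. The unweighted errors (coming from $\partial_X\omega=-1$) are handled separately and produce the $\se$ and $\|v^0_e\|_\infty$ contributions. One then chooses $\xi=(\se+L+\|v^0_e\|_\infty)^{1/4}$ to make both $\xi$ and $(\se+L+\|v^0_e\|_\infty)\xi^{-2}$ small. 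Also, for $\langle V_s\Delta\Phi_Y,-G\omega\rangle$ the paper does \emph{not} stop at $\int V_sU_sG_{**}G_*\omega$ and invoke $\|\sqrt{U_s}G_{**}\|\le\e^{-1/2}\|G\|_\Y$; it integrates by parts once more (e.g.\ $\langle V_sU_sG_{XY},G_X\omega\rangle=-\tfrac12\langle(V_sU_s)_YG_X,G_X\omega\rangle$) to remove the second derivative entirely, which avoids introducing a spurious $\e^{-1/2}$ loss.
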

\begin{proof}
Taking the inner product of $(\ref{G-Euq})_1$ with $-G\omega$, where $\omega=L-x$.

Because $|U_{sX}|\lesssim U_s$ and $\omega\leqslant L$, the first term is
\begin{align}\label{b1}
\begin{aligned}
\z\partial_{XX}[U_s^2G_X],-G\omega\y=&-\z\partial_X[U_s^2G_X],G\y+\z\partial_X[U_s^2G_X],G_X\omega\y\\
                                    =&\frac{3}{2}\z U_s^2G_X,G_X\y+\z U_sU_{sX}G_X,G_X\omega\y\\
                                    =&\frac{3}{2}\z U_s^2G_X,G_X\y+O\big(L\|U_sG_X\|^2\big).
\end{aligned}
\end{align}
Second term
\begin{align}\label{b2}
\begin{aligned}
\z\partial_{XY}[U_s^2G_Y],-G\omega\y=&\z\partial_X[U_s^2G_Y],G_Y\omega\y\\
                                  =&\z U_s^2G_{XY},G_Y\omega\y+\z2U_sU_{sX}G_Y,G_Y\omega\y\\
                                  =&\frac{1}{2}\z U_s^2G_Y,G_Y\y+\z U_sU_{sX}G_Y,G_Y\omega\y\\
                =&\frac{1}{2}\z U_s^2G_Y,G_Y\y+O\big(L\|U_sG_Y\|^2\big).
\end{aligned}
\end{align}
Bi-Laplacian term
\[
\z-\e\Delta^2\Phi,-G\omega\y=\e\z\Phi_{XXXX}+2\Phi_{XXYY}+\Phi_{YYYY},G\omega\y.
\]
\begin{align}\label{b3.1}
\begin{aligned}
\e\z\Phi_{XXXX},G\omega\y=&-\e\z\Phi_{XXX},G_X\omega\y+\e\z\Phi_{XXX},G\y\\
                         =&\e\z\Phi_{XX},G_{XX}\omega\y-2\e\z\Phi_{XX},G_X\y\\
                         =&\e\z U_sG_{XX}+2U_{sX}G_X+U_{sXX}G,G_{XX}\omega\y\\
                          &-2\e\z U_sG_{XX}+2U_{sX}G_X+U_{sXX}G,G_X\y\\
                         =&\e\z U_sG_{XX},G_{XX}\omega\y-\e\z 2U_{sX}G_X+U_{sXX}G,G_X\y\\
                          &-\e\z2U_{sXX}G_X+U_{sXXX}G,G_X\omega\y\\
                =&\e\z U_sG_{XX},G_{XX}\omega\y+O\big(\e\|G_X\|\big)\\
               =&\e\z U_sG_{XX},G_{XX}\omega\y+O\big(\frac{\e}{\xi^2}\|G\|^2_{\X}+\xi\e\|G\|^2_\Y).
\end{aligned}
\end{align}
Next
\[
\begin{split}
2\e\z\Phi_{XXYY},G\omega\y=&-2\e\z\Phi_{XXY},G_Y\omega\y=2\e\z\Phi_{XY},G_{XY}\omega\y-2\e\z\Phi_{XY},G_Y\y\\
                          =&2\e\z U_sG_{XY}+U_{sX}G_Y+U_{sY}G_X+U_{sXY}G,G_{XY}\omega\y\\
                           &-2\e\z U_sG_{XY}+U_{sX}G_Y+U_{sY}G_X+U_{sXY}G,G_Y\y.
\end{split}
\]
And $2\e\z U_sG_{XY},G_{XY}\omega\y$ is good,
\[
\begin{split}
  &2\e\z U_{sX}G_Y+U_{sY}G_X+U_{sXY}G,G_{XY}\omega\y\\
 =&-\e\z U_{sXX}G_Y,G_Y\omega\y+\e\z U_{sX}G_Y,G_Y\y-\e\z U_{sYY}G_X,G_X\omega\y\\
  &-2\e\z U_{sXY}G_Y,G_X\omega\y-2\e\z U_{sXYY}G,G_Y\omega\y\\
 =&O\big(\e\|G_Y\|^2+\e\|U_{sYY}\|_\infty\|G_X\sqrt\omega\|^2\\
  &+\e\|U_{sXY}\|_\infty\|G_X\|\|G_Y\|+\e\|YU_{sXYY}\|_\infty\|\frac{G}{Y}\|\|G\|\big)\\
 =&O\big(\|G_X\sqrt\omega\|^2+\se\|G_X\|\|G_Y\|+\se\|G_Y\|^2\big),
\end{split}
\]
by Lemma \ref{Hardy}, then
\[
\begin{split}
&\|G_Y\|^2\lesssim\xi\e\|\sqrt {U_s}G_{YY}\|^2+\frac{1}{\xi^2}\|U_sG_Y\|^2\lesssim\frac{1}{\xi^2}\|G\|^2_\X+\xi\|G\|^2_\Y\\
&\|G_X\|^2\lesssim\xi\e\|\sqrt {U_s}G_{XY}\|^2+\frac{1}{\xi^2}\| U_sG_X\|^2\lesssim\frac{1}{\xi^2}\|G\|^2_\X+\xi\|G\|^2_\Y\\
&\| G_X\sqrt\omega\|^2\lesssim\xi\e\|\sqrt { U_s} G_{XY}\sqrt\omega\|^2+\frac{1}{\xi^2}\| U_sG_X\sqrt\omega\|^2\lesssim(\xi+\frac{L}{\xi^2})\|G\|^2_\X,
\end{split}
\]
so we have
\[
2\e\z U_{sX}G_Y+U_{sY}G_X+U_{sXY}G,G_{XY}\omega\y=O\big((\frac{\sqrt\e+L}{\xi^2}+\xi)\|G\|_\X^2+\se\xi\|G\|_\Y^2\big),
\]
and
\[
\begin{split}
&-2\e\z U_sG_{XY}+U_{sX}G_Y+U_{sY}G_X+U_{sXY}G,G_Y\y\\
=&-\e\z U_{sX}G_Y,G_Y\y-2\e\z U_{sY}G_Y,G_X\y-2\e\z U_{sXY}G,G_Y\y\\
=&O\big(\e\|G_Y\|^2+\e\|U_{sY}\|_\infty\|G_X\|\|G_Y\|+\e\|U_{sXY}Y\|_\infty\|\frac{G}{Y}\|\|G_Y\|\big)\\
=&O\big(\e\|G_Y\|^2+\se\|G_X\|\|G_Y\|\big)\\
=&O\big(\frac{\se}{\xi^2}\|G\|_\X^2+\sqrt\e\xi\|G\|_\Y^2\big).
\end{split}
\]
Therefore
\begin{align}\label{b3.2}
\begin{aligned}
2\e\z\Phi_{XXYY},G\omega\y=&2\e\z U_sG_{XY},G_{XY}\omega\y+O\big((\frac{\sqrt\e+L}{\xi^2}+\xi)\|G\|_\X^2+\se\xi\|G\|_\Y^2\big).
\end{aligned}
\end{align}
Integrating by parts, we have
\[
\begin{split}
\e\z\Phi_{YYYY},G\omega\y=&-\e\z\Phi_{YYY},G_Y\omega\y=\e\z \Phi_{YY},G_Y\omega\y|_{Y=0}+\e\z\Phi_{YY},G_{YY}\omega\y\\
                         =&\e\z U_sG_{YY}+2U_{sY}G_Y+U_{sYY}G,G_Y\omega\y|_{Y=0}\\
                          &+\e\z U_sG_{YY}+2U_{sY}G_Y+U_{sYY}G,G_{YY}\omega\y\\
                         =&2\e\z U_{sY}G_Y,G_Y\omega\y|_{Y=0}+\e\z U_sG_{YY},G_{YY}\omega\y+\e\z2U_{sY}G_Y+U_{sYY}G,G_{YY}\omega\y\\
                         =&\e\z U_{sY}G_Y,G_Y\omega\y|_{Y=0}+\e\z U_sG_{YY},G_{YY}\omega\y-\e\z 2U_{sYY}G_Y+U_{sYYY}G,G_Y\omega\y.
\end{split}
\]
Because $U_{sY}|_{Y=0}>0$, the first two terms are positive above, and
\[
\begin{split}
-\e\z 2U_{sYY}G_Y+U_{sYYY}G,G_Y\omega\y=&O\big(\e\|U_{sYY}\|_\infty\|G_Y\sqrt\omega\|^2+\e\|U_{sYYY}Y\|_\infty\|\frac{G}{Y}\sqrt\omega\|\|G_Y\sqrt\omega\|\big)\\
=&O\big((\xi+\frac{L}{\xi^2})\|G\|^2_\X\big),
\end{split}
\]
then we obtain
\begin{align}\label{b3.3}
\begin{aligned}
\e\z\Phi_{YYYY},G\omega\y=&
\e\z U_{sY}G_Y,G_Y\omega\y|_{Y=0}+\e\z U_sG_{YY},G_{YY}\omega\y+O\big((\xi+\frac{L}{\xi^2})\|G\|^2_\X\big).
\end{aligned}
\end{align}
Finally, we deal with $R[\Phi]$ term. Since
\[
\begin{split}
\z V_s\Phi_{XXY},-G\omega\y=&\z V_s\phi_{XY},G_X\omega\y+\z V_{sX}\Phi_{XY},G\omega\y-\z V_s\Phi_{XY},G\y\\
                           =&\z V_s(U_sG_{XY}+U_{sX}G_Y+U_{sY}G_X+U_{sXY}G),G_X\omega\y\\
                            &-\z V_{sX}\Phi_X,G_Y\omega\y-\z V_{sXY}\Phi_X,G\omega\y+\z V_s\Phi_X,G_Y\y+\z V_{sY}\Phi_X,G\y\\
                           =&-\frac{1}{2}\z(V_sU_s)_YG_X,G_X\omega\y+\z V_s(U_{sX}G_Y+U_{sY}G_X+U_{sXY}G),G_X\omega\y\\
                            &-\z V_{sX}\Phi_X,G_Y\omega\y-\z V_{sXY}\Phi_X,G\omega\y+\z V_s\Phi_X,G_Y\y+\z V_{sY}\Phi_X,G\y.
\end{split}
\]
Notice that
\[
\begin{split}
&\|V_sU_{sY}\|_{\infty}\lesssim\frac{\|v^0_eu^0_{by}\|_{\infty}}{\se}+1\lesssim\|\frac{v^0_e}{Y}\|_{\infty}\|yu^0_{by}\|_{\infty}+1\lesssim1,\\
&\|V_s\|_{\infty}\lesssim\|v^0_e\|_{\infty}+\se,
\end{split}
\]
we obtain
\begin{align}\label{b4}
\begin{aligned}
\z V_s\Phi_{XXY},-G\omega\y=&O\big(\|G_X\sqrt\omega\|^2+\|G_Y\sqrt\omega\|^2+\se\|G_X\|\|G_Y\|+\|v^0_e\|_{\infty}\|G_X\|\|G_Y\|\big)\\
                           =&O\big((\frac{\sqrt\e+L+\|v^0_e\|_{\infty}}{\xi^2}+\xi)\|G\|_\X^2+\xi(\sqrt\e+\|v^0_e\|_{\infty})\|G\|_\Y^2\big).
\end{aligned}
\end{align}
Similarly,
\begin{align}\label{b5}
\begin{aligned}
\z V_s\Phi_{YYY},-G\omega\y=&\z V_s\Phi_{YY},G_Y\omega\y+\z V_{sY}\Phi_{YY},G\omega\y\\
                           =&\z V_s(U_sG_{YY}+2U_{sY}G_Y+U_{sYY}G),G_Y\omega\y\\
                            &-\z V_{sY}\Phi_Y,G_Y\omega\y-\z V_{sYY}\Phi_Y,G\omega\y\\
                           =&-\frac{1}{2}\z (V_sU_s)_YG_Y,G_Y\omega\y+\z V_s(2U_{sY}G_Y+U_{sYY}G),G_Y\omega\y\\
                            &-\z V_{sY}\Phi_Y,G_Y\omega\y-\z V_{sYY}\Phi_Y,G\omega\y\\
                           =&O\big(\|G_Y\sqrt\omega\|^2+\|\Phi_Y\sqrt\omega\|^2\big)\\
                           =&O\big((\xi+\frac{L}{\xi^2})\|G\|^2_\X\big).
\end{aligned}
\end{align}
\begin{align}\label{b6}
\begin{aligned}
\z -U_{sX}\Delta\Phi,-G\omega\y=&-\z U_{sX}\Phi_X,G_X\omega\y-\z U_{sXX}\Phi_X,G\omega\y+\z U_{sX}\Phi_X,G\y\\
                                &-\z U_{sX}\Phi_Y,G_Y\omega\y-\z U_{sXY}\Phi_Y,G\omega\y\\
                                =&O\big(\|G_X\sqrt\omega\|^2+\|G_Y\sqrt\omega\|^2\big)\\
                                =&O\big((\xi+\frac{L}{\xi^2})\|G\|^2_\X\big).
\end{aligned}
\end{align}
\begin{align}\label{b7}
\begin{aligned}
\z-\Phi_Y\Delta V_s+\Phi\Delta U_{sX},-G\omega\y=&O\big(\|G_X\sqrt\omega\|^2+\|G_Y\sqrt\omega\|^2\big)\\
                                                =&O\big((\xi+\frac{L}{\xi^2})\|G\|^2_\X\big).
\end{aligned}
\end{align}
Collecting (\ref{b1})-(\ref{b7}), we get
\[
\begin{split}
\|G\|^2_\X\lesssim& (\frac{\sqrt\e+L+\|v^0_e\|_{\infty}}{\xi^2}+\xi)\|G\|_\X^2+\xi(\sqrt\e+\|v^0_e\|_{\infty})\|G\|_\Y^2\\
                  &+|\z\partial_YF_1-\partial_XF_2,G\omega\y|.
\end{split}
\]
Finally we choose $\xi=(\se+L+\|v^0_e\|_\infty)^{\frac{1}{4}}$ to be a small constant, then we finish the proof.
\end{proof}

\begin{Lemma}\label{important2}
 Let $G$ is the solution of equation (\ref{G-Euq}), with $u^0_e$ satisfies (\ref{shear Eul profile}), $u^0_p$ satisfies (\ref{pra them2}), then
\begin{align}
\|G\|_\X^2\leqslant C\se\|G\|_\Y^2+\z\partial_YF_1-\partial_XF_2,G\omega\y.
\end{align}
\end{Lemma}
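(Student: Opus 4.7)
The plan is to test equation~(\ref{G-Euq}) against $-G\omega$, with $\omega=L-X$, following the IBP scheme of Lemma~\ref{important1}. The standard integration by parts will again place the positive principal contributions $\tfrac{3}{2}\|U_sG_X\|^2 + \tfrac{1}{2}\|U_sG_Y\|^2 + \e(\|\sqrt{U_s}G_{XX}\sqrt\omega\|^2 + 2\|\sqrt{U_s}G_{XY}\sqrt\omega\|^2 + \|\sqrt{U_s}G_{YY}\sqrt\omega\|^2)$ on the left, together with the nonnegative boundary term $\e\z U_{sY}G_Y,G_Y\omega\y|_{Y=0}$, whose positivity is preserved by the Prandtl monotonicity $\partial_y u^0_p|_{y=0}>0$ from~(\ref{pra them2}). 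The task is to show that every error term from the seven blocks (b1)--(b7) of Lemma~\ref{important1} carries an intrinsic $\sqrt\e$-factor or is absorbable back into $\|G\|_\X^2$, \emph{without} using smallness of $L$ or $\|v^0_e\|_\infty$.

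Two structural inputs from the shear-Euler hypothesis drive this. First, $v^0_e\equiv 0$ combined with Remark~\ref{Us} gives $\|V_s\|_\infty\lesssim\sqrt\e$, so every $V_s$-containing term inside the $R[\Phi]$-block (the terms $\z V_s\Phi_{XXY},-G\omega\y$, $\z V_s\Phi_{YYY},-G\omega\y$, and their lower-order companions) automatically acquires a $\sqrt\e$-factor; Cauchy--Schwarz and the Hardy-type Lemma~\ref{Hardy} then collapse these to $O(\sqrt\e)\|G\|_\Y^2$ plus absorbable small-$\|G\|_\X^2$. Second, because $u^0_e$ is $X$-independent, one has $U_{sX}(X,Y)=u^0_{bX}(X,Y/\sqrt\e)+O(\sqrt\e)$, and Proposition~\ref{construct} gives the rapid $y$-decay $\|\z y\y^M\nabla^k u^0_b\|_\infty\lesssim 1$, whence $\|U_{sX}(X,\cdot)\|_{L^1_Y}\lesssim\sqrt\e$ uniformly in $X$.

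For the residual ``hard'' errors $\z U_sU_{sX}G_X^2,\omega\y$, $\z U_sU_{sX}G_Y^2,\omega\y$ from (b1)--(b2) and $\z U_{sX}\Delta\Phi,-G\omega\y$ from the $R[\Phi]$-block---those carrying a $U_{sX}$ but no $V_s$---the Sobolev embedding $\|G_X(X,\cdot)\|_{L^\infty_Y}^2\leqslant 2\|G_X(X,\cdot)\|_{L^2_Y}\|G_{XY}(X,\cdot)\|_{L^2_Y}$, valid because $G_X|_{Y=0}=0$, converts the $L^1_Y$-smallness of $U_{sX}$ into
\[
|\z U_sU_{sX}G_X^2,\omega\y|\lesssim L\sqrt\e\,\|G_X\|\,\|G_{XY}\|,
\]
and analogously for the $G_Y$-variant. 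The factor $\|G_X\|^2$ (and $\|G_Y\|^2$) is controlled by Lemma~\ref{Hardy}, while $\|G_{XY}\|^2$ is estimated by splitting at the boundary-layer scale $Y\sim\sqrt\e$: for $Y\geqslant c\sqrt\e$ the interior bound $U_s\gtrsim 1$ converts $\|\sqrt{U_s}G_{XY}\|^2\leqslant\|G\|_\Y^2/\e$ into $\|G_{XY}\|^2$, and for $Y\leqslant c\sqrt\e$ the vanishing $G_{XY}|_{Y=0}=0$ together with a weighted Hardy-type argument pays back an extra $\sqrt\e$, making the combined contribution absorbable.

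The main obstacle is exactly the $\|G_{XY}\|$-estimate just described: it is not directly supplied by the norms $\|G\|_\X,\|G\|_\Y$, and requires the boundary-layer-scale split combined with the vanishing of $G_{XY}$ at $Y=0$ (itself a consequence of the $G$-equation's boundary data). With that estimate in hand, the bi-Laplacian block and the remaining cross-terms of $R[\Phi]$ follow the same IBP pattern as in Lemma~\ref{important1}, producing contributions that are either $V_s$-controlled (hence $\sqrt\e$-small) or $U_{sX}$-controlled (handled as above). Summing all seven blocks and choosing the Hardy-splitting parameter $\xi\sim\sqrt\e$ to absorb the residual small-$\|G\|_\X^2$ into the left-hand side yields the claimed inequality $\|G\|_\X^2\leqslant C\sqrt\e\|G\|_\Y^2+\z\partial_YF_1-\partial_XF_2,G\omega\y$.
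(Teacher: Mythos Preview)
Your approach has a genuine gap. The terms you flag as ``hard'' cannot be closed by the $L^1_Y$--Sobolev route. First, the claim $G_{XY}|_{Y=0}=0$ is false: from $G|_{Y=0}=0$ you get $G_X|_{Y=0}=0$, but $G_Y|_{Y=0}$ (hence $G_{XY}|_{Y=0}$) is generically nonzero---the paper in fact retains the boundary integral $\e\z U_{sY}G_Y,G_Y\omega\y|_{Y=0}$ precisely because it does not vanish. Second, even granting the optimistic bound $\|G_{XY}\|\lesssim\e^{-1/2}\|G\|_\Y$, your estimate $\sqrt\e\,\|G_X\|\,\|G_{XY}\|$ collapses to $\|G_X\|\,\|G\|_\Y$, with no residual smallness to absorb. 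Third, you have omitted a term of equal difficulty coming from the bi-Laplacian block: $-\e\z U_{sYY}G_X,G_X\omega\y$. Since $\e U_{sYY}\sim u^0_{pyy}$ is $O(1)$ in $L^\infty$, this contribution is of order $\|G_X\sqrt\omega\|^2$ with no small prefactor; in Lemma~\ref{important1} it was absorbed only through smallness of $L$, which is unavailable here.

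The paper's argument is structurally different: it does \emph{not} estimate these errors separately but keeps exact track of the coefficients and combines them. After the same integrations by parts, the residual quadratic forms in $G_X^2\omega$ and $G_Y^2\omega$ carry the coefficients $-\e U_{sYY}+\tfrac12(V_sU_{sY}-V_{sY}U_s)$ and $-2\e U_{sYY}+\tfrac32(V_sU_{sY}-V_{sY}U_s)$, and there is a zeroth-order term in $G^2\omega$. The crucial point is that
\[
-\e U_{sYY}+V_sU_{sY}-V_{sY}U_s=-u^0_{pyy}+v^0_pu^0_{py}+u^0_pu^0_{px}+O(\sqrt\e)=O(\sqrt\e)
\]
by the Prandtl equation~(\ref{pra them}) itself, while $-\e U_{sYY}=-u^0_{pyy}+O(\sqrt\e)\geqslant O(\sqrt\e)$ by the concavity hypothesis $-u^0_{pyy}\geqslant 0$ in~(\ref{pra them2}). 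Splitting each coefficient as a positive multiple of $-\e U_{sYY}$ plus a multiple of the Prandtl residual renders both first-order forms $\geqslant O(\sqrt\e)$; the zeroth-order coefficient is recognized as $(\e U_{sYY}-U_sU_{sX}-V_sU_{sY})_{YY}$, hence also $O(\sqrt\e)Y^{-2}$ via the same Prandtl cancellation. Your proposal never invokes the Prandtl equation for this cancellation and never uses the concavity $-u^0_{pyy}\geqslant 0$ (you cite only $\partial_y u^0_p|_{y=0}>0$), and neither of your two ``structural inputs'' can substitute for these.
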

\begin{proof}
Taking the inner product of $(\ref{G-Euq})_1$ with $-G\omega$, where $\omega=L-x$.

First term
\begin{align}
\begin{aligned}\label{t1}
\z\partial_{XX}[U_s^2G_X],-G\omega\y&=-\z\partial_X[U_s^2G_X],G\y+\z\partial_X[U_s^2G_X],G_X\omega\y\\
                                    &=\frac{3}{2}\z U_s^2G_X,G_X\y+\z U_sU_{sX}G_X,G_X\omega\y.
\end{aligned}
\end{align}
Second term
\begin{align}
\begin{aligned}\label{t2}
\z\partial_{XY}[U_s^2G_Y],-G\omega\y&=\z\partial_X[U_s^2G_Y],G_Y\omega\y\\
                                  &=\z U_s^2G_{XY},G_Y\omega\y+\z2U_sU_{sX}G_Y,G_Y\omega\y\\
                                  &=\frac{1}{2}\z U_s^2G_Y,G_Y\y+\z U_sU_{sX}G_Y,G_Y\omega\y.
\end{aligned}
\end{align}
Bi-Laplacian term
\[
\z-\e\Delta^2\Phi,-G\omega\y=\e\z\Phi_{XXXX}+2\Phi_{XXYY}+\Phi_{YYYY},G\omega\y.
\]
\begin{align}
\begin{aligned}\label{t3.1}
\e\z\Phi_{XXXX},G\omega\y=&-\e\z\Phi_{XXX},G_X\omega\y+\e\z\Phi_{XXX},G\y\\
                         =&\e\z\Phi_{XX},G_{XX}\omega\y-2\e\z\Phi_{XX},G_X\y\\
                         =&\e\z U_sG_{XX}+2U_{sX}G_X+U_{sXX}G,G_{XX}\omega\y\\
                          &-2\e\z U_sG_{XX}+2U_{sX}G_X+U_{sXX}G,G_X\y\\
                         =&\e\z U_sG_{XX},G_{XX}\omega\y-\e\z 2U_{sX}G_X+U_{sXX}G,G_X\y\\
                          &-\e\z2U_{sXX}G_X+U_{sXXX}G,G_X\omega\y,\\
                         =&\e\z U_sG_{XX},G_{XX}\omega\y+O\big(\e\|G_X\|^2\big),\\
                         =&\e\z U_sG_{XX},G_{XX}\omega\y+O\big(\e\|G\|^2_{\X}+\e\|G\|^2_{\Y}\big),
\end{aligned}
\end{align}
next
\begin{align}
\begin{aligned}\label{t3.2}
2\e\z\Phi_{XXYY},G\omega\y=&-2\e\z\Phi_{XXY},G_Y\omega\y=2\e\z\Phi_{XY},G_{XY}\omega\y-2\e\z\Phi_{XY},G_Y\y\\
                          =&2\e\z U_sG_{XY}+U_{sX}G_Y+U_{sY}G_X+U_{sXY}G,G_{XY}\omega\y\\
                           &-2\e\z U_sG_{XY}+U_{sX}G_Y+U_{sY}G_X+U_{sXY}G,G_Y\y\\%
                          =&2\e\z U_sG_{YY},G_{YY}\omega\y-\e\z U_{sXX}G_Y,G_Y\omega\y+\e\z U_{sX}G_Y,G_Y\y\\
                           &-\e\z U_{sYY}G_X,G_X\omega\y-2\e\z U_{sXY}G_Y,G_X\omega\y-2\e\z U_{sXYY}G,G_Y\omega\y\\
                           &-\e\z U_{sX}G_Y,G_Y\y-2\e\z U_{sY}G_Y,G_X\y-2\e\z U_{sXY}G,G_Y\y\\
                          =&2\e\z U_sG_{YY},G_{YY}\omega\y-\e\z U_{sYY}G_X,G_X\omega\y+O\big(\se\|G_X\|^2+\se\|G_Y\|^2\big)\\
                          =&2\e\z U_sG_{YY},G_{YY}\omega\y-\e\z U_{sYY}G_X,G_X\omega\y+O\big(\se\|G\|^2_{\X}+\se\|G\|^2_{\Y}\big),
\end{aligned}
\end{align}
and
\begin{align}
\begin{aligned}\label{t3.3}
\e\z\Phi_{YYYY},G\omega\y=&-\e\z\Phi_{YYY},G_Y\omega\y=\e\z \Phi_{YY},G_Y\omega\y|_{Y=0}+\e\z\Phi_{YY},G_{YY}\omega\y\\
                         =&\e\z U_sG_{YY}+2U_{sY}G_Y+U_{sYY}G,G_Y\omega\y|_{Y=0}\\
                          &+\e\z U_sG_{YY}+2U_{sY}G_Y+U_{sYY}G,G_{YY}\omega\y\\
                         =&2\e\z U_{sY}G_Y,G_Y\omega\y|_{Y=0}+\e\z U_sG_{YY},G_{YY}\omega\y+\e\z2U_{sY}G_Y+U_{sYY}G,G_{YY}\omega\y\\
                         =&\e\z U_{sY}G_Y,G_Y\omega\y|_{Y=0}+\e\z U_sG_{YY},G_{YY}\omega\y-\e\z 2U_{sYY}G_Y+U_{sYYY}G,G_Y\omega\y,\\
                         =&\e\z U_{sY}G_Y,G_Y\omega\y|_{Y=0}+\e\z U_sG_{YY},G_{YY}\omega\y\\
                          &-2\e\z U_{sYY}G_Y,G_Y\omega\y+\frac{\e}{2}\z U_{sYYYY}G,G\omega\y,
\end{aligned}
\end{align}
because $U_{sY}|_{Y=0}>0$, the first term above is positive. Now, we begin to deal with $R[\Phi]$ term
\begin{align}
\begin{aligned}\label{t4.1}
\z V_s\Phi_{XXY},-G\omega\y=&\z V_s\phi_{XY},G_X\omega\y+\z V_{sX}\Phi_{XY},G\omega\y-\z V_s\Phi_{XY},G\y\\
                           =&\z V_s(U_sG_{XY}+U_{sX}G_Y+U_{sY}G_X+U_{sXY}G),G_X\omega\y\\
                            &-\z V_{sX}\Phi_X,G_Y\omega\y-\z V_{sXY}\Phi_X,G\omega\y+\z V_s\Phi_X,G_Y\y+\z V_{sY}\Phi_X,G\y\\
                           =&\z V_sU_sG_{XY},G_X\omega\y+\z V_sU_{sY}G_X,G_X\omega\y+O\big(\se\|G_X\|^2+\se\|G_Y\|^2\big)\\
                           =&\frac{1}{2}\z (V_sU_{sY}-V_{sY}U_s)G_X,G_X\omega\y+O\big(\se\|G\|^2_{\X}+\se\|G\|^2_{\Y}\big),
\end{aligned}
\end{align}
\begin{align}
\begin{aligned}\label{t4.2}
\z V_s\Phi_{YYY},-G\omega\y=&\z V_s\Phi_{YY},G_Y\omega\y+\z V_{sY}\Phi_{YY},G\omega\y\\
                           =&\z V_s(U_sG_{YY}+2U_{sY}G_Y+U_{sYY}G),G_Y\omega\y\\
                            &-\z V_{sY}\Phi_Y,G_Y\omega\y-\z V_{sYY}\Phi_Y,G\omega\y\\
                           =&-\frac{1}{2}\z (V_sU_s)_YG_Y,G_Y\omega\y+\z 2V_sU_{sY}G_Y,G_Y\omega\y-\frac{1}{2}\z (V_sU_{sYY})_{Y}G,G\omega\y\\
                            &-\z V_{sY}U_sG_Y,G_Y\omega\y-\z V_{sY}U_{sY}G,G_Y\omega\y\\
                            &-\z V_{sYY}U_sG_Y,G\omega\y-\z V_{sYY}U_{sY}G,G\omega\y\\
                           =&-\frac{1}{2}\z (V_sU_s)_YG_Y,G_Y\omega\y+\z 2V_sU_{sY}G_Y,G_Y\omega\y-\frac{1}{2}\z (V_sU_{sYY})_{Y}G,G\omega\y\\
                            &-\z V_{sY}U_sG_Y,G_Y\omega\y+\frac{1}{2}\z (V_{sY}U_{sY})_YG,G\omega\y\\
                            &+\frac{1}{2}\z (V_{sYY}U_s)_YG,G\omega\y-\z V_{sYY}U_{sY}G,G\omega\y\\
                           =&\frac{3}{2}\z (V_sU_{sY}-V_{sY}U_s)G_Y,G_Y\omega\y+\frac{1}{2}\z(-V_sU_{sYYY}+V_{sYYY}U_s)G,G\omega\y,
\end{aligned}
\end{align}
\begin{align}
\begin{aligned}\label{t5}
\z -U_{sX}\Delta\Phi,-G\omega\y=&-\z U_{sX}\Phi_X,G_X\omega\y-\z U_{sXX}\Phi_X,G\omega\y+\z U_{sX}\Phi_X,G\y\\
                                &-\z U_{sX}\Phi_Y,G_Y\omega\y-\z U_{sXY}\Phi_Y,G\omega\y\\
                               =&-\z U_{sX}\Phi_X,G_X\omega\y-\z U_{sX}\Phi_Y,G_Y\omega\y\\
                                &-\z U_{sXY}\Phi_Y,G\omega\y+O\big(\se\|G_X\|^2+\se\|G_Y\|^2\big)\\
                               =&-\z U_{sX}U_sG_X,G_X\omega\y-\z U_{sX}U_sG_Y,G_Y\omega\y-\z U_{sX}U_{sY}G,G_Y\omega\y\\
                                &-\z U_{sXY}U_sG_Y,G\omega\y-\z U_{sXY}U_{sY}G,G\omega\y+O\big(\se\|G_X\|^2+\se\|G_Y\|^2\big)\\
                               =&-\z U_{sX}U_sG_X,G_X\omega\y-\z U_{sX}U_sG_Y,G_Y\omega\y\\
                                &+\frac{1}{2}\z (U_{sX}U_{sY})_YG,G\omega\y+\frac{1}{2}\z (U_{sXY}U_s)_YG,G\omega\y\\
                                &-\z U_{sXY}U_{sY}G,G\omega\y+O\big(\se\|G_X\|^2+\se\|G_Y\|^2\big)\\
                               =&-\z U_{sX}U_sG_X,G_X\omega\y-\z U_{sX}U_sG_Y,G_Y\omega\y\\
                                &+\frac{1}{2}\z (U_{sX}U_{sYY}+U_{sXYY}U_s)G,G\omega\y+O\big(\se\|G\|^2_{\X}+\se\|G\|^2_{\Y}\big),
\end{aligned}
\end{align}
\begin{align}
\begin{aligned}\label{t6}
\z-\Phi_Y\Delta V_s+\Phi\Delta U_{sX},-G\omega\y=&\z\Phi_YV_{sYY},G\omega\y-\z\Phi U_{sXYY},G\omega\y\\
         =&+\z\Phi_YV_{sXX},G\omega\y-\z\Phi U_{sXXX},G\omega\y\\
         =&\z\Phi_YV_{sYY},G\omega\y-\z\Phi U_{sYY},G\omega\y\\
          &+O\big(\se\|G_X\|^2+\se\|G_Y\|^2\big)\\
         =&\z U_sV_{sYY}G_Y,G\omega\y+\z U_{sY}V_{sYY}G,G\omega\y\\
          &-\z U_sU_{sXYY},G\omega\y+O\big(\se\|G_X\|^2+\se\|G_Y\|^2\big)\\
         =&-\frac{1}{2}\z (U_sV_{sYY})_YG,G\omega\y+\z U_{sY}V_{sYY}G,G\omega\y\\
          &-\z U_sU_{sXYY},G\omega\y+O\big(\se\|G_X\|^2+\se\|G_Y\|^2\big)\\
         =&\frac{1}{2}\z (U_{sY}V_{sYY}-U_sU_{sXYY})G,G\omega\y+O\big(\se\|G\|^2_{\X}+\se\|G\|^2_{\Y}\big).
\end{aligned}
\end{align}
Collecting (\ref{t1})-(\ref{t6}), we have
\begin{align}
\begin{aligned}\label{all}
  &\z\partial_{XX}[U_s^2G_X]+\partial_{XY}[U_s^2G_Y]-\e\Delta^2\Phi+R[\Phi],-G\omega\y\\
=&\frac{3}{2}\|U_sG_X\|^2+\frac{1}{2}\|U_sG_Y\|^2+\e\z U_{sY}G_Y,G_Y\omega\y|_{Y=0}\\
&+\e\{\|\sqrt U_sG_{XX}\sqrt\omega\|^2+2\|\sqrt U_sG_{XY}\sqrt\omega\|^2+\|\sqrt U_sG_{YY}\sqrt\omega\|^2\}\\
&+\z(-\e U_{sYY}+\frac{1}{2}V_sU_{sY}-\frac{1}{2}V_{sY}U_s)G_X,G_X\omega\y\\
&+\z(-2\e U_{sYY}+\frac{3}{2}V_sU_{sY}-\frac{3}{2}V_{sY}U_s)G_Y,G_Y\omega\y\\
&+\frac{1}{2}\z (\e U_{sYYYY}-U_sU_{sXYY}-V_sU_{sYYY}+U_{sX}U_{sYY}+U_{sY}V_{sYY})G,G\omega\y\\
&+O\big(\se\|G\|^2_{\X}+\se\|G\|^2_{\Y}\big).
\end{aligned}
\end{align}
Notice that
\[
\begin{split}
-\e U_{sYY}+V_sU_{sY}-V_{sY}U_s=&-\e U_{sYY}+V_sU_{sY}+U_sU_{sX}\\
   =&-u^0_{pyy}+v^0_p u^0_{py}+u^0_p u^0_{px}+O\big(\se\big)=\big(\se\big),
\end{split}
\]
and
\[
\begin{split}
-\e U_{sYY}=-u^0_{pyy}+O\big(\se\big)\geqslant O\big(\se\big),
\end{split}
\]
by assumptions (\ref{pra them2}). Then
\begin{align}
\begin{aligned}\label{1order}
&\z(-\e U_{sYY}+\frac{1}{2}V_sU_{sY}-\frac{1}{2}V_{sY}U_s)G_X,G_X\omega\y\\
&+\z(-2\e U_{sYY}+\frac{3}{2}V_sU_{sY}-\frac{3}{2}V_{sY}U_s)G_Y,G_Y\omega\y\\
&\geqslant O\big(\se\|G_X\|^2+\se\|G_Y\|^2\big).
\end{aligned}
\end{align}
Notice that
\[
\begin{split}
&Y^2(\e U_{sYYYY}-U_sU_{sXYY}-V_sU_{sYYY}+U_{sX}U_{sYY}+U_{sY}V_{sYY})\\
=&Y^2(\e U_{sYY}-U_sU_{sX}-V_sU_{sY})_{YY}\\
=&y^2(u^0_{pyy}-u^0_p u^0_{px}-v^0_p u^0_{py})_{yy}+O\big(\se\big)=O\big(\se\big).
\end{split}
\]
So
\begin{align}
\begin{aligned}\label{0order}
&\frac{1}{2}\z (\e U_{sYYYY}-U_sU_{sXYY}-V_sU_{sYYY}+U_{sX}U_{YY}+U_{sY}V_{YY})G,G\omega\y\\
=&O\big(\se\|\frac{G}{Y}\|^2\big)=O\big(\se\|G_Y\|^2\big)=O\big(\se\|G\|^2_{\X}+\se\|G\|^2_{\Y}\big).
\end{aligned}
\end{align}
Combining (\ref{all}), (\ref{1order}) and (\ref{0order}), we obtain
\begin{align}
\begin{aligned}\label{end}
  &\z\partial_{XX}[U_s^2G_X]+\partial_{XY}[U_s^2G_Y]-\e\Delta^2\Phi+R[\Phi],-G\omega\y\\
\geqslant&\frac{3}{2}\|U_sG_X\|^2+\frac{1}{2}\|U_sG_Y\|^2+\e\z U_{sY}G_Y,G_Y\omega\y|_{Y=0}\\
&+\e\{\|\sqrt U_sG_{XX}\sqrt\omega\|^2+2\|\sqrt U_sG_{XY}\sqrt\omega\|^2+\|\sqrt U_sG_{YY}\sqrt\omega\|^2\}\\
&+O\big(\se\|G\|^2_{\X}+\se\|G\|^2_{\Y}\big).
\end{aligned}
\end{align}
So we end the proof.
\end{proof}

{\bf Proof of Proposition \ref{Prop}:}\\
Under the assumptions of Theorem \ref{main1}, by Lemma \ref{routine} and Lemma \ref{important1}, we have
\[
\begin{split}
\|G\|_\Y^2&\lesssim\|G\|_\X^2+|\z\partial_YF_1-\partial_XF_2,G\y|,\\
\|G\|_\X^2&\lesssim(\sqrt\e+L+\|v^0_e\|_{\infty})\|G\|_\Y^2+|\z\partial_YF_1-\partial_XF_2,G\omega\y|,
\end{split}
\]
similarly, under the assumptions of Theorem \ref{main2}, by Lemma \ref{routine} and Lemma \ref{important2}, we have
\[
\begin{split}
\|G\|_\Y^2&\lesssim\|G\|_\X^2+|\z\partial_YF_1-\partial_XF_2,G\y|,\\
\|G\|_\X^2&\lesssim\se\|G\|_\Y^2+|\z\partial_YF_1-\partial_XF_2,G\omega\y|.
\end{split}
\]
Let $\delta=\sqrt\e+L+\|v^0_e\|_{\infty}$ in the first case, $\delta=\se$ in the second case, $\delta$ is small, then we have
\[
\begin{split}
\|G\|_\X^2+\|G\|_\Y^2&\lesssim\|G\|_\X^2+|\z\partial_YF_1-\partial_XF_2,G\y|\\
                                    &\lesssim\delta\|G\|_\Y^2+\z\partial_YF_1-\partial_XF_2,G\omega\y|+|\z\partial_YF_1-\partial_XF_2,G\y|\\
                                    &\lesssim(\|F_1\|+\|F_2\|)(\|G_Y\|+\|G_X\|)\\
                                    &\lesssim(\|F_1\|+\|F_2\|)(\|G\|_\X+\|G\|_\Y).
\end{split}
\]
It is easy to see
\[
\|\se\Phi_{XX},\se\Phi_{XY},\se\Phi_{YY},\Phi_X,\Phi_Y\|\lesssim\|G\|_\X+\|G\|_\Y\lesssim\|F_1\|+\|F_2\|.
\]
Then we obtain the proof.

\section{Proof of the main Theorems}
{\bf Proof of Theorem \ref{main1} and Theorem \ref{main2}:}
Let $\Us=[U_s,V_s]$, $\U=[U,V]$ and $\RE=[R_1,R_2]$, now we write Navier-Stokes equation in the following form
\begin{equation}\label{U-NS}
\left\{
\begin{aligned}
&-\e\Delta \U+\Us\cdot\nabla\U+\U\cdot\nabla\Us+\U\cdot\nabla \U+\nabla P=-\RE, \\
&\nabla \cdot \U=0,\hspace{5mm} \U|_\Omega=0.
\end{aligned}
\right.
\end{equation}
We use the method of contraction mapping. Define
\[
\begin{split}
\|\U\|_\Z:=\|\U\|+\se\|\nabla\U\|+\e^\frac{3}{2}\|\nabla^2\U\|.
\end{split}
\]
We denote $\T:W^{2,2}(\Omega)\rightarrow W^{2,2}(\Omega)$ as this way, $\T(\U)=\W$ where $\W$ is given by
\begin{equation}
\left\{
\begin{aligned}
&-\e\Delta \W+\Us\cdot\nabla\W+\W\cdot\nabla\Us+\nabla P=-\RE-\U\cdot\nabla\U, \\
&\nabla \cdot \W=0,\hspace{5mm} \W|_\Omega=0.
\end{aligned}
\right.
\end{equation}
Let $B=\{\U\in W^{2,2}(\Omega):\|\U\|_\Z\leqslant C_0(L)\e^{\frac{3}{2}}\}$, $C_0$ is chosen latter. Next we prove $\T$ is a contractive mapping in $B$, if $\|\RE\|\leqslant C_1\e^{\frac{3}{2}}$. We write $\F=-\RE-\U\cdot\nabla \U$, from Proposition \ref{Prop},
\[
\begin{split}
\|\W\|+\se\|\nabla\W\|\lesssim\|\F\|.
\end{split}
\]
Due to the $W^{2,2}$ estimate of Stokes equations in convex polygon in \cite{KE},
\[
\begin{split}
\e\|\nabla^2\W\|\lesssim_L\|\F\|+\|\nabla\W\|+\frac{1}{\se}\|\W\|\lesssim_L\frac{1}{\se}\|\F\|.
\end{split}
\]
So we get
\[
\begin{split}
\|\W\|_\Z\leqslant C_2(L)\|\F\|.
\end{split}
\]
It's easy to see
\[
\begin{split}
\|\U\cdot\nabla \U\|\lesssim_L\|\U\|_{L^\infty}\|\nabla\U\|\lesssim_L\|\U\|^{\frac{1}{4}}\|\nabla\U\|^{\frac{3}{2}}\|\nabla^2\U\|^{\frac{1}{4}}\lesssim_L\e^{-\frac{9}{8}}\|\U\|^2_\Z.
\end{split}
\]
It implies
\[
\begin{split}
\|\W\|_\Z\leqslant C_2(L)\|\F\|+C_3(L)\e^{-\frac{9}{8}}\|\U\|^2_\Z\leqslant (C_1C_2+C_3C^2_0\e^\frac{3}{8})\e^{\frac{3}{2}}.
\end{split}
\]
Select $C_0(L)=C_1(L)C_2(L)+1$, $\T(B)\subset B$ when $\e$ is small enough. And if $\U_1, \U_2\in B$,
\[
\begin{split}
\|\T(\U_1-\U_2)\|_\Z&\leqslant C_2(L)\|\U_1\cdot\nabla\U_1-\U_2\cdot\nabla\U_2\|\\
                    &\leqslant C_2(L)\|(\U_1-\U_2)\cdot\nabla\U_1\|+\|\U_2\cdot\nabla(\U_1-\U_2)\|\\
                    &\leqslant C_2(L)\|(\U_1-\U_2)\|_\infty\|\nabla\U_1\|+\|\U_2\|_\infty\|\nabla(\U_1-\U_2)\|\\
                    &\leqslant C_3(L)\e^{-\frac{9}{8}}(\|\U_1\|_\Z+\|\U_2\|_\Z)\|\U_1-\U_2\|_\Z\\
                    &\leqslant 2C_1(L)C_3(L)\e^\frac{3}{8}\|\U_1-\U_2\|_\Z,
\end{split}
\]
so $\T$ is a contraction mapping on $B$ when $\e$ is small enough, we can conclude equations (\ref{U-NS}) admits a unique solution and
\[
\begin{split}
\|\U\|_{L^\infty}\lesssim_L\e^{-\frac{5}{8}}\|\U\|_\Z\lesssim_L\e^{\frac{7}{8}}.
\end{split}
\]
So we have
\[
\begin{split}
 &|U^\e(X,Y)-u^0_e(X,Y)-u^0_b(X,\frac{Y}{\se})|\\
=&|\se u^1_e(X,Y)+\se u^1_b(X,\frac{Y}{\se})+\e u^2_e(X,Y)+\e\hat u^2_b(X,\frac{Y}{\se})+U(X,Y)|\\
\lesssim_L&\se,\\
 &|V^\e(X,Y)-v^0_e(X,Y)|\\
=&|\se v^0_b(X,\frac{Y}{\se})+\se v^1_e(X,Y)+\e v^1_b(X,\frac{Y}{\se})+\e v^2_e(X,Y)+\e^\frac{3}{2}\hat{v}^2_b(X,\frac{Y}{\se})+V(X,Y)|\\
\lesssim_L&\se,
\end{split}
\]
which ends the proof.

\part*{Appendix}

In this appendix, we prove lemma \ref{pra1}. For convenience, we write $[\ub,\vb]:=[u^0_p,v^0_p]$ and we homogenize the system (\ref{prandtl1 bry}) like \cite{GI-H}:
\begin{align}
\begin{aligned}
&u(x,y)=u^1_b(x,y)+u^1_e(x,0)\eta(y),\\
&v(x,y)=v^1_b(x,y)-v^1_b(x,0)+u^1_{eX}(x,0)I_\eta(y), \\
&I_\eta(y):=\int_y^\infty\eta(y')dy'.
\end{aligned}
\end{align}
Here, we select $\eta$ to be a $C^\infty$ function satisfying the following:
\begin{align}
\eta(0)=1,\hspace{5mm} \int_0^\infty\eta=0,\hspace{5mm} \eta\text{ decays fast as }y\rightarrow\infty.
\end{align}
Due to (\ref{prandtl1 bry}), the homogenized unknowns $[u,v]$ satisfy the system
\begin{align}
\left
\{
\begin{aligned}
& \ub \p_x u + u \p_x \ub + \vb \p_y u + v\p_y \ub - \p_{yy} u +p_x= f^{(1)}+F=: h, \\
& p_y=0,\\
& \p_x u + \p_y v = 0,\\
& u|_{x = 0}=U^1_B+u^1_e(0,0)\eta(y)=:u_0(y), \hspace{3 mm} [u,v]|_{y = 0} =0, \hspace{3 mm} u|_{y \rightarrow \infty} = 0,
\end{aligned}
\right.
\end{align}
where
\begin{align}
F=\ub u^1_{eX}(x,0)\eta+\ub_xu^1_e(x,0)\eta+\vb u^1_e(x,0)\eta'+\ub_y u^1_{eX}(x,0)I_\eta-u^1_e(x,0)\eta''.
\end{align}
Notice that $p$ is independent on $y$, we evaluate the equation as $y\rightarrow\infty$, we have $p_x=0$.
We still using the stream-function of $[u,v]$
\begin{align}
\phi(x,y):=\int_0^y u(x,y')dy',\hspace{5 mm} \p_y\phi=u, \hspace{5 mm}\p_x\phi=-v.
\end{align}
Then $\phi$ satisfies
\begin{align}\label{phi}
\left
\{
\begin{aligned}
& \ub \phi_{xy} +\ub_x\phi_{y}+ \vb \phi_{yy}- \phi_x \ub_{y} - \phi_{yyy}  =  h, \\
& \phi|_{x = 0}=\int_0^yu_0(y')dy', \hspace{3 mm} \phi|_{y = 0} =\phi_y|_{y = 0}=0, \hspace{3 mm} \phi_y|_{y \rightarrow \infty} = 0.
\end{aligned}
\right.
\end{align}
In order to give a priori estimate of (\ref{phi}), we denote $g=\frac{\phi}{\ub}$. Recall $\ub\sim y$ when $y\leqslant1$ and $\ub\sim 1$, when $y\geqslant1$, and $\phi|_{y = 0} =\phi_y|_{y = 0}=0$, $g$ is well-defined. And $g$ satisfies
\begin{align}\label{g}
\left
\{
\begin{aligned}
&\p_{x}[\ub^2 g_y] - \p^3_{y}[\ub g]+\vb\p^2_{y}[\ub g]-\ub\vb_{yy}g  =  h, \\
& g|_{x = 0}=\frac{\int_0^yu_0}{\ub}, \hspace{3 mm} g|_{y = 0}=0, \hspace{3 mm} g_y|_{y \rightarrow \infty} = 0.
\end{aligned}
\right.
\end{align}
In the appendix, we write $\|\cdot\|$ for $L^2_{x,y}(\Omega)$, $\z\cdot,\cdot\y=\z\cdot,\cdot\y_{L^2_{x,y}}$, $\z\cdot,\cdot\y_{x=x_0}=\z\cdot,\cdot\y_{L^2_{y}(x=x_0)}$ and $\z\cdot,\cdot\y_{y=0}=\z\cdot,\cdot\y_{L^2_{x}(y=0)}$, now we define the norms of $g$:
\begin{align}\label{norm g}
\begin{aligned}
\|g\|_{\Xi_0}:=\sup_{0\leqslant x_0\leqslant L}\|\ub g_y\rho\|_{x=x_0}+\|\sqrt{\ub} g_{yy}\rho\|,\\
\|g\|_{\Xi_1}:=\sup_{0\leqslant x_0\leqslant L}\|\ub g_{xy}\frac{\rho}{\z y\y}\|_{x=x_0}+\|\sqrt{\ub} g_{xyy}\frac{\rho}{\z y\y}\|,
\end{aligned}
\end{align}
here $\rho=\z y\y^N$, for $N$ large constant. Next, let us prove the following priori estimate of $g$.
\begin{proposition}
Suppose $g$ be a smooth solution of (\ref{g}), $L>0$ small enough, then there exists a positive constant $C$ independent on $L$, s.t. $g$ satisfies
\begin{align}\label{g est1}
\|g\|_{\Xi_0}^2\leqslant &\|\ub g_y \rho\|_{x=0}^2+C\|h\rho\|^2,\\\label{g est2}
\|g_x\|_{\Xi_1}^2\leqslant &\|\ub g_{xy} \frac{\rho}{\z y\y}\|_{x=0}^2+C\|g\|_{\Xi_0}^2+C\|h_x\frac{\rho}{\z y\y}\|^2.
\end{align}
\end{proposition}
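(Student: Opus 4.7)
The plan is to prove both estimates by a weighted energy argument, treating $x$ as a time-like variable and exploiting the Prandtl structure of $[\ub,\vb]$ (namely $\ub_x+\vb_y=0$ and $\ub\ub_x+\vb\ub_y=\ub_{yy}$) together with the monotonicity $\ub\sim y$ near $y=0$ coming from Proposition~\ref{Oleinik}. The multipliers are designed so that the transport term $\p_x[\ub^2 g_y]$ produces the $x$-trace on the left-hand sides, while the triple derivative $-\p_y^3[\ub g]$ yields the positive interior diffusion and a favorable boundary contribution at $y=0$.

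For (\ref{g est1}), I would take the $L^2$ pairing of the equation with $g_y\rho^2$ over $(0,x_0)\times\R_+$ for arbitrary $x_0\in[0,L]$. The first term produces $\frac12\|\ub g_y\rho\|_{x=x_0}^2-\frac12\|\ub g_y\rho\|_{x=0}^2$ plus an $O(L)$ error of the form $\z\ub\ub_x g_y,g_y\rho^2\y$ that is absorbable once $L$ is small. Two integrations by parts in $y$ on $-\z\p_y^3[\ub g],g_y\rho^2\y$ give the principal positive quantity $\|\sqrt{\ub}g_{yy}\rho\|^2$, the nonnegative boundary term $2\ub_y(0)|g_y(0)|^2\rho^2(0)$, together with commutator pieces $\ub_y g_y g_{yy}\rho^2,\ \ub_{yy}gg_{yy}\rho^2,\ \ub g_{yy}g_y\rho\rho_y$, which are absorbed by the diffusion via a weighted Hardy inequality on the strip $\{0<y<1\}$ where $\ub\sim y$. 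The Rayleigh-type block $\vb\p_y^2[\ub g]\cdot g_y-\ub\vb_{yy}g\cdot g_y$ is handled by integrating by parts in $y$, using $\vb|_{y=0}=0$, $\vb_y=-\ub_x$, $\vb_{yy}=-\ub_{xy}$, and the decay of $\ub_{xy}$ from Lemma~\ref{pra0}; after cancellation only terms quadratic in $g_y$ (and lower order in $g/\z y\y$) remain, all controllable by $\|\sqrt{\ub}g_{yy}\rho\|^2$ plus $L\sup_{x_0}\|\ub g_y\rho\|_{x=x_0}^2$. The right-hand side is treated by Cauchy--Schwarz: $|\z h,g_y\rho^2\y|\leqslant C\|h\rho\|^2+\tfrac14\|g_y\rho\|^2$, and the $\|g_y\rho\|^2$ piece is estimated by Hardy in $y$ plus $L$-smallness in $x$. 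Taking the supremum over $x_0\in[0,L]$ yields (\ref{g est1}).

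For (\ref{g est2}), I would differentiate (\ref{g}) in $x$, obtaining the same equation for $g_x$ with a new forcing $h_x+\mathcal{R}(\ub_x,\vb_x;g,g_y,g_{yy})$, and then pair with the multiplier $g_{xy}\rho^2/\z y\y^2$. The added factor $\z y\y^{-1}$ is necessary because $g_{xy}$ does not inherit the full decay enjoyed by $g_y$ (the $x$-derivative acts on $\ub$, $\vb$ which do not vanish at infinity in the relevant sense), and one sees that $(\rho/\z y\y)_y/(\rho/\z y\y)$ is still compatible with Hardy. The same three ingredients---$x$-trace from the transport term, $y$-diffusion $\|\sqrt{\ub}g_{xyy}\rho/\z y\y\|^2$ from $-\p_y^3[\ub g_x]$, and cancellation in the Rayleigh block---reproduce the left-hand side of (\ref{g est2}). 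The commutator $\mathcal{R}$, being linear in $(g,g_y,g_{yy})$ with smooth rapidly decaying coefficients, is bounded after Cauchy--Schwarz by $\|g\|_{\Xi_0}^2$ plus an absorbable piece; the $h_x$ contribution gives $C\|h_x\rho/\z y\y\|^2$.

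The main obstacle is closing the Rayleigh-block estimate $\z\vb\p_y^2[\ub g]-\ub\vb_{yy}g,g_y\rho^2\y$ without losing a constant that prevents absorption: one needs to exploit precisely the cancellation $\vb_y+\ub_x=0$, which converts the leading cross term into a total $\p_x$ derivative that combines with the transport term, and the fact that $\ub\vb_{yy}=-\ub\ub_{xy}$ has the same order as $\ub_x\ub_y$, so that the Prandtl identity reduces the potentially dangerous coefficient in front of $g^2$ to something bounded by $\vb_{yy}$, which decays fast. The secondary difficulty is the choice of the weight exponent $N$ in $\rho=\z y\y^N$: it must be large enough that $\rho^2|\mathrm{coeff}|$ is still integrable (using the fast decay in Lemma~\ref{pra0}) yet compatible with the Hardy inequality used to control $\|g_y\rho\|$ by $\|\sqrt{\ub}g_{yy}\rho\|$ and the boundary trace.
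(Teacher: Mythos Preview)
Your plan for (\ref{g est1}) is essentially the paper's: multiply by $g_y\rho^2$, extract the $x$-trace from the transport term and $\|\sqrt{\ub}g_{yy}\rho\|^2$ from the triple derivative, then absorb the rest using a Hardy-type inequality and the smallness of $L$. One remark: the Rayleigh block $\z\vb\p_y^2[\ub g]-\ub\vb_{yy}g,\,g_y\rho^2\y$ does \emph{not} require the Prandtl cancellation you describe. The paper simply integrates by parts once in $y$ and bounds every resulting term by $O(\|g_y\rho\|^2)$, which the Hardy argument turns into $O(L^{1/3}\|g\|_{\Xi_0}^2)$. Your proposed cancellation is harmless but unnecessary here.

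For (\ref{g est2}) there is a genuine gap. When you differentiate (\ref{g}) in $x$, the commutator you call $\mathcal{R}$ contains the term $-\p_y^3[\ub_x g]$, whose expansion
\[
-\p_y^3[\ub_x g]=-\ub_x g_{yyy}-3\ub_{xy}g_{yy}-3\ub_{xyy}g_y-\ub_{xyyy}g
\]
is \emph{not} linear in $(g,g_y,g_{yy})$: it carries $g_{yyy}$. Moreover, the $3\ub_{xy}g_{yy}$ piece has a coefficient that does not vanish at $y=0$, so after Cauchy--Schwarz you need $\|g_{yy}\tfrac{\rho}{\z y\y}\|$ without the degenerate weight $\sqrt{\ub}$, which $\|g\|_{\Xi_0}$ does not provide. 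This is exactly what the paper flags as ``the difficult term''. Its resolution is to go back to the stream-function equation (\ref{phi}): since $\phi_{yyy}$ appears explicitly there, one reads off
\[
\Big\|\phi_{yyy}\tfrac{\rho}{\z y\y}\Big\|^2=O\big(\|g\|_{\Xi_0}^2+L^{1/3}\|g\|_{\Xi_1}^2+\|h\tfrac{\rho}{\z y\y}\|^2\big),
\]
and then unwinds $\phi=\ub g$ to recover both $\|\ub g_{yyy}\tfrac{\rho}{\z y\y}\|$ and the unweighted $\|g_{yy}\tfrac{\rho}{\z y\y}\|$ (the latter via a local estimate $\|g_{yy}\|_{L^2_{loc}}\lesssim\|\phi_{yyy}\|_{L^2_{loc}}+\|\phi_{yy}\|_{L^2_{loc}}$ near $y=0$). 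Without this step your argument for (\ref{g est2}) does not close; you should either adopt the paper's detour through (\ref{phi}), or else integrate $\z\p_y^3[\ub_x g],g_{xy}\tfrac{\rho^2}{\z y\y^2}\y$ by parts once in $y$ and carefully track the resulting boundary contribution $2\z\ub_{xy}g_y,g_{xy}\y_{y=0}$ against the favorable boundary terms you already have.
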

\begin{proof}
Multiply equation (\ref{g}) by $g_y\rho^2$ and integrate in $(0,x_0)\times(0,\infty)$.
\[
\begin{split}
\z[\ub^2 g_y]_x,g_y\rho^2\y=&\z \ub^2 g_{xy},g_y\rho^2\y+2\z\ub\ub_x g_y,g_y\rho^2\y\\
                           =&\frac{1}{2}\|\ub g_y\rho\|^2_{x=x_0}-\frac{1}{2}\|\ub g_y\rho\|^2_{x=0}+\z\ub\ub_x g_y,g_y\rho^2\y.
\end{split}
\]
We can dominate $\|g_y\|$ by $\|g\|_{\Xi_0}$. Let $0<\xi\leqslant1$ be a constant being choosing later. $\chi(y)$ is smooth cut-off function, satisfies
$\chi|_{[0,1]}=1$, $\chi|_{[2,\infty]}=0$. Then,
\[
\begin{split}
\z g_y,g_y\rho^2\y\lesssim\z g_y,g_y[1-\chi(\frac{y}{\xi})]^2\rho^2\y+\z g_y,g_y\chi(\frac{y}{\xi})^2\rho^2\y.
\end{split}
\]
When $y\leqslant1$, $1-\chi(\frac{y}{\xi})\lesssim\frac{y}{\xi}\lesssim\frac{\ub}{\xi}$, when $y>1$,  $1-\chi(\frac{y}{\xi})\lesssim\ub\lesssim\frac{\ub}{\xi}$. So
\[
\begin{split}
\z g_y,g_y[1-\chi(\frac{y}{\xi})]^2\rho^2\y\lesssim\frac{1}{\xi^2}\|\ub g_y\rho\|^2\lesssim \frac{L}{\xi^2}\|g\|_{\Xi_0}^2.
\end{split}
\]
While
\[
\begin{split}
\z g_y,g_y\chi(\frac{y}{\xi})^2\rho^2\y=&-2\z yg_y,g_{yy}\chi^2(\frac{y}{\xi})\rho^2\y-\frac{2}{\xi}\z yg_y,g_y\chi(\frac{y}{\xi})\chi'(\frac{y}{\xi})\rho^2\y-2\z yg_y,g_y\chi^2(\frac{y}{\xi})\rho\rho_y)\y\\
                                       \lesssim &\|y\chi(\frac{y}{\xi})g_{yy}\rho\|^2+\frac{1}{\xi^2}\|\ub g_y\rho\|^2\\
                                       \lesssim &\xi\|\sqrt{\ub}g_{yy}\rho\|^2+\frac{L}{\xi^2}\|g\|_{\Xi_0}^2.
\end{split}
\]
So we have
\[
\begin{split}
\z g_y,g_y\rho^2\y\lesssim \xi\|\sqrt{\ub}g_{yy}\rho\|^2+\frac{L}{\xi^2}\|g\|_{\Xi_0}^2,
\end{split}
\]
select $\xi=L^\frac{1}{3}$, then
\begin{align}\label{g hardy}
\z g_y,g_y\rho^2\y\lesssim L^\frac{1}{3}\|g\|_{\Xi_0}^2.
\end{align}
So the first term is
\begin{align}\label{g 1}
\z[\ub^2 g_y]_x,g_y\rho^2\y=\frac{1}{2}\|\ub g_y\rho\|^2_{x=x_0}-\frac{1}{2}\|\ub g_y\rho\|^2_{x=0}+O\big(L^\frac{1}{3}\|g\|_{\Xi_0}^2\big).
\end{align}
The second term:
\[
\begin{split}
-\z\p^3_y[\ub g],g_y\rho^2\y=\z\p^2_y[\ub g],g_{yy}\rho^2\y+2\z\p^2_y[\ub g],g_{yy}\rho_y\rho\y+\z\p^2_y[\ub g],g_{y}\rho^2\y_{y=0}.
\end{split}
\]
\[
\begin{split}
\z\p^2_y[\ub g],g_{yy}\rho^2\y=&\z\ub g_yy+2\ub_y g_y+\ub_{yy}g, g_{yy}\rho^2\y\\
                              =&\|\sqrt{\ub}g_{yy}\rho\|^2-\z\ub_yg_y,g_y\y_{y=0}+\z(\ub_y\rho^2)_y, g^2_y\y\\
                               &-\z\ub_{yy}g_y, g_{y}\rho^2\y-\z(\ub_{yy}\rho^2)_yg,g_y\y\\
                              =&\|\sqrt{\ub}g_{yy}\rho\|^2-\z\ub_yg_y,g_y\y_{y=0}+O\big(\|g_y\rho^2\|+\|y(\ub_{yy}\rho^2)_y\|_{L^\infty}\|\frac{g}{y}\| \|g_y\|\big)\\
                              =&\|\sqrt{\ub}g_{yy}\rho\|^2-\z\ub_yg_y,g_y\y_{y=0}+O\big(\|g_y\rho\|^2\big),
\end{split}
\]
\[
\begin{split}
2\z\p^2_y[\ub g],g_{yy}\rho_y\rho\y=&\z\ub (g_y^2)_y, \rho_y\rho\y+4\z\ub_yg_y,g_y\rho_y\rho\y+2\z\ub_{yy}g,g_y\rho_y\rho\y\\
                                   =&\z(\ub\rho_y\rho,g^2_y\y+O\big(\|g_y\rho^2\|+\|y\ub_{yy}\rho_y\rho\|_{L^\infty}\|\frac{g}{y}\| \|g_y\|\big))\\
                                   =&O\big(\|g_y\rho\|^2\big),
\end{split}
\]
\[
\begin{split}
\z\p^2_y[\ub g],g_{y}\rho^2\y_{y=0}=2\z\ub_yg_y,g_y\rho^2\y_{y=0}.
\end{split}
\]
So the second term is
\begin{align}\label{g 2}
\z\p^2_y[\ub g],g_{yy}\rho^2\y=\|\sqrt{\ub}g_{yy}\rho\|^2+\z\ub_yg_y,g_y\rho^2\y_{y=0}+O\big(L^\frac{1}{3}\|g\|_{\Xi_0}^2\big).
\end{align}
The third term is
\begin{align}\label{g 3}
\begin{aligned}
\z\vb(\ub g)_{yy},g_y\rho^2\y=&\z\vb(\ub g_yy+2\ub_y g_y+\ub_{yy}g),g_y\rho^2\y\\
                             =&-\frac{1}{2}\z(\vb\ub\rho^2)_y,g^2_y\y+O\big(\|\vb \ub_y \|_{L^\infty}\|g_y\rho\|^2+\|y\vb\ub_{yy}\rho^2\|_{L^\infty}\|\frac{g}{y}\|\|g_y\|\big)\\
                             =&O\big(\|\frac{(\vb\ub\rho^2)_y}{\rho^2} \|_{L^\infty}\|g_y\rho\|^2+\|g_y\rho\|^2\big)\\
                             =&O\big(L^\frac{1}{3}\|g\|_{\Xi_0}^2\big).
\end{aligned}
\end{align}
And the last one is
\begin{align}\label{g 4}
\begin{aligned}
\z\vb_{yy}\ub g,g_y\rho^2\y=O\big(\|yvb_{yy}\ub\rho^2\|_{L^\infty}\|\frac{g}{y}\|\| g_y\|\big)=O\big(\|g_y\rho\|^2\big).
\end{aligned}
\end{align}
Collect (\ref{g 1}), (\ref{g 2}), (\ref{g 3}), (\ref{g 4}), we have
\begin{align}\label{g what?}
\begin{aligned}
\frac{1}{2}\|\ub g_y\rho\|^2_{x=x_0}+\|\sqrt{\ub}g_{yy}\rho\|^2+\z\ub_yg_y,g_y\rho^2\y_{y=0}=O\big(L^\frac{1}{3}\|g\|_{\Xi_0}^2\big)+\frac{1}{2}\|\ub g_y\rho\|^2_{x=0}+\z h,g_y\rho^2\y.
\end{aligned}
\end{align}
Take the supremum of $0\leqslant x_0\leqslant L$, notice that $L$ small enough,
\begin{align}\label{g es}
\begin{aligned}
\sup_{0\leqslant x_0\leqslant L}\|\ub g_y\rho\|^2_{x=x_0}+\|\sqrt{\ub}g_{yy}\rho\|^2+\z\ub_yg_y,g_y\rho^2\y_{y=0}\lesssim\|\ub g_y\rho\|^2_{x=0}+\| h\rho\|^2.
\end{aligned}
\end{align}

The inequality (\ref{g est2}) is similar to the (\ref{g est1}). Differential equation (\ref{g}) respect to $x$,
\begin{align}\label{g x}
\begin{aligned}
&\p_{x}[\ub^2 g_{xy}]- \p^3_{y}[\ub g_x]+\vb\p^2_{y}(\ub g_x)-\ub\vb_{yy}g_x+\p_{x}[2\ub\ub_x g_{y}]- \p^3_{y}[\ub_x g]\\
&+\vb_x\p^2_{y}(\ub g)+\vb\p^2_{y}(\ub_x g)-\ub_x\vb_{yy}g-\ub\vb_{xyy}g =  h_x.
\end{aligned}
\end{align}
Take $g_{xy}\frac{\rho^2}{\z y\y^2}$ as the test function, like (\ref{g what?}),
\begin{align}\label{g x1}
\begin{aligned}
 &\z\p_{x}[\ub^2 g_{xy}]- \p^3_{y}[\ub g_x]+\vb\p^2_{y}(\ub g_x)-\ub\vb_{yy}g_x, g_{xy}\frac{\rho^2}{\z y\y^2}\y\\
=&\frac{1}{2}(\|\ub g_{xy}\frac{\rho}{\z y\y}\|^2_{x=x_0}-\|\ub g_{xy}\frac{\rho}{\z y\y}\|^2_{x=0})+\|\sqrt{\ub}g_{xyy}\frac{\rho}{\z y\y}\|^2+\z\ub_yg_{xy},g_{xy}\frac{\rho}{\z y\y}\y_{y=0}+O\big(L^\frac{1}{3}\|g\|_{\Xi_1}^2\big).
\end{aligned}
\end{align}
While
\begin{align}\label{g x2}
\begin{aligned}
  &\z\p_{x}[2\ub\ub_x g_{y}]+\vb_x\p^2_{y}(\ub g)+\vb\p^2_{y}(\ub_x g)-\ub_x\vb_{yy}g-\ub\vb_{xyy}g,g_{xy}\frac{\rho^2}{\z y\y^2}\y\\
 =&O\big(\|\sqrt{\ub} g_{yy}\rho\|^2+\|g_y\rho\|^2+\|g_{xy}\frac{\rho}{\z y\y}\|^2\big)\\
 =&O\big(\|g\|_{\Xi_0}^2+L^\frac{1}{3}\|g\|_{\Xi_1}^2\big).
\end{aligned}
\end{align}
The difficult term is
\begin{align}\label{g dif}
\begin{aligned}
\z- \p^3_{y}[\ub_x g],g_{xy}\frac{\rho^2}{\z y\y^2}\y=&-\z\ub_xg_{yyy}+3\ub_{xy}g_{yy},g_{xy}\frac{\rho^2}{\z y\y^2}\y+O\big(\|g_y\rho\|^2+\|g_{xy}\frac{\rho}{\z y\y}\|^2\big)\\
                                                     =&O\big((\|\ub g_{yyy}\frac{\rho}{\z y\y}\|+\|g_{yy}\frac{\rho}{\z y\y}\|)\|g_{xy}\frac{\rho}{\z y\y}\|+\|g_y\rho\|^2+\|g_{xy}\frac{\rho}{\z y\y}\|^2 \big).
\end{aligned}
\end{align}
From equation (\ref{phi}), we have
\[
\begin{split}
\|\phi_{yyy}\frac{\rho}{\z y\y}\|^2=O\big(\|g\|_{\Xi_0}^2+L^\frac{1}{3}\|g\|_{\Xi_1}^2+\|h\frac{\rho}{\z y\y}\|^2\big),
\end{split}
\]
notice that the fact
\[
\begin{split}
\|\p^2_y(\frac{\phi}{y})\|_{L^2_{loc}}=O\big(\|\phi_{yyy}\|_{L^2_{loc}}+\|\phi_{yy}\|_{L^2_{loc}}\big),
\end{split}
\]
similarly, we can get
\[
\begin{split}
\|g_{yy}\|_{L^2_{loc}}=O\big(\|\phi_{yyy}\|_{L^2_{loc}}+\|\phi_{yy}\|_{L^2_{loc}}\big),
\end{split}
\]
so we have
\[
\begin{split}
\|g_{yy}\frac{\rho}{\z y\y}\|^2=&\|g_{yy}\chi\|^2+\|g_{yy}(1-\chi)\frac{\rho}{\z y\y}\|^2\\
            =&O\big(\|\phi_{yyy}\|^2_{L^2_{loc}}+\|\phi_{yy}\|^2_{L^2_{loc}}+\|\sqrt{\ub}g_{yy}\frac{\rho}{\z y\y}\|^2\big),\\
            =&O\big(\|g\|_{\Xi_0}^2+L^\frac{1}{3}\|g\|_{\Xi_1}^2+\|h\frac{\rho}{\z y\y}\|^2\big),
\end{split}
\]
and
\[
\begin{split}
\|\ub g_{yyy}\frac{\rho}{\z y\y}\|^2=&\|(\phi_{yyy}-3\ub_y g_{yy}-3\ub_{yy}g_{y}-\ub_{yyy}g)\frac{\rho}{\z y\y}\|^2\\
                                    =&O\big(\|g\|_{\Xi_0}^2+L^\frac{1}{3}\|g\|_{\Xi_1}^2+\|h\frac{\rho}{\z y\y}\|^2\big).
\end{split}
\]
We conclude (\ref{g dif}) as
\begin{align}\label{g x3}
\begin{aligned}
\z- \p^3_{y}[\ub_x g],g_{xy}\frac{\rho^2}{\z y\y^2}\y=O\big(\|g\|_{\Xi_0}^2+L^\frac{1}{3}\|g\|_{\Xi_1}^2+\|h\frac{\rho}{\z y\y}\|^2\big).
\end{aligned}
\end{align}
Collect (\ref{g x1}), (\ref{g x2}), (\ref{g x3}), we have
\begin{align}\label{gx what?}
\begin{aligned}
&\frac{1}{2}\|\ub g_{xy}\frac{\rho}{\z y\y}\|^2_{x=x_0}+\|\sqrt{\ub}g_{xyy}\frac{\rho}{\z y\y}\|^2+\z\ub_yg_{xy},g_{xy}\frac{\rho}{\z y\y}\y_{y=0}\\
=&\frac{1}{2}\|\ub g_{xy}\frac{\rho}{\z y\y}\|^2_{x=0}+O\big(\|g\|_{\Xi_0}^2+L^\frac{1}{3}\|g\|_{\Xi_1}^2+\|h_x\frac{\rho}{\z y\y}\|^2+\|h\frac{\rho}{\z y\y}\|^2\big).
\end{aligned}
\end{align}
So we finish the proof of (\ref{g est2}).
\end{proof}

\noindent \textbf{Acknowledgements:} L. Zhang is partially supported by NSFC under grant
11471320 and 11631008.

\bibliographystyle{springer}
\bibliography{mrabbrev,literatur}
\newcommand{\noopsort}[1]{} \newcommand{\printfirst}[2]{#1}
\newcommand{\singleletter}[1]{#1} \newcommand{\switchargs}[2]{#2#1}

\end{document}